\def\csname ver@subfig.sty\endcsname{}
\DeclareMathAlphabet\mathbfcal{OMS}{cmsy}{b}{n}
\theoremstyle{plain}\newtheorem{theorem}{Theorem}[section]
\theoremstyle{plain}\newtheorem{lemma}{Lemma}[section]
\newtheorem{pro}{Proposition}
\newtheorem{remark}{Remark}
\definecolor{marin}{rgb} {0., 0.3, 0.7}
\definecolor{rouge}{rgb} {0.8, 0., 0.}
\definecolor{sepia}{rgb} {0.8, 0.5, 0.}
\theoremstyle{definition}
\DeclareSymbolFont{largesymbol}{OMX}{yhex}{m}{n}
\DeclareMathAccent{\Widehat}{\mathord}{largesymbol}{"62}
\begin{document}
	
	\title{\textbf{Exponential DG methods for Vlasov equations}}
	\date{}
	\author[1]{\small \textbf{Nicolas Crouseilles}} 
	\author[2]{\small \textbf{Xue Hong}} 
	\affil[1] {Universit\'e de Rennes, Inria Rennes (Mingus team) and IRMAR UMR CNRS 6625, F-35042 Rennes, France \& ENS Rennes}
	\affil[2]{Universit\'e de Rennes, Inria Rennes (Mingus team) and IRMAR UMR CNRS 6625, F-35042 Rennes, France}
	
	\maketitle
	\begin{abstract}
	In this work, an exponential Discontinuous Galerkin (DG) method is proposed to solve numerically Vlasov type equations. The DG method is used for space discretization which is combined exponential Lawson Runge-Kutta method for time discretization to get high order accuracy in time and space. In addition to get high order accuracy in time, the use of Lawson methods enables to overcome the stringent condition on the time step induced by the linear part of the system. Moreover, it can be proved that a discrete Poisson equation is preserved. Numerical results on Vlasov-Poisson and  Vlasov Maxwell equations are presented to illustrate the good behavior of the exponential DG method. 
	\end{abstract}
	\setcounter{tocdepth}{1} 
	
	\tableofcontents
	\section{Introduction} 
In this work, we are interested in the numerical simulation of Vlasov type equations using Eulerian based methods. Numerical approximation of Vlasov equations has been the subject of a lot works since these models are widely used to describe the dynamics of charged particles in a plasma through a distribution function $f(t, x, v)$ with the time $t\geq 0$, $x$ the spatial variables and $v$ the velocity variable. Hence, there have been a lot of numerical methods that have been proposed to numerically solve Vlasov equations. The so-called  
Particle-In-Cell(PIC) methods \cite{verboncoeur2005particle,kraus2017gempic} in which the unknown 
is approximated by a sum of Dirac masses with a position and velocity (macro-particles) that solves a differential system. Even if these methods are  
 efficient in high dimensions since only a spatial grid is required, they however suffer from some numerical noise which make them hardly get  an accurate approximation. Indeed, the error slowly decreases when the number of macro-particles increases, which turns out to be 
 a drawback in low density plasma region. On the other side, another family of methods have been developed which uses a grid of the phase space $(x,v)$ like spectral methods \cite{klimas1994splitting,filbet2003comparison}  or finite differences/volumes methods \cite{banks2019high,banks2010new}. These  methods enable to get high order accurate approximation and as such,  can capture fine physical phenomena like Landau damping or filamentation in Vlasov equations. 
 
 However, due to the phase-space grid, these methods are quite costly both in terms of memory and CPU point of view, in particular 
 when high dimensions are considered. Moreover, their stability is controlled by the so-called CFL condition which imposes a constraint on the time step depending of the phase space mesh refinement, which makes them very costly in practice. To overcome this drawback, 
 semi-Lagrangian have been developed \cite{qiu2010conservative,rossmanith2011positivity,einkemmer2019performance} or arbitrary Lagrangian-Eulerian methods  \cite{celia1990eulerian,wang1999family,cai2021eulerian,hong2022generalized}, which allow extra large time stepping sizes with stability by tracking solutions along their characteristics. For high dimensional problems, since (high order) interpolation techniques are required 
 which leads to huge computational cost. Moreover, reaching high order accuracy in time are quite complicated. One possibility is to use 
 splitting methods which enable to deal with simple subproblems that can even be solved exactly. However, the number of stages required to 
 get high order in time become prohibitive (see \cite{crestetto2022comparison}). 
 
For Vlasov type equations, the linear part
induces the most stringent CFL condition since the electromagnetic fields (which induce the nonlinear part) are typically one order of magnitude smaller than the one of the linear advection part. Based on this observation, exponential time integrator have been proposed 
in which the linear part is solved exactly, and as such do not suffer from the stability condition induced by the linear part, 
whereas the nonlinear part 
is solved explicitely. These methods are very popular in a number of applications (\cite{hochbruck2010exponential} and references therein)
and enables to derive easily high order methods in time since they are 
often based on a high order Runge-Kutta method. 
Regarding the use of such time integrators for Vlasov equations, 
we can quote \cite{crouseilles2020exponential, crestetto2022comparison, boutin2022modified}, but these works are based on Fourier techniques 
in space to approximate the linear part, and despite its simplicity  
and its spectral accuracy, Fourier methods are quite limited in terms 
of applications (cartesian domains) 
and suffer from Gibbs phenomenon when non periodic boundary conditions 
are considered. 

In the present work, we focus on exponential type method combined with Discontinuous-Galerkin (DG) method in space to approximate Vlasov type equations. The DG method is a class of finite element methods, in which the approximation space contains completely discontinuous, piecewise polynomials or other basis functions. High order accuracy 
can be obtained and complex geometries with boundary conditions 
can be handled (see DG review article \cite{cockburn2001runge}) 
which is important for physically relevant problems; moreover, one element only communicates with its immediate 
neighbors which is very important for parallelization capability \cite{biswas1994parallel} but also, thanks to this local data structure, the matrices used in these methods are sparse 
which is an important point if one wants to combine DG methods in space 
with exponential methods in time. 

Then, after a finite differences approximation of the Vlasov equation, 
a DG method is employed for the space approximation using central fluxes. Indeed, this choice is motivated by the fact that the DG matrix 
has a pure imaginary spectrum which is not the case when monotone fluxes are considered. 
Moreover, using central fluxes makes the treatment more easier compared to upwind flux. Indeed, the latter requires to split the flux of the linear part into two parts according to the sign of velocity $v$, 
which prevents a discrete Poisson equation from being satisfied because 
of a lack consistency in the nonlinear part. 
Once the semi-discrete equations is obtained, an ODE system has to be solved in time. To do so, exponential time integrators are used to overcome the stringent condition coming from the linear part, but the exponential of a large matrix has to be computed. Thanks to a one side coupling between the distribution function and the electromagnetic fields, it is possible to 
compute explicitely the exponential of the matrix and to derive an efficient numerical scheme which is high order in time, space and velocity, preserves the total mass and a discrete Poisson equation. Some numerical illustrates the good behavior of the method. In particular, the expected order are recovered on a two-dimensional 
linear advection and a good agreement is obtained when 
we compare the DG exponential method to the Fourier exponential  
method. 



The rest of the paper is organized as follows. In Section \ref{subsection:DG}, we present the exponential DG method for one-dimensional (1D) linear transport problems. Section \ref{sec:exponentialDGforVlasov} is dedicated to the construction of the exponential DG method for Vlasov equations includes Vlasov-Amp\`ere (1dx-1dv) and Vlasov Maxwell (1dx-2dv) equations. 
In Section \ref{sec:DG2Dx}, we discuss some extensions of the 
exponential method to high dimensional Vlasov-Maxwell equations. 
In Section \ref{sec:numerialtests}, the capability of the proposed exponential DG method is illustrated through several numerical tests. Finally, after some concluding remarks, several appendices 
details some specific aspects of the method. 
 
	\section{Exponential DG method for 1D transport equation}
 \label{subsection:DG}
	
	We firstly consider the 1D transport equation:
	\begin{equation}
 \label{linear_transport}
 \left\{
 \begin{aligned}
     &u_t+au_x = 0, \quad x\in[x_a, x_b],\\
     &u(0,x)=u_0(x).
 \end{aligned}
 \right.
	\end{equation}
	For simplicity, we assume periodic boundary conditions, and the velocity field $a$ is a constant. Here we take $a$ as 1 for simplicity. We perform a partition of the computational domain $x_a=x_{\frac12}< x_{\frac32}<\cdots< x_{N+\frac12} =x_b$ as the mesh partition.
	Let $I_j=[x_{j-\frac12}, x_{j+\frac12} ]$ denote an element of length $\Delta x_j=x_{j+\frac12}-x_{j-\frac12}$ and define $\Delta x=\max_{j}\Delta x_j.$ For simplicity, we consider the uniform mesh in this paper with $\Delta x_j=\Delta x=(x_b-x_a)/N$.
	We define the finite dimensional approximation space, $V_h^k = \{ v_h:  v_h|_{I_j} \in P^k(I_j) \}$, where $P^k(I_j)$ denotes the set of polynomials of degree at most $k$ on $I_j$. 
 For any $\psi \in V_h^k$, we also denote the left limit of $\psi$ at cell boundary as $\psi^-$ and the right limit as $\psi^+$.
	Multiply \eqref{linear_transport} by the test function $\psi \in V_h^k$, integrate on cell $I_j$ and integrate by parts, we end up with the semi-discrete DG scheme: 
 find $u_h \in V_h^k$ such that
	\begin{equation}
 \label{eq:semi-discreteDGschemefortransport}
		\int_{{I}_j}(\partial_t u_h\psi)dx =-  \hat{F}|_{{x}_{j+\frac12} }  \psi^-|_{{x}_{j+\frac12} } +   \hat{F}|_{{x}_{j-\frac12} } \psi^+|_{{x}_{j-\frac12} }   + \int_{{I}_j}F\psi_xdx, \ j=1,...,N,
	\end{equation}
	where $F(u) \doteq  u$ and $\hat{F}$ is chosen as either a central or upwind fluxes
 \begin{equation}
 \label{choice_fluxes}
 \mbox{central flux: } \; \hat{F}|_{{x}_{j\pm \frac12} }=\frac{ u^- +u^+}{2}|_{{x}_{j\pm \frac12}}, \;\;  \mbox{upwind flux: } \; \hat{F}|_{{x}_{j\pm \frac12} }= u^-|_{{x}_{j\pm \frac12}},
 \end{equation}
%
Next, we consider $\xi_j^m \; (m=0,1,\dots, k)$ a basis of $P^k(I_j)$ 
and we choose a modal basis defined as $\xi^m_j(x)=((x-x_j)/\Delta x)^m$ so that we have the representation $u_h(t,x)|_{I_j}=\sum_{m=0}^k u_j^m(t)\xi_j^m(x)$, with $u_j^m(t)$ the degree of freedom. The semi-discrete DG scheme can eventually be written as an ordinary differential equation (ODE) satisfied by the DG degrees of freedom $u^m_j(t)$ for $m=0, \dots, k$ and $j=1, \dots, N$. 
Introducing the vector $\mathbf{u}(t)\in\mathbb{R}^{(k+1)N}$ 
\begin{equation}
\label{repres_u}
\mathbf{u}(t)=(u_1^0, u_1^1, \dots,u_1^k, \; u_2^0, u_2^1, \dots,u_2^k,\dots,u_N^0, u_N^1, \dots,u_N^k)^T(t), 
\end{equation}
the semi-discretized problem simply becomes  
\begin{equation}
\label{semi_discrete}
\frac{d \mathbf{u}}{dt}  =A\mathbf{u}, \mbox{ with } A\in \mathbb{M}_{{(k+1)N,(k+1)N}}(\mathbb{R}).    
\end{equation}
The 'DG-matrix' $A$ contains the DG approximation 
\eqref{eq:semi-discreteDGschemefortransport} for which the details are given in Appendix \ref{appendixB1}. From this semi-discrete in space formulation, a 
Runge-Kutta discretization is classically used to get high order accuracy in time \cite{cockburn2001runge,zhang2004error,zhang2010stability}. But it seems also natural to use an exponential method for time discretization, which turns out to be exact in this simple linear transport case. Denoting $\mathbf{u}^n\approx \mathbf{u}(t^n)$ the fully 
discretized unknown (with $t^n=n\Delta t, n\in \mathbb{N}$ and $\Delta t>0$ the time step), the exponential-DG scheme thus writes 
\begin{equation}
\label{exp-DG-linear}
\mathbf{u}^{n+1} = \exp(A\Delta t)\mathbf{u}^n, \;\;\; \forall n\in \mathbb{N}, 
\end{equation} 
with  $\mathbf{u}^0=\mathbf{u}_0$ 
($\mathbf{u}_0$ being the degrees of freedom of the initial condition $u_0$ in \eqref{linear_transport}). 
Note that from $\mathbf{u}^n\in \mathbb{R}^{(k+1)N}$ (whose components are denoted by $(u_j^m)^n, j=1, \dots, N$ and $m=0, \dots, k$, following \eqref{repres_u}), it is possible to reconstruct a piecewise polynomial function $u^n_h\in V_h^k$ from 
\begin{equation}
\label{DG-reconstruct}
u_h^n(x) = \sum_{m=0}^k (u_j^m)^n \xi_j^m(x), \;\; \forall x\in I_j, \;\; j=1, \dots, N.  
\end{equation}


The properties of such an approximation obviously depends  on the structure of the DG-matrix $A$ which is discussed now. From the calculations (given in Appendix \ref{appendixB1}), the DG-matrix $A$ obtained with the central flux and periodic boundary conditions enjoys a circulant tri-diagonal block structure so that it can be written as:
\begin{equation}
\label{dg_matrix}
A=\frac{1}{\Delta x}\left(\begin{array}{lllll}
	C_1& C_2 & \bf{0} & \hdots &C_3 \\ 
	C_3 & C_1& C_2 & \bf{0}&\hdots  \\
    \bf{0}& \ddots & \ddots & \ddots &C_2 \\
    C_2 & \bf{0}& \hdots & C_3 & C_1
\end{array}
\right), \;\; \mbox{ with } C_j=M^{-1}D_j \; (j=1,2,3)
\end{equation}
where the matrix elements of the matrices  $M, D_j \in \mathbb{M}_{{(k+1),(k+1)}}(\mathbb{R})$ are given by (for $\ell, m=1, \dots, k+1$, see Appendix \ref{appendixB1} for details) 
\begin{eqnarray} 
M_{\ell,m}=\frac{(1/2)^{m+\ell-1}}{m+\ell-1}[1-(-1)^{m+\ell-1}], \!\!\!\!\!\! && (D_2)_{\ell,m}= (-1)^{m}(1/2)^{m+\ell-1},\nonumber\\
\label{expression_MD}
(D_1)_{\ell,m}=(1/2)^{m+\ell-2}\Big(\frac{\ell-1}{m+\ell-2}-\frac{1}{2}\Big)[1-(-1)^{m+\ell-2}], \!\!\!\!\!\!  &&   (D_3)_{\ell,m}=(-1)^{\ell-1}(1/2)^{m+\ell-1},  
\end{eqnarray}
with $(D_1)_{1,1}=0$ by convention. 
The choice of central fluxes implies the matrix $A$ is diagonalizable and the eigenvalues are pure imaginary. This has been checked numerically and some discussions are performed in the following remarks.

\begin{remark}
In \cite{tee2007eigenvectors}, the author   proposes a way to deduce the eigenvalues of $A\in \mathbb{M}_{{(k+1)N,(k+1)N}}(\mathbb{R})$ from the eigenvalues of some matrices of size $(k+1)$, which can be computed explicitly for small values of $k$ (numerically for larger $k$). Considering $(\rho_j)_{j=0, \dots, N-1}$ the $N$-th roots of the unity  ($\rho_j^N=1$ for $j=0, \dots, N-1$), the $(k+1)N$ eigenvalues of $A$ given by \eqref{dg_matrix} can be deduced from the $(k+1)$ eigenvalues of ${\cal C}_j=C_1+\rho_j C_2 + \rho_j^{N-1} C_3$ for $j=0, \dots, N-1$, where ${\cal C}_j\in\mathbb{M}_{{(k+1),(k+1)}}$. 
Then, we checked numerically that the eigenvalues of ${\cal C}_j$ are pure imaginary for all $j=0, \dots, N-1$, and we deduce from \cite{tee2007eigenvectors} 
that it is also true for the eigenvalues  of $A$. 
\end{remark}

\begin{remark}
\label{remark_eigenA}
We explore another way to check the eigenvalues of $A$ given by \eqref{dg_matrix} are pure imaginary by using symbolic software. Denoting $P_A(\lambda)$ the characteristic polynomial of $A$, we made the following observations   
\begin{itemize}
\item odd case: $(k+1)N=2d+1$.  In this case, we have $P_A(\lambda)= \lambda \sum_{\ell=0}^d a_{2\ell}\lambda^{2\ell}$ with $a_{2\ell}\in \mathbb{R}$ and the roots can be written as $\left\{0, \lambda_j, \bar{\lambda}_j, j=1, \dots, d\right\}$, in particular $0$ is a simple eigenvalue in this case. Since $P_A(-\lambda)=P_A(\lambda)$, we deduce  Re$(\lambda_j)=0$. 
\item even case: $(k+1)N=2d$.  In this case, we have $P(\lambda)= \lambda^2 \sum_{\ell=0}^{d-1} a_{2\ell}\lambda^{2\ell}$ with $a_{2\ell}\in \mathbb{R}$ and the  roots can be written as $\left\{0, \lambda_j, \bar{\lambda}_j, j=1, \dots, d-1\right\}$, in particular $0$ is a double  eigenvalue in this case. Since $P_A(\lambda)=P_A(-\lambda)$, we deduce Re$(\lambda_j)=0$. 
\end{itemize}
\end{remark}




We can now study the stability  of the numerical scheme. To do so, we write the following proposition.  
\begin{pro}
Let consider the matrix $A\in \mathbb{M}_{(k+1)N,(k+1)N}(\mathbb{R})$ given by \eqref{dg_matrix}-\eqref{expression_MD} There exists $C>0$ such that, for any time $t$ and any $k, N$, we have $\|\exp(At)\| \leq C$, with $\|\cdot \|$ an induced matrix norm.   
\begin{proof}
First, we write $A=\Delta x^{-1} A_1$ (with $\Delta x=(x_b-x_a)/N$) and since $A$ is diagonalizable, there exist $P$ invertible and $D$ diagonal such that $A=\Delta x^{-1}P D P^{-1}$. Let us remark from \eqref{dg_matrix}-\eqref{expression_MD} that $A_1$ does not depend on the space mesh $\Delta x$ (and then dos not depend on $N$), so does the matrix $P$. Thus, there exists $C>0$ (independent of $N$) such that  cond$(P)\equiv\|P\|\|P^{-1}\|\leq C$, where $\|\cdot \|$ denotes an induced matrix norm. 
Now, since the eigenvalues of $A$ are pure imaginary for all $k,N$, we have $D_{j,j} = i\lambda_j, \lambda_j\in \mathbb{R},  j=1,2,\dots,(k+1)N$. Finally, 
we get for all $t$ 
$$
 \|\exp(At)\|=\|P\exp(\Delta x^{-1}D)P^{-1}\| \leq \mbox{cond}(P) \; \|\exp(\Delta x^{-1}D)\| = \mbox{cond}(P) \leq C. 
$$
\end{proof}
\end{pro}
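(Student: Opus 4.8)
\medskip
\noindent\textbf{Proof proposal.} The plan is to reduce the uniform bound to a bound on the condition number of a diagonalizing matrix, and then to make that condition number explicit through the block-circulant structure of $A$. I work throughout in the spectral ($2$-)norm, which is the natural induced norm here.

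\emph{Step 1: reduction to the eigenvector conditioning.} Since the central flux makes $A$ diagonalizable with purely imaginary spectrum (as recorded above), I first factor out the mesh size, $A = \Delta x^{-1} A_1$ with $A_1 = \Delta x\, A$ the block-circulant matrix assembled from $C_1, C_2, C_3$ in \eqref{dg_matrix}, and diagonalize $A_1 = P D P^{-1}$ with $D = \mathrm{diag}(i\mu_1, \dots, i\mu_{(k+1)N})$ and $\mu_\ell \in \mathbb{R}$. Then $\exp(At) = P \exp(\Delta x^{-1} t D) P^{-1}$, and because $\exp(\Delta x^{-1} t D)$ is diagonal with unit-modulus entries $e^{\,i \Delta x^{-1} t \mu_\ell}$ its spectral norm equals $1$ for every $t$. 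Submultiplicativity then gives $\|\exp(At)\|_2 \le \mathrm{cond}_2(P)$, uniformly in $t$, so the whole problem is to bound $\mathrm{cond}_2(P)$ independently of $t$ and $N$.

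\emph{Step 2: factoring $P$ by the roots-of-unity transform.} Rather than arguing directly that the diagonalizer is $N$-independent, I would exploit the circulant block structure explicitly. Let $F \in \mathbb{M}_{N,N}(\mathbb{C})$ be the unitary DFT matrix built from the roots of unity $\rho_j$. A standard property of block-circulant matrices is that $(F^\ast \otimes I_{k+1})\, A_1\, (F \otimes I_{k+1})$ is block diagonal, with diagonal blocks precisely the matrices $\mathcal{C}_j = C_1 + \rho_j C_2 + \rho_j^{N-1} C_3$ of the first remark. Diagonalizing each block, $\mathcal{C}_j = Q_j \Lambda_j Q_j^{-1}$, yields $P = (F \otimes I_{k+1})\, \mathrm{blockdiag}(Q_0, \dots, Q_{N-1})$. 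Since $F \otimes I_{k+1}$ is unitary it contributes trivially in the $2$-norm, and therefore $\mathrm{cond}_2(P) = \max_{0 \le j \le N-1} \mathrm{cond}_2(Q_j)$, which removes the dimension-growing ``Fourier'' part entirely.

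\emph{Step 3: uniformity in $N$, and the main obstacle.} For fixed $k$ the blocks $\mathcal{C}_j$ are the samples at $\theta_j = 2\pi j/N$ of the continuous matrix symbol $\mathcal{C}(\theta) = C_1 + e^{i\theta} C_2 + e^{-i\theta} C_3$, $\theta \in [0,2\pi]$. Provided the family $\{\mathcal{C}(\theta)\}_\theta$ is uniformly diagonalizable on the compact circle (i.e. stays bounded away from having Jordan blocks, not merely diagonalizable at the roots of unity), a compactness argument bounds $\max_j \mathrm{cond}_2(Q_j)$ by $\sup_{\theta}\mathrm{cond}_2(Q(\theta)) < \infty$ independently of $N$; this makes the $N$-independence quantitative and finishes the bound for each fixed $k$. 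The genuinely delicate issue is the claimed uniformity in the degree $k$: the modal basis $\xi_j^m(x) = ((x-x_j)/\Delta x)^m$ is the non-orthonormal monomial basis, so the local mass matrix $M$ in \eqref{expression_MD} is a Hilbert-type matrix whose condition number grows exponentially with $k$, and this ill-conditioning feeds into $P$. I therefore expect $\mathrm{cond}_2(P)$ to blow up as $k \to \infty$, and I would either read the statement as ``for each fixed $k$, uniformly in $t$ and $N$'', which the argument above fully proves, or first rewrite the scheme in an orthonormal (Legendre) basis so that $M = I$, after which the eigenvector conditioning is governed purely by the normal structure of the symbol $\mathcal{C}(\theta)$; securing a bound that is also uniform in $k$ is the step I expect to demand the most work.
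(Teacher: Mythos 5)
Your Step 1 coincides exactly with the paper's proof: factor $A=\Delta x^{-1}A_1$, diagonalize $A_1=PDP^{-1}$, note that the diagonal exponential has unit-modulus entries and hence unit norm, and reduce the whole statement to a bound $\mathrm{cond}(P)\leq C$ uniform in $N$. Where you genuinely diverge is in how that bound is obtained. The paper simply asserts that $A_1$ ``does not depend on the space mesh $\Delta x$ (and then does not depend on $N$), so does the matrix $P$'' --- an assertion that, read literally, cannot be right ($A_1$ and $P$ have size $(k+1)N$, which grows with $N$); what is actually $N$-independent is only the block data $C_1,C_2,C_3$, and the $N$-uniformity of $\mathrm{cond}(P)$ is left unproved. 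Your Steps 2--3 supply precisely this missing argument: the unitary factorization $P=(F\otimes I_{k+1})\,\mathrm{blockdiag}(Q_0,\dots,Q_{N-1})$ removes the dimension-growing Fourier part, and the compactness of the symbol $\mathcal{C}(\theta)=C_1+e^{i\theta}C_2+e^{-i\theta}C_3$ on the circle converts the root-of-unity samples into an $N$-uniform bound for fixed $k$. This is a strictly more rigorous route than the paper's, at the cost of one extra hypothesis that you honestly flag: uniform diagonalizability of $\mathcal{C}(\theta)$ over the whole circle (eigenvalue collisions, e.g.\ the double zero eigenvalue in the even case, are exactly where a continuous, well-conditioned choice of $Q(\theta)$ could fail), a point the paper does not verify either, having only checked pure-imaginary spectra numerically. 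Two minor remarks: your equality $\mathrm{cond}_2(P)=\max_j\mathrm{cond}_2(Q_j)$ should be the one-sided bound $\mathrm{cond}_2(P)\leq\bigl(\max_j\|Q_j\|_2\bigr)\bigl(\max_j\|Q_j^{-1}\|_2\bigr)$, which is all you need; and your criticism of the ``for any $k$'' clause is well taken --- with the monomial modal basis the local mass matrix $M$ is Hilbert-like and its conditioning degrades rapidly in $k$, so the constant delivered by the paper's argument (and yours) is uniform in $t$ and $N$ for each fixed $k$, which is the reading consistent with what either proof actually establishes.
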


We end this section by proving  an error estimate for exponential DG method.
\begin{pro}
Let $u(t,x)$ the exact solution of 1D transport problem \eqref{linear_transport} with a smooth initial condition $u_0$  and let $u_h^n\in V_h^k$ the numerical solution, $n=0, \dots, N$ (with $N=T/\Delta t$, $T$ being the final time and $\Delta t$ the time step) obtained from \eqref{exp-DG-linear}-\eqref{DG-reconstruct} where $A$ is the DG-matrix given by \eqref{dg_matrix}-\eqref{expression_MD}. Then we have the following $L^2$-norm error estimate:
$$
||u(t^n,\cdot)-u_h^n||_{L^2}\leq C \Delta x^{k}, \;\;\; 
$$

\begin{proof}
First, we introduce $u_h(t,x)$  the exact solution of the semi-discrete DG scheme \eqref{semi_discrete}. The classical DG projection analysis gives $||u(t^n,\cdot)-u_h(t^n,\cdot)||_{L^2}\leq C \Delta x^{k},$ where $C$ is a positive constant independent on $\Delta x$ (see the details in \cite{richter1988optimal,lesaint1974finite,johnson1986analysis,peterson1991note,zhang2004error,zhang2010stability}, but a simple proof of stability and error estimate is given in Appendix \ref{appendixA}).  Since the exponential method exactly solves the semi-discrete DG scheme, we have $||u_h(t^n,\cdot)-u_h^n||_{L^2}=0$. Finally we have $$||u(t^n,\cdot)-u_h^n||_{L^2}\leq||u(t^n,\cdot)-u_h(t^n,\cdot)||_{L^2}+||u_h(t^n,\cdot)-u_h^n||_{L^2}\leq C \Delta x^{k}.$$ 
\end{proof}
\end{pro}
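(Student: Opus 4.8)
The plan is to decouple the temporal and spatial errors by inserting the semi-discrete-in-space solution as an intermediate quantity, exploiting the fact that the exponential integrator is exact in time for the linear problem. Concretely, I would introduce $u_h(t,\cdot)\in V_h^k$, the exact (continuous-in-time) solution of the semi-discrete DG system \eqref{semi_discrete}, whose degrees of freedom are $\mathbf{u}(t)=\exp(At)\mathbf{u}^0$ and which is reconstructed at $t^n$ via \eqref{DG-reconstruct}. The triangle inequality then gives
$$
\|u(t^n,\cdot)-u_h^n\|_{L^2}\le \|u(t^n,\cdot)-u_h(t^n,\cdot)\|_{L^2}+\|u_h(t^n,\cdot)-u_h^n\|_{L^2},
$$
and the task reduces to controlling the two pieces separately.

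First I would dispose of the second (temporal) term. Since \eqref{exp-DG-linear} iterates $\mathbf{u}^{n+1}=\exp(A\Delta t)\mathbf{u}^n$, a straightforward induction together with the semigroup identity $\big(\exp(A\Delta t)\big)^n=\exp(A t^n)$ shows $\mathbf{u}^n=\exp(At^n)\mathbf{u}^0=\mathbf{u}(t^n)$; hence the fully discrete degrees of freedom coincide exactly with the semi-discrete ones and, by the reconstruction \eqref{DG-reconstruct}, $\|u_h(t^n,\cdot)-u_h^n\|_{L^2}=0$. This step is immediate and is precisely the point of using an exponential integrator on the purely linear transport equation: it contributes no error whatsoever.

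The substantive part is the first (spatial) term, the classical semi-discrete DG estimate $\|u(t^n,\cdot)-u_h(t^n,\cdot)\|_{L^2}\le C\Delta x^{k}$. The standard energy-method route would be to split the error as $u-u_h=(u-\mathcal P u)-(u_h-\mathcal P u)=:\theta-\eta$, where $\mathcal P$ is a local projection onto $V_h^k$; to bound $\theta$ by approximation theory, $\|\theta(t,\cdot)\|_{L^2}\le C\Delta x^{k+1}\|u\|_{H^{k+1}}$; and to estimate $\eta\in V_h^k$ through the error equation obtained by subtracting the scheme \eqref{eq:semi-discreteDGschemefortransport} from the weak formulation satisfied by $u$ (Galerkin orthogonality). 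With central fluxes the spatial operator is skew-symmetric in the $L^2$ inner product, which is exactly the structural fact behind the pure-imaginary spectrum of $A$ noted earlier, so testing the error equation against $\eta$ makes the volume and interior contributions cancel and leaves an energy identity for $\tfrac12\tfrac{d}{dt}\|\eta\|_{L^2}^2$ driven only by interface jump terms involving $\theta$. A Gronwall argument over $[0,t^n]$ then closes the estimate.

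The main obstacle is the suboptimal order intrinsic to central fluxes. For upwind fluxes one chooses a Gauss--Radau projection that annihilates the relevant boundary contributions of $\theta$ and recovers the optimal rate $\Delta x^{k+1}$; with central fluxes no single projection kills all interface terms simultaneously, so the residual jump contributions are only $O(\Delta x^{k})$ and one obtains the stated order $k$ rather than $k+1$. I would therefore need to track carefully which projection is used and why the central-flux jump terms resist the usual superconvergence cancellation. Since this is by now standard DG theory, I would invoke the cited references \cite{richter1988optimal,lesaint1974finite,johnson1986analysis,peterson1991note,zhang2004error,zhang2010stability} (or the self-contained argument of Appendix \ref{appendixA}) for the technical details, the essential new observation being simply that the exponential time integrator adds zero error, so that the full scheme inherits exactly the spatial DG rate.
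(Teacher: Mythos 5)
Your proposal is correct and follows essentially the same route as the paper: insert the semi-discrete solution $u_h(t,\cdot)$, note that the exponential integrator reproduces it exactly so the temporal term vanishes, and conclude with the classical central-flux DG estimate $C\Delta x^{k}$ via the triangle inequality. Your additional sketch of the energy argument (projection splitting, skew-symmetry of the central-flux operator, Gronwall) matches the self-contained proof the paper provides in its appendix, so nothing substantive differs.
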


\begin{remark}
It is worthy to be mentioned that the $(k+1)$th order optimal convergence rate have been proved for DG with monotone flux (see \cite{johnson1986analysis,peterson1991note,zhang2004error,zhang2010stability}). However, $k$th order sub-optimal convergence rate is proved for DG with central flux in \cite{liu_shu_DG_convergence} and a discussion is performed 
according to the oddness of $k$. 
\end{remark}
 
\section{Semi-discretization of some Vlasov models with Discontinuous Galerkin  method}\label{sec:exponentialDGforVlasov}
In this section, we consider the numerical approximation of Vlasov-Maxwell equations using DG framework in space (as presented in the previous section) and finite differences in the velocity direction. The semi-discretization (in both space and velocity) is presented and we will see that the so-obtained ODE system is amenable to Lawson time integrators.  
 
 We first present the methodology on the $1d_x-1d_v$ case on the Vlasov-Amp\`ere system and then we consider the $1d_x-2d_v$ case on the Vlasov-Maxwell system.  
 
	\subsection{Vlasov-Amp\`ere equation}
 \label{DG_VA}
	The equation we address is the following Vlasov-Amp\`ere model satisfied by the distribution function $f(t, {x}, {v})\geq 0$ and the electric field $E(t, x)\in \mathbb{R}$ 
 with $t\geq 0, x\in [0, L] \; (L>0)$ and $v\in \mathbb{R}$,
	\begin{equation}
		\label{eq:modelVA}
		\left\{
		\begin{aligned}
			&\frac{\partial f}{\partial t} + {v}  \frac{\partial f}{\partial { x}} +E  \frac{\partial f}{\partial { v}}= 0, \\
			&\partial_t E=-\int_ {\mathbb{R}} vf \mathrm{d}v+\bar{J}, \;\;\;\;  \bar{J}=\frac{1}{L}\int_0^L \int_{\mathbb{R}} v f dx dv,  \\
		\end{aligned}
		\right.
	\end{equation} 
 with the initial conditions $(f_0(x, v), E_0(x))$ such that the Poisson equation is satisfied initially $\partial_x E_0 = \int_{\mathbb{R}} f_0dv - \bar{\rho}$, with $\bar{\rho} =(1/L) \int_0^L \int_{\mathbb{R}} f_0 dv dx$ and periodic boundary conditions are imposed in space. The Vlasov-Amp\`ere system is equivalent the Vlasov-Poisson model where the electric field satisfies the Poisson equation $\partial_x E= \int_{\mathbb{R}} f {d}{ v}-\bar{\rho}$.

 \subsubsection{Semi-discretization}
We shall use a DG method in the space direction $x$ as presented in the previous section and we consider a truncated domain $[-v_{ \max}, v_{ \max}]$ in the velocity direction discretized by $v_{j} = -v_{ \max} + j \Delta v, j=0, \dots, N_v$, $\Delta v=2 v_{ \max}/N_v$ being the velocity mesh step.  We firstly present DG discretization for ${E}$ in the $x$-direction with $I_i=[x_{i-\frac12}, x_{i+\frac12} ],\ i=1,...,N_x$ ($N_x$ being the number of cells):
$$
E(t,x) \approx E_h(t,x)=\sum_{i=1}^{N_x} E_h(t, x)|_{I_i}= \sum_{i=1}^{N_x} \sum_{m=0}^{k}{E}_{i}^m(t) \xi^m(x). 
$$
Then, considering finite difference method for $f$ in $v$ direction, we consider the DG approximation in $x$-direction through  
$$
f(t,x,v_j)\approx f_h(t,x,v_j)= \sum_{i=1}^{N_x} \sum_{m=0}^{k}{f}_{i}^m(t,v_j) \xi^m(x),\ j=0,...,N_v.
$$
As in the previous section, we denote $\mathbf{f}_{j}(t)\in \mathbb{R}^{(k+1)N_x}$ the vector of the DG coefficients $f^m_i(t, v_j)$ of $f_h(t, x, v_j)$ using DG in space and evaluated at the velocity grid $v_j$ whereas $\mathbf{E}\in \mathbb{R}^{(k+1)N_x}$ denotes the vector of DG  coefficients $E^m_i$ of  $E_h(t, x)$.

For Vlasov-Amp\`ere equation \eqref{eq:modelVA}, we have the following DG scheme with the DG representation of $E$ and $f$:
\begin{equation}
\begin{aligned}
&\sum_{m=0}^{k}\left[\Big(\partial_t{f}_{i}^m(t,v_j) \xi^m, \xi^\ell\Big)_{I_i}-v_j\Big({f}_{i}^m(t,v_j) \xi^m,\partial_x\xi^\ell(x)\Big)_{I_i}\right]+
 v_j\left[\{f_h(t, x, v_j)\} \xi^\ell \right]^{i+1/2}_{i-1/2}\\
    &+ \sum_{m=0}^{k}\Big(\sum_{n=0}^{k}E_i^n\xi^n({\mathcal{D}f_{i}^m})(t,v_j) \xi^m, \xi^\ell\Big)_{I_i}=0,
\end{aligned}
\end{equation}
where we used the central flux $\{f_h(t, x, v_j)\}|_{x_{i\pm 1/2}} 
= \frac{1}{2}(f_h(t, x_{i\pm 1/2}^+, v_j) + f_h(t, x_{i\pm 1/2}^-, v_j))$, $\ell=0,1,2,...,k,\ i=1,2,...N_x$ and  ${\mathcal{D}f}(v_j)$ denotes a discrete approximation of $(\partial_v f)(v_j)$ (an example would be $({\mathcal{D}f})(v_j)=\frac{f(v_{j+1})-f(v_{j-1})}{2\Delta v}$ but any  higher order finite difference  approximation can be used).
We denote $$\mathbf{f}_{j,i}(t)=({f}_{i}^0(t,v_j), {f}_i^1(t,v_j), ....,{f}_i^k(t,v_j))^T, \ i=1,2,...,N_x,$$ 
$$\mathbf{E}_i(t)=({E}_{i}^0(t), {E}_i^1(t), ....,{E}_i^k(t))^T, \ i=1,2,...,N_x,$$
and 
$$\mathbf{{\mathcal{D}f}}_{j,i}(t)=(({\mathcal{D}f}_{i}^0)(t,v_j), ({\mathcal{D}f}_i^1)(t,v_j), ....,({\mathcal{D}f}_i^k)(t,v_j))^T, \ i=1,2,...,N_x.$$
We can rewrite the DG discretization as an ODE system of size 
$(k+1)N_x$ for each $j=1, \dots, N_v$ 
\begin{equation}
\label{VA_dg_1}
\begin{aligned}
    \Delta x \left(\begin{array}{llll}
	M&  &  &  \\ 
	 & M&  &  \\
     &  & \ddots & \\
     &  &  & M\\
\end{array}
\right)
\frac{d}{dt}\left(\begin{array}{llll}
	\mathbf{f}_{j,1}\\ 
 \mathbf{f}_{j,2}\\ 
 \vdots\\ 
\mathbf{f}_{j,N_x}
\end{array}\right)
&-
v_j \left(\begin{array}{llll}
	D_1& D_2 & \hdots &D_3 \\ 
	D_3 & D_1& D_2 &  \\
     & \ddots & \ddots &D_2 \\
    D_2 & \hdots & D_3 & D_1\\
\end{array}
\right)
\left(\begin{array}{llll}
	\mathbf{f}_{j,1}\\ 
 \mathbf{f}_{j,2}\\ 
 \vdots\\ 
\mathbf{f}_{j,N_x}
\end{array}\right)\\
&+
\left(\begin{array}{llll}
	B_1&  &  &  \\ 
	 & B_2&  &  \\
     &  & \ddots & \\
     &  &  & B_{N_x}\\
\end{array}
\right)
\left(\begin{array}{llll}
	\mathbf{({\mathcal{D}f})}_{j,1}\\ 
 \mathbf{({\mathcal{D}f})}_{j,2}\\ 
 \vdots\\ 
\mathbf{({\mathcal{D}f})}_{j,N_x}
\end{array}\right)={\bf 0},
\end{aligned}
\end{equation}
where the matrices $M, D_1, D_2, D_3$ are the same as in the previous section (see also Appendix \ref{appendixB1}) 
and $B_i (i=1, \dots, N_x)$ are matrices of size $k+1$ with elements $(B_i)_{\ell,m}=(\sum_{n=0}^{k}E_i^n\xi^n \xi^m, \xi^\ell)_{I_i}$.

Introducing now the following vector containing the degrees of freedom of $f_h$ and $E_h$
\begin{eqnarray*}
\mathbf{f}_j(t)\!\!&\!\!=\!\!&\!\!({f}_{1}^0(t,v_j), \dots,{f}_1^k(t,v_j), {f}_2^0(t,v_j), \dots,{f}_2^k(t,v_j),\dots,{f}_{N_x}^0(t,v_j), \dots,{f}_{N_x}^k(t,v_j))^T,\nonumber\\ 
\mathbf{E}(t)\!\!&\!\!=\!\!&\!\!({E}_{1}^0(t), \dots,{E}_1^k(t), {E}_2^0(t), \dots,{E}_2^k(t),\dots,{E}_{N_x}^0(t),  \dots,{E}_{N_x}^k(t))^T,
\end{eqnarray*}
and $\mathbf{{\mathcal{D}f}}_j$ is defined similarly as $\mathbf{f}_j$, 
we can rewrite the DG scheme \eqref{VA_dg_1} as 
\begin{equation}
\label{eqn:VdiscretizationDG}
    \partial_t \mathbf{f}_{j}=v_{j}A \mathbf{f}_{j}- \tilde{\mathbf{E}} (\mathbf{{\mathcal{D}f}})_j,
\end{equation}
$A\in \mathbb{M}_{(k+1)N_x,(k+1)N_x}$ is the DG-matrix \eqref{dg_matrix} and $\tilde{\mathbf{E}}\in \mathbb{M}_{(k+1)N_x,(k+1)N_x}$ is a block diagonal matrix 
composed of $N_x$ block matrices of size $(k+1)\times (k+1)$ defined by  $(\Delta x M)^{-1}B_i, i=1, \dots, N_x$. Let remark that we consider central finite differences method to approximate  $(\mathbf{{\mathcal{D}f}})_j$ to avoid to discuss the sign of matrix $ \tilde{\mathbf{E}}\in \mathbb{M}_{(k+1)N_x,(k+1)N_x}$ compared with upwind FD method.

Let us now discuss the discretization of the Amp\`ere equation. Since our goal is to find a consistent discretization 
that is compatible with a discrete Poisson equation,  we first discuss how to solve the Poisson equation.  
Using the above discretization, we will use the DG matrix 
$A$ which is an approximation of $(-\partial_x)$. 
Thus, a direct approximation of the initial Poisson  equation $\partial_x E(0,x)= \int_{\mathbb{R}} f_0(x,v) {d}{ v}-\bar{\rho}$ would be $-A\mathbf{E}^0=\sum_{j}\mathbf{f}^0_{j}\Delta v-\bar{\rho}$ 
(with $\mathbf{E}^0$ and $\mathbf{f}^0_{j}$ the degrees of freedom of $E(0, x)$ and $f_0(x, v_j)$. However, as mentioned in Remark \ref{remark_eigenA}, 
$A\in \mathbb{M}_{(k+1)N_x,(k+1)N_x}$ is not invertible and we then introduce $\Pi\in \mathbb{M}_{(k+1)N_x,(k+1)N_x}$ the projection onto the 
Ker$(A)$ so that $(A+\Pi)$ is invertible on $R(A)$ with $R(A)$ the range of $A$. Here we impose condition $\Pi \mathbf{E}^0=0$ to preserve the uniqueness of the solution $\mathbf{E} \in \mathbb{R}^{(k+1)N_x}$, which is similarly as the constraint $\int_{\mathbb{R}} E(x,v) {d}{ x}=0$ for Poisson equation itself.
We then consider the following discretized Poisson equation 
\begin{equation}
\label{discrete_poisson_dg}
-(A+\Pi)\mathbf{E}^0=({\bf{1}}-\Pi)\Big(\sum_{j}\mathbf{f}^0_{j}\Delta v-\bar{\rho}\Big)=({\bf{1}}-\Pi)\sum_{j}\mathbf{f}^0_{j}\Delta v,
\end{equation} 
with ${\bf{1}}$ the identity matrix of size $(k+1)N_x$ and  where in the last equality, we used the fact that constants belong to Ker$(A)$  (see Appendix \ref{appendixC} for details).

We deduce the discretization of Amp\`ere equation from the time derivative of the discretized Poisson equation inspired from \eqref{discrete_poisson_dg}, that is:  
$-(A+\Pi)\mathbf{E}(t)=({\bf{1}}-\Pi)\sum_{j}\mathbf{f}_{j}(t)\Delta v$. 
Considering the time derivative 
of the latter equation and using \eqref{eqn:VdiscretizationDG} leads to 
\begin{eqnarray*}
        -(A+\Pi)\partial_t\mathbf{E}(t)&=&({\bf{1}}-\Pi)\partial_t\sum_{j}\mathbf{f}_{j}(t)\Delta v=({\bf{1}}-\Pi)\sum_{j}v_jA\mathbf{f}_{j}(t)\Delta v\\
        &=&A({\bf{1}}-\Pi)\sum_{j}v_j\mathbf{f}_{j}(t)\Delta v=(A+\Pi)({\bf{1}}-\Pi)\sum_{j}v_j\mathbf{f}_{j}(t)\Delta v, 
\end{eqnarray*}
where we used $\sum_j {\cal D} {\bf f}_j = 0$ and some 
relations between $A$ and $\Pi$. Hence, we consider the following DG discretization of the Amp\`ere equation 
\begin{equation}
\label{ampere_DG}
\partial_t \mathbf{E}(t)=-({\bf{1}}-\Pi)\sum_{j}v_j\mathbf{f}_{j}(t)\Delta v.
\end{equation}

Finally, gathering \eqref{eqn:VdiscretizationDG} and \eqref{ampere_DG} enables to get the following semi-discretized scheme for the Vlasov-Amp\`ere system 
\begin{equation}
	\label{eq:model_VlasovAmpere}
	\left\{
	\begin{aligned}
		&\partial_t \mathbf{f}_{j}=v_{j}A \mathbf{f}_{j}- \tilde{\mathbf{E}} (\mathbf{{\mathcal{D}f}})_j ,  \\
		&\partial_t \mathbf{E}=-\sum_{j} v_{ j} \mathbf{f}_{j} \Delta v + \sum_{j} v_{ j} \Pi\mathbf{f}_{j} \Delta v. 
	\end{aligned}
	\right.
\end{equation} 

In view of the time discretization, we introduce the following vector of semi-discrete unknown $U=(\vec{\mathbf{f}}, \mathbf{E})\in \mathbb{R}^{(k+1)N_x (N_v+1)}$ with 
$\vec{\mathbf{f}}=(\mathbf{f}_1, \mathbf{f}_2, \dots, \mathbf{f}_{N_v})\in \mathbb{R}^{(k+1)N_x N_v}$. Then the previous system \eqref{eq:model_VlasovAmpere} can be rewritten as 
\begin{equation}
	\label{ULU_core}
	\partial_t U =L U + N(U), 
\end{equation}
with $L\in \mathbb{M}_{(k+1)N_x(N_v+1),(k+1)N_x(N_v+1)}$ given by 
\begin{eqnarray}
 \label{L_matrix}
	L&=& \left( 
	\begin{array}{lllllllccc}
		\;\;\;\;\;\;{v_{1}}A & \;\;\;\;{\bf 0}_{\tilde{N},\tilde{N}} & {\bf 0}_{\tilde{N},\tilde{N}} \;\;\;\;\hdots &  \;\;\;\; {\bf 0}_{\tilde{N},\tilde{N}} &  {\bf 0}_{\tilde{N},\tilde{N}} \\
		\;\;\;\;\;\;{\bf 0}_{\tilde{N},\tilde{N}} & \;\;\;\;	{v_{2}}A &  {\bf 0}_{\tilde{N},\tilde{N}} \;\;\;\;\hdots & \;\;\;\;  {\bf 0}_{\tilde{N},\tilde{N}} & \vdots \\
		\;\;\;\;\;\;\vdots & \;\;\;\;\ddots & \ddots  &  \;\;\;\;\vdots  &  \vdots \\
		\;\;\;\;\;\;{\bf 0}_{\tilde{N},\tilde{N}} &\;\;\;\; \hdots  &   {\bf 0}_{\tilde{N},\tilde{N}}& \;\;\;\;{v_{N_v}}A &  {\bf 0}_{\tilde{N},\tilde{N}} \\
		\!\!\!-\Delta v v_1 ({\bf 1}-\Pi)& -\Delta v v_2 ({\bf 1}-\Pi) & \hdots  & -\Delta v v_{N_v} ({\bf 1}-\Pi) & {\bf 0}_{\tilde{N},\tilde{N}}
	\end{array} 
	\right) 
 \end{eqnarray}
 where we denote $\tilde{N}=(k+1)N_x$ and ${\bf 1}={\bf 1}_{\tilde{N},\tilde{N}}$ the identity matrix of size $\tilde{N}\times\tilde{N}$. Finally,  $U, N(U)\in \mathbb{R}^{(k+1)N_x(N_v+1)}$ are given by 
\begin{eqnarray*} 
 	U&=& \left( 
	\begin{array}{llll}
		{\mathbf{f}}_{1}\\
		{\mathbf{f}}_{2}\\
		\vdots \\
		\!\!\! {\mathbf{f}}_{N_{v}}  \!\!\!\!\\
		\mathbf{E}
	\end{array} 
	\right), \;\;\;\;\;\;\;\;\;\;\;\; \;\;\;\;\;\;\;\;\;\;\;\; \;\;\;\;\;\;\;\;\;\;\;\;  
	N(U)= 
	\left( 
	\begin{array}{llllccc}
		-\tilde{\mathbf{E}} (\mathbf{{\mathcal{D}f}})_1\\
		-\tilde{\mathbf{E}} (\mathbf{{\mathcal{D}f}})_2\\
		\;\;\;\;\vdots\\
		-\tilde{\mathbf{E}} (\mathbf{{\mathcal{D}f}})_{N_v}\\
		\;\;\;\;{\bf 0} 
	\end{array} 
	\right).  
\end{eqnarray*}

\subsubsection{Time discretisation} 
The goal of this part is to present time discretization of \eqref{ULU_core} to get a fully discretized scheme of the Vlasov-Amp\`ere system \eqref{eq:modelVA}. 
The form \eqref{ULU_core} is amenable to exponential scheme \cite{hochbruck2010exponential,crouseilles2020exponential} which is motivated by the fact that the linear part acts on a different scale compared to the nonlinear part in Vlasov type problems. Moreover, as discussed in Section \ref{subsection:DG}, the linear part can be computed exactly thanks to the exponential. Among the exponential schemes, we shall use the Lawson class of methods for stability reasons \cite{crouseilles2020exponential}. 

Denoting $U^n=({\vec{\bf f}^n, {\bf E}^n})\approx ({\vec{\bf f}(t^n), {\bf E}(t^n)}) = U(t^n)$ 
with $t^n=n\Delta t, \forall n\in \mathbb{N}$ ($\Delta t>0$ being the time step),  
the simplest (first order in time) Lawson method can be written as 
\begin{equation}
\label{eq:Lawson}
U^{n+1} = \exp(\Delta t L)U^n +\Delta t \exp(\Delta t L) N(U^n). 
\end{equation}
High order methods can be obtained from Runge-Kutta methods using the corresponding Butcher tableau \cite{crouseilles2020exponential}. 

The key point of exponential methods lies in the computation of  $\exp(\Delta t L)$ with $L$ given by \eqref{L_matrix}. To compute $\exp(A t)\  \forall t>0$, one considers the linear part only $\partial_t U=LU$. 
First, we observe that the distribution function $\mathbf{f}_{j}$ part is decoupled from the electric field part, so that it can be solved directly and we have, similarly as in Section \ref{subsection:DG} 
\begin{equation}\label{eqn:frepresentationVP}
   \mathbf{f}_{j}(t)=\exp(v_{j}At)\mathbf{f}_{j}(0). 
\end{equation} 
Now, let consider 
the electric field equation 
$$
{\partial_t \mathbf{E}}=
-\sum_{j} v_{ j} ({\bf 1}-\Pi)\mathbf{f}_{j}(t) \Delta v,
$$ 
which gives, after integrating it in time 
$$
\mathbf{E}(t)=\mathbf{E}(0)-\sum_{j} \int_0^t v_{ j} ({\bf 1}-\Pi)\mathbf{f}_{j}(s) \Delta v \mathrm{d}s. 
$$
Now, replacing $\mathbf{f}_{j}(s)$ by $\exp(v_{j}As)\mathbf{f}_{j}(0)$ from \eqref{eqn:frepresentationVP} enables to get an explicit  expression of $\mathbf{E}(t)$. Indeed, using properties used to derive \eqref{discrete_poisson_dg} we have
\begin{eqnarray*}
		\mathbf{E}(t)&=&\mathbf{E}(0)-\sum_{j} \int_0^t v_{ j} ({\bf 1}-\Pi)\mathbf{f}_{j}(s) \Delta v \mathrm{d}s=\mathbf{E}(0)-\sum_{j} \int_0^t v_{ j} ({\bf 1}-\Pi)\exp(v_{j}As)\mathbf{f}_{j}(0)\mathrm{d}s \Delta v \\
        &=&\mathbf{E}(0)-\sum_{j} \int_0^t v_{ j} ({\bf 1}-\Pi)\Bigg[\sum_{k=0}^{\infty}\frac{(v_{j}As)^k}{k!}\Big(\Pi\mathbf{f}_{j}(0)+({\bf 1}-\Pi)\mathbf{f}_{j}(0)\Big)\Bigg]\mathrm{d}s \Delta v\\
       &=&\mathbf{E}(0)-\sum_{j} v_{ j} ({\bf 1}-\Pi)\int_0^t\Bigg[\Pi\mathbf{f}_{j}(0)+\sum_{k=0}^{\infty}\frac{(v_{j}(A+\Pi)s)^k}{k!}({\bf 1}-\Pi)\mathbf{f}_{j}(0)\Bigg]\mathrm{d}s \Delta v\\
        &=&\mathbf{E}(0)-\sum_{j} v_{ j} ({\bf 1}-\Pi)\Bigg[ t\Pi\mathbf{f}_{j}(0)+\sum_{k=0}^{\infty}\frac{(v_{j}(A+\Pi))^kt^{k+1}}{(k+1)!}({\bf 1}-\Pi)\mathbf{f}_{j}(0)\Bigg] \Delta v\\
        &=&\mathbf{E}(0)-\sum_{j}\Bigg[({\bf 1}-\Pi)\sum_{k=0}^{\infty}(A+\Pi)^{-1}\frac{(v_{j}(A+\Pi)t)^{k+1}}{(k+1)!}({\bf 1}-\Pi)\mathbf{f}_{j}(0)\Bigg] \Delta v\\
        &=&\mathbf{E}(0)-\sum_{j}  ({\bf 1}-\Pi)(A+\Pi)^{-1}(\exp(v_{j}At)-{\bf 1})({\bf 1}-\Pi)\mathbf{f}_{j}(0) \Delta v\\
        &=&\mathbf{E}(0)-\sum_{j}  (A+\Pi)^{-1}(\exp(v_{j}At)-{\bf 1})({\bf 1}-\Pi)\mathbf{f}_{j}(0)  \Delta v\\
        &=&\mathbf{E}(0)+\Delta v\sum_{j} (A+\Pi)^{-1}({\bf 1}-\exp(v_{j}At))\mathbf{f}_{j}(0).
\end{eqnarray*}
Hence, denoting $A+\Pi$ as $\tilde{A}$, we deduce from the above calculation the explicit expression of $\exp(L t)$  
\begin{equation}
\label{expL}
\exp(L t) = 
\left( 
\begin{array}{lllllllccc}
	\;\;\;\;\;\;\;  e^{v_{1}A t} & \;\;\;\;\;\;\;{\bf 0}_{\tilde{N},\tilde{N}} & {\bf 0}_{\tilde{N},\tilde{N}} \;\;\;\;\;\;\;\hdots &  \;\;\;\;\;\;\; {\bf 0}_{\tilde{N},\tilde{N}} &  {\bf 0}_{\tilde{N},\tilde{N}} \\
		\;\;\;\;\;\;\;\;  {\bf 0}_{\tilde{N},\tilde{N}} & 	\;\;\;\;\;\;\; e^{v_{2}A t} &  {\bf 0}_{\tilde{N},\tilde{N}} \;\;\;\;\;\;\; \hdots &  \;\;\;\;\;\;\; {\bf 0}_{\tilde{N},\tilde{N}} & \vdots \\
		\;\;\;\;\;\;\;\; \vdots & \;\;\;\;\;\;\;\ddots & \ddots  & \;\;\;\;\;\;\; \vdots  &  \vdots \\
		\;\;\;\;\;\;\;\; {\bf 0}_{\tilde{N},\tilde{N}} & \;\;\;\;\;\;\;\hdots  &   {\bf 0}_{\tilde{N},\tilde{N}}& \;\;\;\;\;\;\; e^{v_{N_v}A t} &  {\bf 0}_{\tilde{N},\tilde{N}} \\
		\!\!\Delta v \tilde{A}^{-1}({\bf 1}-e^{v_{1}A t})& \Delta v  \tilde{A}^{-1}({\bf 1}-e^{v_{2}A\Delta t}) & \hdots  & \Delta v \tilde{A}^{-1}({\bf 1}-e^{v_{N_v}A t}) & {\bf 1}
\end{array} 
\right).
\end{equation}
Hence, the exponential-DG scheme for the Vlasov-Amp\`ere equation corresponds to \eqref{eq:Lawson}-\eqref{expL}. For this scheme, one can prove in the following proposition that a discrete Poisson equation is satisfied for each iteration.

\begin{pro}
The exponential DG method \eqref{eq:Lawson}-\eqref{expL} (and its generalization to high order Lawson Runge-Kutta) 
satisfied by $U^n=(\vec{\bf{f}}^n, {\bf E}^n)$ preserves the following discretized Poisson equation: 
$$
(A+\Pi) \mathbf{E}^n=-\sum_{j}({\bf 1}-\Pi) \mathbf{f}^n_{j}\Delta v, \;\;\; \forall n\in \mathbb{N}^\star, 
$$
provided that it is satisfied at the initial time $n=0$. 
 Here, $A$ is the DG matrix given by \eqref{dg_matrix}-\eqref{expression_MD}, $\Pi$ is the orthogonal projection onto Ker$(A)$  
and ${\bf{1}}$ is the identity matrix of size $(k+1)N_x$, 

\end{pro}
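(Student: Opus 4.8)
The plan is to proceed by induction on $n$, viewing the target identity as the statement that the linear constraint
$$
G(U) := (A+\Pi)\mathbf{E} + \Delta v\sum_{j} ({\bf 1}-\Pi)\mathbf{f}_{j} = 0
$$
defines a subspace left invariant by one Lawson step. Since the hypothesis is exactly $G(U^0)=0$ and the claim is $G(U^n)=0$ for all $n$, it suffices to establish the implication $G(U^n)=0 \Rightarrow G(U^{n+1})=0$. I would isolate the two building blocks of the step $U^{n+1}=\exp(\Delta t L)\big(U^n+\Delta t\,N(U^n)\big)$, namely the nonlinear update $U\mapsto U+\Delta t\,N(U)$ and the exponential propagator $\exp(\Delta t L)$, and show that each of them preserves $\{G=0\}$.

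For the nonlinear update, since $G$ is linear I would write $G\big(U+\Delta t\,N(U)\big)=G(U)+\Delta t\,G\big(N(U)\big)$ and compute $G(N(U))$ directly. The electric-field block of $N(U)$ vanishes, so the $(A+\Pi)\mathbf{E}$ contribution is zero, while the distribution-function blocks contribute $-\Delta v\,({\bf 1}-\Pi)\tilde{\mathbf{E}}\sum_{j}(\mathbf{{\mathcal{D}f}})_j$. This vanishes because the central finite-difference operator telescopes, $\sum_{j}(\mathbf{{\mathcal{D}f}})_j=0$, the very identity already used to derive \eqref{ampere_DG}. Hence $G(N(U))=0$ for every $U$, so the nonlinear update leaves $G$ unchanged and in particular maps $\{G=0\}$ into itself.

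For the exponential propagator I would prove the stronger fact that $\exp(sL)$ preserves $\{G=0\}$ for every $s$, by differentiating $G$ along the exact linear flow $\partial_t U=LU$. Using $\partial_t\mathbf{f}_j=v_jA\mathbf{f}_j$ and $\partial_t\mathbf{E}=-\Delta v\sum_j v_j({\bf 1}-\Pi)\mathbf{f}_j$, together with the structural identities $A\Pi=\Pi A=0$ (so that $({\bf 1}-\Pi)A=A$) and $(A+\Pi)({\bf 1}-\Pi)=A$ — the relations between $A$ and $\Pi$ established in Appendix \ref{appendixC} and already invoked to obtain \eqref{expL} — I would obtain
$$
\frac{d}{dt}G(U(t)) = -(A+\Pi)\Delta v\sum_j v_j({\bf 1}-\Pi)\mathbf{f}_j + \Delta v\sum_j v_j({\bf 1}-\Pi)A\mathbf{f}_j = -\Delta v\sum_j v_jA\mathbf{f}_j + \Delta v\sum_j v_jA\mathbf{f}_j = 0.
$$
Thus $G$ is a first integral of the linear dynamics, $\exp(sL)$ maps $\{G=0\}$ into itself, and combining the two blocks yields $G(U^{n+1})=0$, closing the induction for the first-order scheme \eqref{eq:Lawson}.

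For the generalization to any high-order Lawson Runge-Kutta method I would exploit that $\{G=0\}$ is a linear subspace and that the final update together with every internal stage is a linear combination of $U^n$ and of terms $\exp(\alpha\Delta t L)\,N(U^{(l)})$ for the various stages $U^{(l)}$ and coefficients $\alpha$. Each $N(U^{(l)})$ already lies in $\{G=0\}$ by the nonlinear-block computation above — crucially, this holds regardless of whether $U^{(l)}$ itself satisfies the constraint — and $\exp(\alpha\Delta t L)$ keeps it there; since $U^n\in\{G=0\}$ by the induction hypothesis and $\exp(\Delta t L)$ preserves the subspace, the whole combination remains in $\{G=0\}$. I expect the main obstacle to be bookkeeping rather than anything conceptual: carefully justifying the algebraic identities $A\Pi=\Pi A=0$ and $(A+\Pi)({\bf 1}-\Pi)=A$ (equivalently, that $\Pi$ is the orthogonal projection onto $\mathrm{Ker}(A)$ with $\mathrm{range}(A)\perp\mathrm{Ker}(A)$) and then tracking the cancellations through the block structure of $\exp(\Delta t L)$ in \eqref{expL}.
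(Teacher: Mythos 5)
Your proof is correct, and it takes a genuinely different route from the paper's. The paper argues by direct computation: it writes out the $\mathbf{E}$-row of the explicit exponential \eqref{expL}, splits the nonlinear contribution into a part killed by the conservativity of ${\cal D}$ and a part that is re-expressed through the update \eqref{eqn:VADGschemef} for $\mathbf{f}_j^{n+1}$, and lands on $\mathbf{E}^{n+1}=\mathbf{E}^n+\Delta v\,(A+\Pi)^{-1}\sum_j\big(({\bf 1}-\Pi)\mathbf{f}^n_j-({\bf 1}-\Pi)\mathbf{f}^{n+1}_j\big)$, from which induction closes the argument; the extension to high-order Lawson Runge--Kutta is only asserted. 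You instead isolate the linear constraint functional $G$ and prove two structural facts: first, $G(N(V))=0$ for \emph{every} $V$ (the same conservativity $\sum_j({\cal D}\mathbf{f})_j=0$, combined with the observation that $\tilde{\mathbf{E}}$ is independent of the velocity index $j$ and so factors out of the sum); second, $G$ is a first integral of the linear flow $\partial_t U=LU$ with $L$ from \eqref{L_matrix}, so $\exp(sL)$ preserves every level set of $G$ without the exponential ever being computed -- your differential identity encodes exactly the information carried by the last block row of \eqref{expL}. This buys two things over the paper's computation: the argument is independent of the explicit form of the propagator, and the high-order generalization becomes transparent, since $N(U^{(l)})\in\{G=0\}$ holds unconditionally at every internal stage, so any linear combination of propagated stage terms stays in the subspace -- precisely the point the paper leaves implicit. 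Both arguments rest on the same structural relations $A\Pi=\Pi A=0$ and $(A+\Pi)({\bf 1}-\Pi)=A$ (equivalently, $\mathrm{Ker}(A)\perp\mathrm{Range}(A)$, so that the orthogonal projection coincides with the spectral one), which the paper likewise invokes without detailed proof both in deriving \eqref{ampere_DG} and in computing \eqref{expL}; you flag this dependence explicitly, so you are on equal footing with the paper on that point.
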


\begin{proof}
We present the proof for first order Lawson case (forward Euler), the proof can be generalized to arbitrary explicit Runge-Kutta scheme. First, we assume 
the Poisson equation 
$(A+\Pi)\mathbf{E}^0=-\sum_{j}({\bf 1}-\Pi)\mathbf{f}_{j}^0\Delta v$ holds at the initial time. \\
Next, from the scheme \eqref{eq:Lawson} with $\exp(Lt)$ 
given by \eqref{expL}, we have 
\begin{equation}
\label{eqn:VADGschemef}
    \mathbf{f}_{j}^{n+1}=\exp(v_{j}A\Delta t)\mathbf{f}_{j}^n-\Delta t\exp(v_{j}A\Delta t)(\tilde{\mathbf{E}} \mathbf{{\mathcal{D}f}})_{j}^n, 
\end{equation}
whereas for the ${\bf E}$ component, we have 
\begin{eqnarray*}
		\mathbf{E}^{n+1}&=&\mathbf{E}^n+\Delta v\sum_{j} (A+\Pi)^{-1}({\bf 1}-\exp(v_{j}A\Delta t))\mathbf{f}_{j}^n \nonumber\\
  &&-\Delta t\Delta v\sum_{j} (A+\Pi)^{-1}({\bf 1}-\exp(v_{j}A\Delta t))(\tilde{\mathbf{E}} \mathbf{{\mathcal{D}f}})_{j}^n\nonumber\\
  &=&\mathbf{E}^n+\Delta v\sum_{j} (A+\Pi)^{-1}({\bf 1}-\Pi)({\bf 1}-\exp(v_{j}A\Delta t))\mathbf{f}_{j}^n \nonumber\\ 
  &&-\Delta t\Delta v\sum_{j} (A+\Pi)^{-1}({\bf 1}-\Pi)({\bf 1}-\exp(v_{j}A\Delta t))(\tilde{\mathbf{E}} \mathbf{{\mathcal{D}f}})_{j}^n. \nonumber\\
 \end{eqnarray*} 
The last term can be split into two parts: the first one 
$-\Delta t\Delta v (A+\Pi)^{-1}({\bf 1}-\Pi)\sum_{j}(\tilde{\mathbf{E}} \mathbf{{\mathcal{D}f}})_{j}^n$ vanishes 
thanks to the conservative properties of  the discrete operator $\mathbf{{\mathcal{D}}}$ whereas the second one, we use  \eqref{eqn:VADGschemef} to get 
 \begin{eqnarray*}
\Delta t\Delta v (A+\Pi)^{-1}\sum_{j}({\bf 1}-\Pi)\exp(v_{j}A\Delta t)(\tilde{\mathbf{E}} \mathbf{{\mathcal{D}f}})_{j}^n \!\!&\!\!=\!\!&\!\! \Delta v (A+\Pi)^{-1}\sum_{j}({\bf 1}-\Pi)(\exp(v_{j}A\Delta t)\mathbf{f}_{j}^n-\mathbf{f}_{j}^{n+1})\nonumber\\
\hspace{-2cm}&\hspace{-5cm}=&\hspace{-2.5cm} \Delta v (A+\Pi)^{-1}\sum_{j}\Big[({\bf 1}-\Pi)\exp(v_{j}A\Delta t)\mathbf{f}_{j}^n-({\bf 1}-\Pi)\mathbf{f}_{j}^{n+1}\Big].
\end{eqnarray*} 
Finally, we then get 
 \begin{eqnarray}
\mathbf{E}^{n+1} &=&\mathbf{E}^n+\Delta v\sum_{j} (A+\Pi)^{-1}({\bf 1}-\Pi)({\bf 1}-\exp(v_{j}A\Delta t))\mathbf{f}_{j}^n\nonumber\\
&&+\Delta v (A+\Pi)^{-1}\sum_{j}\Big[({\bf 1}-\Pi)\exp(v_{j}A\Delta t)\mathbf{f}_{j}^n-({\bf 1}-\Pi)\mathbf{f}_{j}^{n+1}\Big]\nonumber\\
        	\label{eq:Poissoneequationpreserving}
		&=&\mathbf{E}^n+\Delta v\sum_{j} (A+\Pi)^{-1}( ({\bf 1}-\Pi)\mathbf{f}_{j}^n-({\bf 1}-\Pi)\mathbf{f}_{j}^{n+1}).
\end{eqnarray} 
By induction, if the discrete Poisson equation is satisfied at iteration $n$, then it is satisfied at iteration $n+1$ and the proof is complete.

\end{proof}

\begin{remark}
In this remark, we discuss how to compute the matrix $\Pi$. 
As mentioned in Remark \ref{remark_eigenA}, 
when $(k+1)N_x$ is odd (referred as odd case), $0$ is a single eigenvalue and the associated eigenvector $u_1$
corresponds to the constant function in the DG space (see Appendix \ref{appendixC}). When $(k+1)N_x$ is even, $0$ 
has a double multiplicity and one has to find another eigenvector $u_2$ (see Appendix \ref{appendixC}). 
Once we get the eigenvectors $u_1,\ u_2\in \mathbb{R}^{(k+1)N_x}$ of $A$ associated to the eigenvalue $0$, then $\Pi$ is given by $\Pi x=\langle x,u_1\rangle u_1+\langle x,u_2\rangle u_2 = (u_1^T\otimes u_1+u_2^T\otimes u_2)x$, for all $x\in \mathbb{R}^{(k+1)N_x}$, 
with the Kronecker product $\otimes$ and $(v^T\otimes v)_{i,j} = v_i v_j$. Some examples are given in Appendix \ref{appendixC}.
\end{remark}

\subsection{Vlasov Maxwell equations 1dx-2dv}
\label{VM1dx-2dv}
In this part, We consider the following Vlasov-Maxwell 1dx-2dv model satisfied by $f(t, x, v_1, v_2)$, $E_1(t, x)$, $E_2(t, x)$, $B(t, x)$, 
with $t\geq 0, x\in [0,L] (L>0)$ and $(v_1, v_2)\in \mathbb{R}^2$
\begin{equation}
	\label{eq:model}
	\left\{
	\begin{aligned}
		&\partial_t f+v_1\partial_x f+ E_1 \partial_{v_1} f +E_2 \partial_{v_2} f +B(v_2 \partial_{v_1} f -v_1 \partial_{v_2} f )=0, \\
		&\partial_t B =- \partial_x E_2, \\
		&\partial_t E_1=-\int_ {\mathbb{R}^2} v_1f \mathrm{d}v_1\mathrm{d}v_2+\bar{J}_1,\\
		&\partial_t E_2=-\partial_x B-\int_{\mathbb{R}^2} v_2f \mathrm{d}v_1\mathrm{d}v_2+\bar{J}_2,
	\end{aligned}
	\right.
\end{equation} 
with the initial conditions $(f_0(x,v), E_1^0(x), E_2^0(x), B^0(x))$ 
such that the Poisson equation is satisfied initially 
$\partial_x E_1^0(x)= \int_{\mathbb{R}^2} f_0(x,v) dv_1 dv_2- \bar{\rho}$, with $\bar{\rho}=\frac{1}{L}\int_0^L \int_{\mathbb{R}^2} f_0(x,v) dx dv_1 dv_2$ and periodic boundary conditions are imposed in space. Here $\bar{J}_i=\frac{1}{L}\int_0^L \int_{\mathbb{R}^2} v_i f dx dv_1 dv_2$ ensures that the electric fields are zero average in space. 

\subsubsection{Semi-discretization}
We follow the lines of the above subsection and  use a DG method in the space direction $x$ and we consider a grid in the velocity direction $v_{\ell,j_\ell} = -v_{\ell, \max} + j_\ell \Delta v_\ell, v_{\ell}\in [-v_{\ell, \max},v_{\ell, \max}],\ell=1, 2, \Delta v_\ell=2v_{\ell, \max}/{N_{v_\ell}}, N_{v_\ell}\in \mathbb{N}^\star$. 
The definitions of the different objects are a direct extension of 
the definitions introduced in the previous part.  
Indeed, we denote by $\mathbf{f}_{j_1,j_2}\in\mathbb{R}^{(k+1)N_x}$ the DG  coefficient vector of $f_{j_1,j_2}(t, x)=f_h(t, x, v_{j_1}, v_{j_2})\approx f(t, x, v_{j_1}, v_{j_2})$ in space and evaluated at the velocity grid, and $\mathbf{E_1}, \mathbf{E_2}, \mathbf{B}\in\mathbb{R}^{(k+1)N_x}$ are 
the DG  coefficient vectors of 
$(E_{1,h}, E_{2,h}, B_h)(t, x)\approx (E_1, E_2, B)(t, x)$. 
Moreover, $(\tilde{\mathbfcal{F}} {\cal D} \mathbf{f})_{j_1, j_2}$ 
(with $\tilde{\mathbfcal{F}}_{j_1, j_2}\in \mathbb{M}_{(k+1)N_x, (k+1)N_x}(\mathbb{R})$) is obtained as previously by a DG approximation of the nonlinear term $({\cal F} \cdot  \nabla_v f), \ \mbox{ with }  {\cal F} =({E_1}+ {B} v_2, {E_2}  -{B} v_1 )$. For fixed indices 
$j_1, j_2$, the derivation of the numerical scheme is very similar 
to the 1dx-1dv case and we then obtain the following semi-discretized (in space and velocity) scheme for $j_\ell=1, \dots, N_{v_\ell} (\ell=1, 2)$
\begin{equation}
	\label{eq:model_dg_2dv}
	\left\{
	\begin{aligned}
		&\partial_t \mathbf{f}_{j_1,j_2}=v_{1,j_1}A \mathbf{f}_{j_1,j_2}- ( \tilde{\mathbfcal{F}} {\cal D} \mathbf{f})_{j_1,j_2},\\ 
		&\partial_t \mathbf{B} =A \mathbf{E_2}, \\
		&\partial_t \mathbf{E_1}=-\sum_{j_1, j_2} v_{1, j_1} ({\bf 1}-\Pi)\mathbf{f}_{j_1,j_2} \Delta v_1\Delta  v_2,\\
		&\partial_t \mathbf{E_2}=A \mathbf{B}- \sum_{j_1, j_2} v_{2, j_2} ({\bf 1}-\Pi)\mathbf{f}_{j_1,j_2} \Delta v_1\Delta v_2,
	\end{aligned}
	\right.
\end{equation} 
where $A$ is given by \eqref{dg_matrix}-\eqref{expression_MD} and ${\bf 1}$ the identity matrix of size $(k+1)N_x$ and $\Pi$ the projection matrix onto Ker$(A)$ introduced in the previous part. 
Let denote $U(t)=(\vec{\mathbf{f}}(t), \mathbf{B}(t), \mathbf{E_1}(t), \mathbf{E_2}(t))$, where  
$\vec{\mathbf{f}}(t)\in \mathbb{R}^{(k+1)N_x N_{v_1}N_{v_2}}$ contains the DG coefficients \\ 
$\mathbf{f}_{j_1, j_2}$, 
$\mathbf{B}(t),\mathbf{E}_1(t), \mathbf{E}_2(t)\in \mathbb{R}^{(k+1)N_x}$ denote the DG coefficients of the electromagnetic fields.  
Using the above notations and the following ones 
\begin{eqnarray*}
\vec{v_\ell}\in \mathbb{R}^{N_{v_\ell}}, &&(\vec{v_\ell})_{j}=v_{\ell,j}=-v_{\ell, \max}+ j \Delta v_\ell, \;\;  j=1, \dots, N_{v_\ell} \mbox{ and } \ell=1, 2, \nonumber\\ 
\vec{f}_{\star, j_2}\in \mathbb{R}^{(k+1)N_x N_{v_1}} && \!\!\!\!\!\!\!\!\mbox{ for  } j_2=1, \dots, N_{v_2}. 
\end{eqnarray*}
We also introduce the following compact notations for the size of the matrices: $\tilde{N}=(k+1)N_x$ and $\tilde{N}_1= (k+1)N_x N_{v_1} = \tilde{N}N_{v_1}$.  
The system \eqref{eq:model_dg_2dv} can be recast  as 
\begin{equation}
	\label{ULU2_core}
	\partial_t U =L U + N(U), 
\end{equation}
with 
\begin{eqnarray}
	U&=& \left( 
	\begin{array}{llll}
		\vec{\mathbf{f}}_{\star,1}\\
		\vec{\mathbf{f}}_{\star,2}\\
		\vdots \\
		\!\!\! \vec{\mathbf{f}}_{\star,N_{v_2}}  \!\!\!\!\\
		\mathbf{B}\\
		\mathbf{E_2}\\
		\mathbf{E_1}
	\end{array} 
	\right), \;\;\;\;\;\;\;\;\;\;\;\;  
	N(U)= 
	\left( 
	\begin{array}{llllccc}
		-(\tilde{\mathbfcal{F}}{\cal D} \vec{\mathbf{f}})_{\star,1}\\
		-(\tilde{\mathbfcal{F}}{\cal D} \vec{\mathbf{f}})_{\star,2}\\
		\vdots\\
		-(\tilde{\mathbfcal{F}}{\cal D} \vec{\mathbf{f}})_{\star,N_{v_2}}\\
		{\bf 0} \\
		{\bf 0} \\
		{\bf 0}
	\end{array} 
	\right), \;\;\;\;\;\;\;\;\;\;\;\; \vec{\mathbf{f}}_{\star,j_2}\in\mathbb{R}^{\tilde{N_1}}, \forall j_2=1, \dots, N_{v_2}  \nonumber\\ 
 \label{L_matrix2D}
	L&=& \left( 
	\begin{array}{lllllllccc}
		 \small{\mbox{diag}(\vec{v_{1}})\!\otimes \! A}& {\bf 0}_{\tilde{N_1},\tilde{N_1}} & {\bf 0}_{\tilde{N_1},\tilde{N_1}}\hdots &   {\bf 0}_{\tilde{N_1},\tilde{N_1}} &  {\bf 0}_{\tilde{N_1},\tilde{N}} & {\bf 0}_{\tilde{N_1},\tilde{N}} & {\bf 0}_{\tilde{N_1},\tilde{N}}\\
		{\bf 0}_{\tilde{N_1},\tilde{N_1}} 
  & 
     \small{\mbox{diag}(\vec{v_{1}})\!\otimes \! A}
  &  {\bf 0}_{\tilde{N_1},\tilde{N_1}}\hdots &   {\bf 0}_{\tilde{N_1},\tilde{N_1}} & \vdots &  \vdots & \vdots\\
		\vdots & \ddots & \ddots  &  \vdots  &  \vdots & \vdots & \vdots\\
		{\bf 0}_{\tilde{N_1},\tilde{N_1}} & \hdots  &   {\bf 0}_{\tilde{N_1},\tilde{N_1}}& 
     \small{\mbox{diag}(\vec{v_{1}})\!\otimes \! A}
  &  {\bf 0}_{\tilde{N_1},\tilde{N}}  &  {\bf 0}_{\tilde{N_1},\tilde{N}}  &  {\bf 0}_{\tilde{N_1},\tilde{N}} \\
		{\bf 0}_{\tilde{N},\tilde{N_1}} & \hdots  & {\bf 0}_{\tilde{N},\tilde{N_1}} & {\bf 0}_{\tilde{N},\tilde{N_1}} & {\bf 0}_{\tilde{N},\tilde{N}} &  A & {\bf 0}_{\tilde{N},\tilde{N}}  \\
{\cal E}_{2,1} & {\cal E}_{2,2} & \hdots & {\cal E}_{2, N_{v_2}} & A & {\bf 0}_{\tilde{N},\tilde{N}} & {\bf 0}_{\tilde{N},\tilde{N}}\\
{\cal E}_{1} & {\cal E}_{1} & \hdots & {\cal E}_{1} & {\bf 0}_{\tilde{N},\tilde{N}} & {\bf 0}_{\tilde{N},\tilde{N}} & {\bf 0}_{\tilde{N},\tilde{N}}\\
	\end{array} 
	\right) 
\end{eqnarray}  
where diag$(\vec{v_1})\in \mathbb{M}_{N_{v_1},N_{v_1}}(\mathbb{R})$  denotes the diagonal matrix with $\vec{v_1}\in \mathbb{R}^{N_{v_1}}$ on its diagonal, $\otimes$ denotes Kronecker product, 
$\small{\mbox{diag}(\vec{v_{1}})\!\otimes \! A} \in \mathbb{M}_{\tilde{N}_1, \tilde{N}_1}$  and 
${\bf 0}_{m,n}$ is the zero matrix with $m$ lines and $n$ columns. Moreover, the matrices ${\cal E}_{1}, {\cal E}_{2,j_2} \in \mathbb{M}_{\tilde{N}, \tilde{N}_1}(\mathbb{R})$ for $j_2=1, \dots, N_{v_2}$ are defined by 
\begin{eqnarray*}
{\cal E}_{1} &=& -\Delta v_1\Delta v_2 \vec{v_{1}}\otimes ({\bf 1}-\Pi), \nonumber\\
{\cal E}_{2,j_2} &=& -\Delta v_1\Delta v_2 v_{2, j_2}\mathbbm{1}\otimes ({\bf 1}-\Pi) \;\; \mbox{ with } \;\; \mathbbm{1}=(1, \dots, 1)\in\mathbb{R}^{N_{v_1}}. 
\end{eqnarray*}
The size of the matrix $L$ is $(\tilde{N_1} N_{v_2} + 3\tilde{N})\times(\tilde{N_1} N_{v_2} + 3\tilde{N})=(3+N_{v_1}N_{v_2})(k+1)N_x\times(3+N_{v_1}N_{v_2})(k+1)N_x$. Even if it is a large matrix, one can see that $L$ is sparse which will help to compute its exponential. 

\subsubsection{Time discretization}
We now study the time discretization of \eqref{ULU2_core} and as previously, we will design an exponential scheme. To do so, the discrete unknown $U^n\approx U(t^n) \; (t^n=n\Delta t, \Delta t>0)$ is updated by 
\begin{equation}
\label{expo_DG_1}
U^{n+1} = \exp(\Delta t L)U^n +\Delta t \exp(\Delta t L) N(U^n), 
\end{equation}
and one has to compute $\exp(\Delta t L)$. We give the following proposition to show the representation of $\exp(\Delta t L)$.

\begin{pro}
\label{prop_expo_2dv}
The exponential of the matrix $L$ given by \eqref{L_matrix2D} is given by 
$$
\exp(L\Delta t) = 
\left( 
\begin{array}{lllllllccc}
e^{\Delta t\small{\mbox{diag}(\vec{v_{1}})\!\otimes \! A}}& 
{\bf 0}_{\tilde{N_1},\tilde{N_1}} & {\bf 0}_{\tilde{N_1},\tilde{N_1}} \hdots &   {\bf 0}_{\tilde{N_1},\tilde{N_1}} &  {\bf 0}_{\tilde{N_1},\tilde{N}} & {\bf 0}_{\tilde{N_1},\tilde{N}} & {\bf 0}_{\tilde{N_1},\tilde{N}}\\
	{\bf 0}_{\tilde{N_1},\tilde{N_1}} &
 e^{\Delta t\small{\mbox{diag}(\vec{v_{1}})\!\otimes \! A}}&
 {\bf 0}_{\tilde{N_1},\tilde{N_1}}\hdots &   {\bf 0}_{\tilde{N_1},\tilde{N_1}} & \vdots &  \vdots & \vdots\\
	\vdots & \ddots & \ddots  &  \vdots  &  \vdots & \vdots & \vdots\\
	{\bf 0}_{\tilde{N_1},\tilde{N_1}} & \hdots  &   {\bf 0}_{\tilde{N_1},\tilde{N_1}} & 
 e^{\Delta t\small{\mbox{diag}(\vec{v_{1}})\!\otimes \! A}} &
 {\bf 0}_{\tilde{N_1},\tilde{N}}  &  {\bf 0}_{\tilde{N_1},\tilde{N}}  & {\bf 0}_{\tilde{N_1},\tilde{N}} \\
{}_e{\cal B}_1 & {}_e{\cal B}_2 & \hdots & {}_e{\cal B}_{N_{v_2}} &   \frac{e^{A\Delta t}+e^{-A\Delta t}}{2}&
\frac{e^{A\Delta t}-e^{-A\Delta t}}{2}& {\bf 0}_{\tilde{N},\tilde{N}}    \\
{}_e{\cal E}_{2,1} & {}_e{\cal E}_{2,2} & \hdots & {}_e{\cal E}_{2,N_{v_2}} &    \frac{e^{A\Delta t}-e^{-A\Delta t}}{2}&
 \frac{e^{A\Delta t}+e^{-A\Delta t}}{2}& {\bf 0}_{\tilde{N},\tilde{N}}   \\
{}_e{\cal E}_{1} & {}_e{\cal E}_{1} & \hdots & {}_e{\cal E}_{1} & {\bf 0}_{\tilde{N},\tilde{N}} & {\bf 0}_{\tilde{N},\tilde{N}}  & {\bf 1} 
\end{array} 
\right)
$$
where ${}_e{\cal B}_{j_2}, {}_e{\cal E}_{2,j_2}, {}_e{\cal E}_{1} \in \mathbb{M}_{\tilde{N}, \tilde{N}_1}(\mathbb{R})$  
for $j_2=1, \dots, N_{v_2}$ are given by 
\begin{eqnarray*}
{}_e{\cal B}_{j_2} &=& \Delta v_1 \Delta v_2  \; v_{2,j_2}(A+\Pi)^{-1}\vec{\alpha},\nonumber\\
{}_e{\cal E}_{2,j_2} &=& \Delta v_1 \Delta v_2  \; v_{2,j_2}(A+\Pi)^{-1}\vec{\beta},\nonumber\\
{}_e{\cal E}_1 &=& \Delta v_1 \Delta v_2 (A+\Pi)^{-1} (\mathbbm{1}\otimes {\bf 1} -  e^{\Delta t\small{\mbox{diag}(\vec{v_{1}})\!\otimes \! A}})
\end{eqnarray*}
where the matrices $\vec{\alpha}, \vec{\beta}$ are given by 
\begin{eqnarray*}
 \vec{\alpha}&=&[\alpha_{1},\alpha_{2},\dots ,\alpha_{N_{v_1}}]\in \mathbb{M}_{\tilde{N},\tilde{N_1}}(\mathbb{R}) \mbox{ with } \alpha_{j_1}=\Big[ \frac{-e^{A\Delta t} }{2(1-v_{1, j_1})} - \frac{e^{-A\Delta t} }{2(1+v_{1, j_1})} + \frac{ e^{A \Delta t  v_{1, j_1}} }{(1-v_{1, j_1}^2)} \Big],\nonumber\\
\vec{\beta}&=&[\beta_{1},\beta_{2},\dots ,\beta_{N_{v_1}}]\in \mathbb{M}_{\tilde{N},\tilde{N_1}}(\mathbb{R}) \mbox{ with } \beta_{j_1}=\Big[  \frac{-e^{A\Delta t} }{2(1-v_{1, j_1})} + \frac{e^{-A\Delta t} }{2(1+v_{1, j_1})} + \frac{v_{1, j_1} e^{A \Delta t  v_{1, j_1}} }{(1-v_{1, j_1}^2)} \Big].
\end{eqnarray*}
\end{pro}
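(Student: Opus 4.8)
The plan is to compute $\exp(L\Delta t)$ constructively by solving the homogeneous system $\partial_t U = LU$ for an arbitrary initial datum $U(0)$ and reading off the propagator block by block, exactly as was done for the Vlasov--Amp\`ere matrix leading to \eqref{expL}. The one-sided coupling in \eqref{L_matrix2D}, where the electromagnetic fields do not feed back into $\vec{\mathbf{f}}$, makes this natural. First, the distribution-function part is completely decoupled: each $\vec{\mathbf{f}}_{\star,j_2}$ solves $\partial_t \vec{\mathbf{f}}_{\star,j_2} = (\mbox{diag}(\vec{v_1})\otimes A)\,\vec{\mathbf{f}}_{\star,j_2}$, so $\vec{\mathbf{f}}_{\star,j_2}(t) = \exp(t\,\mbox{diag}(\vec{v_1})\otimes A)\,\vec{\mathbf{f}}_{\star,j_2}(0)$, which yields the diagonal blocks of the claimed matrix. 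The $\mathbf{E_1}$ line $\partial_t\mathbf{E_1} = \sum_{j_2}{\cal E}_1\vec{\mathbf{f}}_{\star,j_2}(t)$ is then solved by direct integration; since ${\cal E}_1$ carries the factor $({\bf 1}-\Pi)$, this computation is identical to the one producing \eqref{expL} in the $1d_x$--$1d_v$ case, with $A^{-1}$ replaced by $(A+\Pi)^{-1}$ on $R(A)$, and gives ${}_e{\cal E}_1$.

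The core of the argument is the coupled $(\mathbf{B},\mathbf{E_2})$ subsystem, whose homogeneous generator is $\begin{pmatrix}{\bf 0} & A\\ A & {\bf 0}\end{pmatrix}$. I would diagonalize it with the involution $P = \tfrac{1}{\sqrt2}\begin{pmatrix}{\bf 1} & {\bf 1}\\ {\bf 1} & -{\bf 1}\end{pmatrix}$, which satisfies $P^{-1}=P$ and $P\begin{pmatrix}{\bf 0} & A\\ A & {\bf 0}\end{pmatrix}P = \begin{pmatrix}A & {\bf 0}\\ {\bf 0} & -A\end{pmatrix}$. Conjugating $\exp$ back through $P$ produces exactly the $\cosh$/$\sinh$-type blocks $\tfrac12(e^{A\Delta t}\pm e^{-A\Delta t})$ displayed in the central $2\times 2$ block of $\exp(L\Delta t)$. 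The forcing $\sum_{j_2}{\cal E}_{2,j_2}\vec{\mathbf{f}}_{\star,j_2}(s)$ enters only the $\mathbf{E_2}$ equation, and I would treat it by Duhamel's formula with the propagator just obtained: applied to $(0, g(s))^{T}$ it maps $g(s)$ to the pair $\bigl(\tfrac12(e^{A(t-s)}-e^{-A(t-s)})g(s),\ \tfrac12(e^{A(t-s)}+e^{-A(t-s)})g(s)\bigr)$, the first component feeding $\mathbf{B}$ and the second feeding $\mathbf{E_2}$.

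The final and most computational step is to insert $\vec{\mathbf{f}}_{\star,j_2}(s)=\exp(s\,\mbox{diag}(\vec{v_1})\otimes A)\vec{\mathbf{f}}_{\star,j_2}(0)$, so that at velocity node $j_1$ the distribution-function propagator reduces to $e^{v_{1,j_1}As}$, and then to evaluate $\int_0^t \tfrac12(e^{A(t-s)}\pm e^{-A(t-s)})\,e^{v_{1,j_1}As}\,ds$. On $R(A)$ I would use the factorizations $e^{\pm A(t-s)}e^{v_{1,j_1}As}=e^{\pm At}e^{(v_{1,j_1}\mp1)As}$, integrate with $(A+\Pi)^{-1}$ in place of $A^{-1}$, and regroup the three surviving exponentials $e^{A\Delta t}$, $e^{-A\Delta t}$ and $e^{v_{1,j_1}A\Delta t}$. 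The partial-fraction identities $\tfrac{1}{v-1}-\tfrac{1}{v+1}=\tfrac{2}{v^2-1}$ and $\tfrac{1}{v-1}+\tfrac{1}{v+1}=\tfrac{2v}{v^2-1}$ (with $v=v_{1,j_1}$) then assemble the coefficients into exactly $\alpha_{j_1}$ for the $\mathbf{B}$ line and $\beta_{j_1}$ for the $\mathbf{E_2}$ line, once the common factor $(A+\Pi)^{-1}$ and the prefactor $\Delta v_1\Delta v_2\,v_{2,j_2}$ (coming from ${\cal E}_{2,j_2}$) are pulled out; this produces ${}_e{\cal B}_{j_2}$ and ${}_e{\cal E}_{2,j_2}$.

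The main obstacle I anticipate is the careful handling of the singular matrix $A$ together with the projection $({\bf 1}-\Pi)$: one must justify replacing $A^{-1}$ by $(A+\Pi)^{-1}$ inside every integral, which is legitimate because the factor $({\bf 1}-\Pi)$ restricts the computation to $R(A)$, where $A$ and $A+\Pi$ coincide and are invertible, precisely the property already used for \eqref{discrete_poisson_dg}. One must also observe that the velocity grid is assumed to avoid the resonant nodes $v_{1,j_1}=\pm1$, where the denominators $1\mp v_{1,j_1}$ vanish; at such a node the relevant integral degenerates and $\alpha_{j_1}$, $\beta_{j_1}$ would have to be replaced by their limiting (L'H\^opital) forms containing a factor $\Delta t\,e^{\pm A\Delta t}$. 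As an independent check, one can instead verify directly that the stated matrix $M(\Delta t)$ satisfies $M(0)={\bf 1}$ and $\tfrac{d}{dt}M=LM$, so that uniqueness of solutions to the matrix ODE forces $M=\exp(L\Delta t)$.
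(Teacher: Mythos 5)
Your proposal matches the paper's proof essentially step for step: the decoupled $\vec{\mathbf{f}}_{\star,j_2}$ blocks solved exactly by $e^{t\,\mathrm{diag}(\vec{v_1})\otimes A}$, direct time integration of the $\mathbf{E_1}$ line with $(A+\Pi)^{-1}$ legitimized on $R(A)$ by the factor $({\bf 1}-\Pi)$, and Duhamel's formula for the coupled $(\mathbf{B},\mathbf{E_2})$ subsystem leading to the same integrals $\tfrac12\int \bigl(e^{A(t-s)}\pm e^{-A(t-s)}\bigr)e^{v_{1,j_1}As}\,ds$ and the same partial-fraction regrouping into $\alpha_{j_1}$ and $\beta_{j_1}$. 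Your explicit diagonalization of the Maxwell block via the involution $P$, your caveat about the resonant nodes $v_{1,j_1}=\pm 1$ (where the stated denominators degenerate and the paper is silent), and the closing uniqueness-of-ODE verification are small refinements, not a different route.
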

\begin{proof}
First the $(\tilde{N_1} N_{v_2})\times(\tilde{N_1} N_{v_2})$ block is diagonal and the diagonal part 
is $e^{\Delta t \small{\mbox{diag}(\vec{v_{1}})\!\otimes \! A}}$ ($N_{v_2}$ times). Second, the $3\times 3$ right bottom block corresponds to the homogeneous Maxwell equations. Its exponential can be computed and is equal to 
$$
\exp \left( 
\begin{array}{lll}
		{\bf 0}_{\tilde{N},\tilde{N}}& A\Delta t & {\bf 0}_{\tilde{N},\tilde{N}}\\
	A\Delta t & {\bf 0}_{\tilde{N},\tilde{N}} & {\bf 0}_{\tilde{N},\tilde{N}}\\
	{\bf 0}_{\tilde{N},\tilde{N}}& {\bf 0}_{\tilde{N},\tilde{N}} & {\bf 0}_{\tilde{N},\tilde{N}} 
\end{array}
\right) 
=
\left(
\begin{array}{lll}
	\frac{\exp(A\Delta t)+\exp(-A\Delta t)}{2} &\frac{\exp(A\Delta t)-\exp(-A\Delta t)}{2} &  {\bf 0}_{\tilde{N},\tilde{N}}\\
	\frac{\exp(A\Delta t)-\exp(-A\Delta t)}{2} & \frac{\exp(A\Delta t)+\exp(-A\Delta t)}{2}&  {\bf 0}_{\tilde{N},\tilde{N}}\\
	 {\bf 0}_{\tilde{N},\tilde{N}}&  {\bf 0}_{\tilde{N},\tilde{N}} & {\bf 1}
\end{array}
\right).
$$
Finally, we compute the three last block lines of $\exp(L\Delta t)$. 

\paragraph{Computation of ${}_e{\cal E}_{1}$
: solve $\mathbf{E_1}$\\}
First, we have for $\mathbf{f}_{j_1,j_2}(t)$  
$$
\mathbf{f}_{j_1,j_2}(t) = e^{v_{1, j_1} A(t-t^n) }\mathbf{f}_{j_1,j_2}(t^{n}), 
$$
which enables to compute $\mathbf{E_1}(t^{n+1})$
\begin{eqnarray*}
	\mathbf{E_1}(t^{n+1})&=&\mathbf{E_1}(t^{n}) -  \sum_{j_1, j_2} \int_{t^n}^{t^{n+1}} ({\bf 1}-\Pi) e^{v_{1, j_1}A (t-t^n)} dtv_{1, j_1}\mathbf{f}_{j_1,j_2}(t^{n}) \Delta v_1 \Delta  v_2\\
	&=& \mathbf{E_1}(t^{n}) + \Delta v_1 \Delta v_2(A+\Pi)^{-1}\sum_{j_1, j_2}({\bf 1}-  e^{ v_{1, j_1} A\Delta t} )\mathbf{f}_{j_1,j_2}(t^{n}), 
\end{eqnarray*}
from which we can thus deduce the last line of the exponential of the matrix.  

\paragraph{Computation of ${}_e{\cal B}_{j_2}, {}_e{\cal E}_{2, j_2}$: solve $\mathbf{B}, \mathbf{E_2}$\\}
Next, we focus on the calculation of $\mathbf{E_2}(t^{n+1})$ and $\mathbf{B}(t^{n+1})$ from known initial conditions 
$\mathbf{E_2}(t^{n})$ and $\mathbf{B}(t^{n})$. Let write down the equations  
for $(\mathbf{E_2}, \mathbf{B})(t)$ with  $t\in [t^n, t^{n+1}]$ 
\begin{eqnarray*}
	\frac{d}{dt} \mathbf{E_2}(t)&=& A \mathbf{B}(t) - \sum_{j_1, j_2} ({\bf 1}-\Pi) e^{v_{1, j_1} A(t-t^n)} v_{2, j_1}\mathbf{f}_{j_1,j_2}(t^{n}) \Delta v_1 \Delta  v_2 \\
	\frac{d}{dt} \mathbf{B}(t)&=&  A  \mathbf{E_2}(t) 
\end{eqnarray*}
which can be rewritten as $\frac{dU}{dt} =MU + R$ with $U(t)=(\mathbf{E_2}(t), \mathbf{B}(t))$ and 
\begin{eqnarray}
M&=&
\left( 
\begin{array}{ll}
		{\bf 0}_{\tilde{N},\tilde{N}}& A \\ 
	A & {\bf 0}_{\tilde{N},\tilde{N}}
\end{array}
\right), \;\; 
R(t)=\left( 
\begin{array}{ll} 
R_1(t)\\
{\bf 0}_{\tilde{N},1}
\end{array}
\right),   \nonumber\\
\label{def_M}
 R_1(t) &=& - \sum_{j_1, j_2}({\bf 1}-\Pi) e^{v_{1, j_1}A (t-t^n)} v_{2, j_1}\mathbf{f}_{j_1,j_2}(t^{n}) \Delta v_1 \Delta  v_2 
\end{eqnarray}
Thus, one can write the variation of constant formula 
\begin{equation}
	\label{duhamel_dg}
	U(t^{n+1}) = e^{M\Delta t} U(t^n) + \int_{t^n}^{t^{n+1}} e^{-M(t-t^{n+1})} R(t) dt.  
\end{equation}
First, $e^{M \Delta t}$ reads as, using its definition \eqref{def_M} 
$$
e^{M\Delta t} 
=
\frac{1}{2}\left(
\begin{array}{ll}
{e^{A\Delta t}+e^{-A\Delta t}} &{e^{A\Delta t}-e^{-A\Delta t}} \\
{e^{A\Delta t}-e^{-A\Delta t}}& {e^{A\Delta t}+e^{-A\Delta t}}
\end{array}
\right).  
$$
Second, one has to compute the integral  term in \eqref{duhamel_dg}
\begin{eqnarray*}
	\int_{t^n}^{t^{n+1}} e^{-M(t-t^{n+1})} R(t)dt &=& 
	\frac{1}{2} \left(
	\begin{array}{ll}
	 \int_{t^n}^{t^{n+1}} \Big[e^{-A(t-t^{n+1})}+e^{A(t-t^{n+1})}\Big]  R_1(t)dt \\
	  \int_{t^n}^{t^{n+1}} \Big[e^{-A(t-t^{n+1})}-e^{A(t-t^{n+1})}\Big] R_1(t) dt\\
	\end{array}
	\right)\nonumber\\
	&=& - \frac{1}{2} \left(
	\begin{array}{ll}
		\int_{t^n}^{t^{n+1}} \sum_{j_1, j_2} \Big[ e^{-A(t-t^{n+1})}+e^{A(t-t^{n+1})}\Big] e^{v_{1, j_1} A(t-t^n)} v_{2, j_1}{f}^{n}_{k,j_1,j_2} \Delta v_1 \Delta  v_2 dt  \\
		\int_{t^n}^{t^{n+1}} \sum_{j_1, j_2} \Big[ e^{-A(t-t^{n+1})}-e^{A(t-t^{n+1})} \Big] e^{v_{1, j_1} A(t-t^n)} v_{2, j_1}{f}^{n}_{k,j_1,j_2} \Delta v_1 \Delta  v_2 dt
	\end{array}
	\right)\nonumber\\
 &=& - \left(
	\begin{array}{ll} 
 \sum_{j_1, j_2}  {\cal I}_1 \; v_{2, j_1}{f}^{n}_{k,j_1,j_2} \Delta v_1 \Delta  v_2   \\
 \sum_{j_1, j_2}  {\cal I}_2 \; v_{2, j_1}{f}^{n}_{k,j_1,j_2} \Delta v_1 \Delta  v_2 
 \end{array}
	\right)
\end{eqnarray*}
where ${\cal I}_1, {\cal I}_2$ are given by 
\begin{eqnarray*}
{\cal I}_1	&=& \frac{1}{2}\int_{t^n}^{t^{n+1}} \Big[ e^{-A(t-t^{n+1})}+e^{A(t-t^{n+1})}  \Big] e^{v_{1, j_1} A(t-t^n)} dt  \nonumber\\
&=& \frac{(A+\Pi)^{-1}e^{A\Delta t} }{2(1-v_{1, j_1})} - \frac{(A+\Pi)^{-1}e^{-A\Delta t} }{2(1+v_{1, j_1})} - \frac{(A+\Pi)^{-1} v_{1, j_1} e^{A \Delta t  v_{1, j_1}} }{(1-v_{1, j_1}^2)}, \nonumber\\
{\cal I}_2	&=&\frac{1}{2} \int_{t^n}^{t^{n+1}} \Big[ e^{-A(t-t^{n+1})}-e^{A(t-t^{n+1})} \Big]  e^{v_{1, j_1} A(t-t^n)} dt \nonumber\\
&=& \frac{(A+\Pi)^{-1} e^{A\Delta t} }{2(1-v_{1, j_1})} + \frac{(A+\Pi)^{-1} e^{-A\Delta t} }{2(1+v_{1, j_1})} - \frac{(A+\Pi)^{-1} e^{A \Delta t  v_{1, j_1}}   }{(1-v_{1, j_1}^2)}. 
\end{eqnarray*}
Inserting these calculations in \eqref{duhamel_dg} leads to the following expression for $\mathbf{E_2}(t^{n+1})$ and $\mathbf{B}(t^{n+1})$
\begin{eqnarray*}
	\mathbf{E_2}(t^{n+1}) &=& \frac{1}{2}(e^{A\Delta t}+e^{-A\Delta t} ) \mathbf{E_2}(t^n) + \frac{1}{2}( e^{A\Delta t} - e^{-A\Delta t} ) \mathbf{B}(t^n)+ \Delta v_1 \Delta v_2(A+\Pi)^{-1} \sum_{j_1, j_2} v_{2, j_2} \beta_{j_1} \mathbf{f}_{j_1,j_2}(t^n) \nonumber\\
 	\mathbf{B}(t^{n+1}) &=& \frac{1}{2}(e^{A\Delta t}-e^{-A\Delta t} ) \mathbf{E_2}(t^n) + \frac{1}{2}( e^{A\Delta t} + e^{-A\Delta t} ) \mathbf{B}(t^n)+ \Delta v_1 \Delta v_2(A+\Pi)^{-1} \sum_{j_1, j_2} v_{2, j_2} \alpha_{j_1} \mathbf{f}_{j_1,j_2}(t^n)   
\end{eqnarray*}
where $\vec{\beta}=[\beta_{1},\beta_{2},\dots ,\beta_{N_{v_1}}]\in \mathbb{M}_{\tilde{N}\times\tilde{N_1}}(\mathbb{R})$ 
and $\vec{\alpha}=[\alpha_{1},\alpha_{2},\dots ,\alpha_{N_{v_1}}]\in \mathbb{M}_{\tilde{N}\times\tilde{N_1}}(\mathbb{R})$ 
are given by 
\begin{eqnarray*}
\beta_{j_1}&=&   \frac{-e^{A\Delta t} }{2(1-v_{1, j_1})} + \frac{e^{-A\Delta t} }{2(1+v_{1, j_1})} + \frac{v_{1, j_1} e^{A \Delta t  v_{1, j_1}} }{(1-v_{1, j_1}^2)}  \nonumber\\
\alpha_{j_1}&=& \frac{-e^{A\Delta t} }{2(1-v_{1, j_1})} - \frac{e^{-A\Delta t} }{2(1+v_{1, j_1})} + \frac{ e^{A \Delta t  v_{1, j_1}} }{(1-v_{1, j_1}^2)}, 
\end{eqnarray*}
which conclude the proof. 

\end{proof}

Shared the same spirit with Vlasov-Amp\`ere equation, we have the following discretized Poisson equation and error estimate for Vlasov-Maxwell equation with exponential Lawson RK DG FD discretization.

\begin{pro}
The exponential DG method \eqref{expo_DG_1} where the exponential is given in Prop \ref{prop_expo_2dv} (and its generalization to high order Lawson Runge-Kutta) satisfied by $U^n=(\vec{\bf{f}}, {\bf B}, {\bf E}_2, {\bf E}_1)^n$ preserves the following Poisson equation
$$
(A+\Pi)\mathbf{E}_1^n=-\sum_{j_1, j_2}({\bf 1}-\Pi)\mathbf{f}^n_{j_1,j_2}\Delta v_1\Delta v_2, \;\;\; \forall n\in \mathbb{N}^\star, 
$$
provided that it is satisfied at the initial time $n=0$. Here, $A$ is the DG matrix given by \eqref{dg_matrix}-\eqref{expression_MD}, 
$\Pi$ the orthogonal projection onto Ker$(A)$ and ${\bf{1}}$ 
is the identity matrix of size $\tilde{N}=(k+1)N_x$. 
\end{pro}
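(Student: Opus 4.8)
The plan is to argue exactly as in the Vlasov-Amp\`ere case, proceeding by induction on $n$ and exploiting that the $\mathbf{E}_1$ component is completely decoupled from the Maxwell unknowns $(\mathbf{B},\mathbf{E}_2)$. Inspecting the last block-row of $L$ in \eqref{L_matrix2D} and of $\exp(L\Delta t)$ in Proposition \ref{prop_expo_2dv}, the update of $\mathbf{E}_1^{n+1}$ involves only the distribution function blocks $\mathbf{f}_{j_1,j_2}^n$ (through ${}_e{\cal E}_1$) and $\mathbf{E}_1^n$ itself (through the identity block), the columns corresponding to $\mathbf{B}$ and $\mathbf{E}_2$ being zero. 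Hence the Maxwell coupling plays no role, and the discrete Poisson relation for $\mathbf{E}_1$ may be treated on the $(\vec{\mathbf{f}},\mathbf{E}_1)$ subsystem alone, which is structurally identical to the Vlasov-Amp\`ere system up to replacing the single velocity index $j$ by the pair $(j_1,j_2)$, $v_j$ by $v_{1,j_1}$, $\Delta v$ by $\Delta v_1\Delta v_2$, and the nonlinear flux $\tilde{\mathbf{E}}\mathbf{{\mathcal{D}f}}$ by $(\tilde{\mathbfcal{F}}{\cal D}\mathbf{f})_{j_1,j_2}$.

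Concretely, I would first extract from \eqref{expo_DG_1} and Proposition \ref{prop_expo_2dv} the two component updates
\begin{equation*}
\mathbf{f}_{j_1,j_2}^{n+1} = e^{v_{1,j_1}A\Delta t}\mathbf{f}_{j_1,j_2}^n - \Delta t\, e^{v_{1,j_1}A\Delta t}(\tilde{\mathbfcal{F}}{\cal D}\mathbf{f})_{j_1,j_2}^n,
\end{equation*}
and, using the explicit form of ${}_e{\cal E}_1$,
\begin{equation*}
\mathbf{E}_1^{n+1} = \mathbf{E}_1^n + \Delta v_1\Delta v_2(A+\Pi)^{-1}\sum_{j_1,j_2}({\bf 1} - e^{v_{1,j_1}A\Delta t})\big(\mathbf{f}_{j_1,j_2}^n - \Delta t(\tilde{\mathbfcal{F}}{\cal D}\mathbf{f})_{j_1,j_2}^n\big).
\end{equation*}
Since $e^{v_{1,j_1}A\Delta t}$ acts as the identity on $\mathrm{Ker}(A)$, the operator $({\bf 1} - e^{v_{1,j_1}A\Delta t})$ annihilates $\mathrm{Ker}(A)$, so one may freely insert the projection, writing $({\bf 1} - e^{v_{1,j_1}A\Delta t}) = ({\bf 1}-\Pi)({\bf 1} - e^{v_{1,j_1}A\Delta t})$.

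Next I would split the $\Delta t$-term into two contributions. The first, namely $-\Delta t\Delta v_1\Delta v_2(A+\Pi)^{-1}({\bf 1}-\Pi)\sum_{j_1,j_2}(\tilde{\mathbfcal{F}}{\cal D}\mathbf{f})_{j_1,j_2}^n$, vanishes because ${\cal D}$ is conservative, i.e. the velocity-grid sum $\sum_{j_1,j_2}(\tilde{\mathbfcal{F}}{\cal D}\mathbf{f})_{j_1,j_2}$ telescopes to zero; this is the $1d_x$-$2d_v$ analogue of the identity $\sum_j {\cal D}\mathbf{f}_j = 0$ used in the Vlasov-Amp\`ere derivation. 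For the second contribution I would substitute $\Delta t\,e^{v_{1,j_1}A\Delta t}(\tilde{\mathbfcal{F}}{\cal D}\mathbf{f})_{j_1,j_2}^n = e^{v_{1,j_1}A\Delta t}\mathbf{f}_{j_1,j_2}^n - \mathbf{f}_{j_1,j_2}^{n+1}$ from the $\mathbf{f}$-update. Collecting terms, the exponential factors cancel exactly as in \eqref{eq:Poissoneequationpreserving}, leaving the telescoping identity
\begin{equation*}
\mathbf{E}_1^{n+1} = \mathbf{E}_1^n + \Delta v_1\Delta v_2(A+\Pi)^{-1}\sum_{j_1,j_2}\big(({\bf 1}-\Pi)\mathbf{f}_{j_1,j_2}^n - ({\bf 1}-\Pi)\mathbf{f}_{j_1,j_2}^{n+1}\big).
\end{equation*}
Applying $(A+\Pi)$ to both sides and invoking the induction hypothesis $(A+\Pi)\mathbf{E}_1^n = -\sum_{j_1,j_2}({\bf 1}-\Pi)\mathbf{f}_{j_1,j_2}^n\Delta v_1\Delta v_2$ then yields the same relation at level $n+1$, completing the induction.

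The step I expect to require the most care is the conservation identity $\sum_{j_1,j_2}({\bf 1}-\Pi)(\tilde{\mathbfcal{F}}{\cal D}\mathbf{f})_{j_1,j_2}^n = 0$ in the two-dimensional velocity setting: one must check that the chosen discretization $\tilde{\mathbfcal{F}}{\cal D}$ of ${\cal F}\cdot\nabla_v f$, with ${\cal F} = (E_1 + Bv_2, E_2 - Bv_1)$, indeed sums to zero over the truncated velocity grid (the discrete counterpart of $\int \mathrm{div}_v({\cal F}f)\,dv = 0$, valid since $\mathrm{div}_v{\cal F} = 0$ and $f$ is compactly supported in $v$). Everything else is a verbatim transcription of the Vlasov-Amp\`ere argument, with the Maxwell block contributing nothing to the $\mathbf{E}_1$ equation.
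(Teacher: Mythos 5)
Your proposal is correct and follows essentially the same route as the paper's proof: the same component-wise updates for $\mathbf{f}_{j_1,j_2}^{n+1}$ and $\mathbf{E}_1^{n+1}$, the same insertion of $({\bf 1}-\Pi)$, the same splitting into a term killed by the conservativity of ${\cal D}$ and a term rewritten via the $\mathbf{f}$-update, and the same telescoping identity closing the induction. The only additions on your side — making explicit the decoupling of $\mathbf{E}_1$ from $(\mathbf{B},\mathbf{E}_2)$ in $\exp(L\Delta t)$ and flagging the two-dimensional velocity-sum identity $\sum_{j_1,j_2}(\tilde{\mathbfcal{F}}{\cal D}\mathbf{f})_{j_1,j_2}^n=0$ as the delicate point — are points the paper uses implicitly without comment.
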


\begin{proof}
We present the proof for first order Lawson case (forward Euler), the proof can be generalized to arbitrary explicit Runge-Kutta scheme. First, we assume 
the Poisson equation 
$(A+\Pi)\mathbf{E}_1^0=-\sum_{j_1, j_2}({\bf 1}-\Pi)\mathbf{f}^0_{j_1,j_2}\Delta v_1\Delta v_2$ holds at the initial time. 

Next, from the scheme \eqref{expo_DG_1} and Prop \ref{prop_expo_2dv}, we have 
$$
\mathbf{f}_{j_1,j_2}^{n+1} = e^{v_{1, j_1} A\Delta t}\mathbf{f}_{j_1,j_2}^n-\Delta te^{v_{1, j_1} A\Delta t}(\tilde{\mathbfcal{F}} {\cal D} \mathbf{f})_{j_1,j_2}^n. 
$$
Regarding the $\mathbf{E}_1$ component, we have 
\begin{equation}
	\begin{aligned}
		\mathbf{E}^{n+1}_1&=\mathbf{E}^{n}_1 + \Delta v_1 \Delta v_2(A+\Pi)^{-1}\sum_{j_1, j_2}({\bf 1}-  e^{ v_{1, j_1} A\Delta t} )\mathbf{f}_{j_1,j_2}^{n}\nonumber\\
  &-\Delta t\Delta v_1 \Delta v_2(A+\Pi)^{-1}\sum_{j_1, j_2}({\bf 1} -  e^{ v_{1, j_1} A\Delta t} )(\tilde{\mathbfcal{F}}{\cal D} \mathbf{f})_{j_1,j_2}^n\\
  &=\mathbf{E}^{n}_1 + \Delta v_1 \Delta v_2(A+\Pi)^{-1}\sum_{j_1, j_2}({\bf 1}-\Pi)({\bf 1}-  e^{ v_{1, j_1} A\Delta t} )\mathbf{f}_{j_1,j_2}^{n}\nonumber\\
  &-\Delta t\Delta v_1 \Delta v_2(A+\Pi)^{-1}\sum_{j_1, j_2}({\bf 1}-\Pi)({\bf 1} -  e^{ v_{1, j_1} A\Delta t} )(\tilde{\mathbfcal{F}}{\cal D} \mathbf{f})_{j_1,j_2}^n\\
	&=\mathbf{E}^{n}_1 + \Delta v_1 \Delta v_2(A+\Pi)^{-1}\sum_{j_1, j_2}({\bf 1}-\Pi)({\bf 1}-  e^{ v_{1, j_1} A\Delta t} )\mathbf{f}_{j_1,j_2}^{n}\\
 &-\Delta t\Delta v_1 \Delta v_2(A+\Pi)^{-1}({\bf 1}-\Pi)\sum_{j_1, j_2}(\tilde{\mathbfcal{F}}{\cal D} \mathbf{f})_{j_1,j_2}^n\\
 &+\Delta v_1 \Delta v_2(A+\Pi)^{-1}\sum_{j_1, j_2}({\bf 1}-\Pi)( e^{v_{1, j_1} A\Delta t}\mathbf{f}_{j_1,j_2}^n-\mathbf{f}_{j_1,j_2}^{n+1})\\
 \label{eq:Poissoneequationpreserving}
 &=\mathbf{E}^{n}_1 + \Delta v_1 \Delta v_2(A+\Pi)^{-1}\sum_{j_1, j_2}(({\bf 1}-\Pi)\mathbf{f}_{j_1,j_2}^{n}-({\bf 1}-\Pi)\mathbf{f}_{j_1,j_2}^{n+1}).
	\end{aligned}
\end{equation} 
By induction, if the discrete Poisson equation is satisfied at iteration $n$, then it is satisfied at iteration $n+1$ and the proof is complete.

\end{proof}

\section{Vlasov-Maxwell 2dx-2dv}
\label{sec:DG2Dx}
We finally consider the 2dx-2dv Vlasov-Maxwell model satisfied by \\ 
$f(t, x, y, v_1, v_2), E_1(t, x, y), E_2(t, x, y), B(t, x, y)$, with $t\geq 0, (x,y)\in [0,L_x]\times [0, L_y]$ 
and $(v_1, v_2)\in \mathbb{R}^2$
\begin{equation}
\label{eq:model2dx}
	\left\{
	\begin{aligned}
		&\partial_t f+v_1\partial_x f + v_2\partial_y f+ E_1 \partial_{v_1} f +E_2 \partial_{v_2} f +B(v_2 \partial_{v_1} f -v_1 \partial_{v_2} f )=0, \\
		&\partial_t B =\partial_y E_1 - \partial_x E_2, \\
		&\partial_t E_1= \partial_y B - \int_ {\mathbb{R}^2} v_1f \mathrm{d}v_1\mathrm{d}v_2+\bar{J}_1,\\
		&\partial_t E_2=-\partial_x B-\int_ {\mathbb{R}^2} v_2f \mathrm{d}v_1\mathrm{d}v_2+\bar{J}_2,\\ 
  & \partial_x E_1+\partial_y E_2 = \int_ {\mathbb{R}^2}f \mathrm{d}v_1\mathrm{d}v_2 -\bar{\rho}, \;\;\; \partial_x B+\partial_y B=0,  
	\end{aligned}
\right.
\end{equation} 
with initial conditions $(f_0(x,y,v_1,v_2), E^0_1(x,y), E_2^0(x,y), B(x,y))$ such that the Poisson equation is satisfied $\partial_x E_1^0+\partial_y E_2^0 = \int_ {\mathbb{R}^2}f_0 \mathrm{d}v_1\mathrm{d}v_2 -\bar{\rho}$ and periodic boundary conditions are considered in space. here, $\bar{J}_i=\frac{1}{L_x L_y}\int_{L_x\times L_y} \int_{\mathbb{R}^2} v_i f dx dy dv_1 dv_2$, $\bar{\rho} = \frac{1}{L_x L_y}\int_{L_x\times L_y} \int_{\mathbb{R}^2} f dx dy dv_1 dv_2$, $E=(E_1, E_2), B=(B_1, B_2), \nabla=(\partial_x, \partial_y)$. 

\subsection{Exponential DG discretization}
Here we apply 2D DG method in $(x,y)$ direction and consider the discretization on Cartesian meshes with a rectangular triangulation $I_i \times I_j$. We define the space $V_h$ as the space of tensor product piece-wise polynomials of degree at most $k$ in each variable on every element, i.e. $V_h^k = \{ v_h:  v_h|_{I_i\times I_j} \in Q^k(I_i\times I_j) \}$, where $Q^k(I_i\times I_j)$ is the space of tensor products of one dimensional polynomials of degree up to $k$.


We follow the lines of the above subsections: we use a DG method in the 2D space direction $(x,y)$ (with $N_x$ (resp.  $N_y$) cells in direction $x$ (resp. $y$) and a grid in the velocity direction $v_{\ell,j_\ell} = v_{\ell, \min} + j_\ell \Delta v_\ell, \ell=1, 2, \;\; j_\ell=1, \dots, N_{v}$. 
The 2D DG approximation for $f$ is represented as 
(for $j_1, j_2=1, \dots, N_v$) 
$$
f(t,x,y, v_{j_1}, v_{j_2})\approx f_h(t,x,y, v_{j_1}, v_{j_2})= \sum_{i=1}^{N_x}\sum_{j=1}^{N_y} \sum_{m=0}^{k}\sum_{n =0}^{k}{f}_{ij}^{mn}(t,v_{j_1},v_{j_2}) \xi_i^m(x)\xi_j^n(y).
$$ 
For simplicity, we only present the 2D DG discretization for linear part of equation of $f$ obtained by multiplying the 
Vlasov equation by $\xi^\ell(x)\xi^s(y)$ (for $\ell=0, \dots, k$ and $s=0, \dots, k$) on $(x,y)\in I_i\times I_j$ 
(for $i=1,2,...N_x$ and $j=1,2,...N_y$):
\begin{equation}
\begin{aligned}
&\sum_{m=0}^{k}\sum_{n=0}^{k}\left[\partial_t{f}_{ij}^{mn}(t,v_{j_1},v_{j_2}) (\xi^m, \xi^\ell)_{I_i}(\xi^n, \xi^s)_{I_j}\right]\\
  & - \sum_{m=0}^{k}\sum_{n=0}^{k}\left[v_{j_1}{f}_{ij}^{mn}(t,v_{j_1},v_{j_2})(\xi^m,\partial_x\xi^\ell)_{I_i}(\xi^n, \xi^s)_{I_j}+v_{j_2}{f}_{ij}^{mn}(t,v_{j_1},v_{j_2})(\xi^m,\xi^\ell)_{I_i}(\xi^n, \partial_y\xi^s)_{I_j}\right] \\
  &+\sum_{m=0}^{k}\sum_{n=0}^{k}\left[(v_{j_1}\Big[\{f_h(t, x, y, v_{j_1}, v_{j_2}\}\xi^\ell\Big]^{i+\frac{1}{2}}_{i-\frac{1}{2}},\xi^s)_{I_j} +(v_{j_2}\Big[\{f_h(t, x, y, v_{j_1}, v_{j_2}\}\xi^s\Big]^{j+\frac{1}{2}}_{j-\frac{1}{2}},\xi^\ell)_{I_i}\right]=0, 
\end{aligned}
\end{equation}
where we used the central fluxes in $x$ and $y$, that is for the $x$ direction  \\
$\{f_h(t, x, y,v_{j_1}, v_{j_2})\}|_{x_{i\pm 1/2}} = \frac{1}{2}(f_h(t,x_{i\pm 1/2}^+ ,y,v_{j_1}, v_{j_2})+f_h(t,x_{i\pm 1/2}^-,y,v_{j_1}, v_{j_2}))$. 

We consider $\mathbf{f}_{j_1,j_2}\in \mathbb{R}^{(k+1)^2N_xN_y}$ the vector containing the degree of freedom $f^{m,n}_{i,j}$ 
\begin{equation}
\label{fDG2D}
\mathbf{f}_{j_1,j_2} = [f^{0,0}_{1,1},f^{1,0}_{1,1}, \dots, f^{k,0}_{1,1}, \dots, f^{k,0}_{N_x,1}, f^{0,1}_{1,1},\dots, f^{k,k}_{N_x,N_y}]_{j_1,j_2}^T
\end{equation}
and $\mathbf{E_1}, \mathbf{E_2}, \mathbf{B}\in \mathbb{R}^{(k+1)^2 N_x N_y}$ the vectors (defined as \eqref{fDG2D}) containing the DG degree of freedom of  the electromagnetic fields 
$(E_1, E_2, B)$. Finally, we introduce $(\tilde{\mathbfcal F} {\cal D} \mathbf{f})_{j_1,j_2}$ the DG approximation of the nonlinear term $({\cal F} \cdot \nabla_v f)(v_{j_1},v_{j_2}), \ \mbox{ with }  {\cal F} =({E_1}+ {B} v_2, {E_2}  -{B} v_1)$ using similar techniques as in the 1dx case. With these notations, we have the following semi-discretized scheme 
\begin{equation}
	\label{eq:model_DG2Dx}
	\left\{
	\begin{aligned}
		&\partial_t \mathbf{f}_{j_1,j_2}=({\bf 1}_y \otimes v_{1,j_1}A^x)\mathbf{f}_{j_1,j_2}+(v_{2,j_2}A^y \otimes {\bf 1}_x)\mathbf{f}_{j_1,j_2}- 
  (\tilde{\mathbfcal F} {\cal D} \mathbf{f})_{j_1,j_2}\\ 
		&\partial_t \mathbf{B} =-(A^y \otimes {\bf 1}_x) \mathbf{E_1} + ({\bf 1}_y \otimes A^x)  \mathbf{E_2}, \\
		&\partial_t \mathbf{E_1}=-(A^y\otimes {\bf 1}_x) \mathbf{B}-\sum_{j_1, j_2} v_{1, j_1} {\bf P}\mathbf{f}_{j_1,j_2} \Delta v_1\Delta  v_2,\\
		&\partial_t \mathbf{E_2}=({\bf 1}_y\otimes A^x)\mathbf{B}- \sum_{j_1, j_2} v_{2, j_2} {\bf P}\mathbf{f}_{j_1,j_2} \Delta v_1\Delta v_2,
	\end{aligned}
	\right.
\end{equation} 
where $A^x\in\mathbb{M}_{(k+1)N_x, (k+1)N_x}(\mathbb{R})$ and $A^y\in\mathbb{M}_{(k+1)N_y, (k+1)N_y}(\mathbb{R})$ are the matrices coming from the DG semi-discretization in each space direction as before, $\otimes$ is the Kronecker product, 
${\bf P}=({\bf 1}_y-\Pi_y) \otimes ({\bf 1}_x - \Pi_x)$ 
with ${\bf 1}_{x}$ (resp. ${\bf 1}_{y})$ the identity matrix of size $(k+1)N_{x}$ (resp. $(k+1)N_{y}$) and $\Pi_{x}$ (resp. $\Pi_{y}$) the projection onto Ker$(A^{x})$ (resp. Ker$(A^{y})$).

Before discussing the time discretization, we prove the following proposition.  
\begin{pro}
\label{pro_DG2D_poisson}
The semi-discretized system \eqref{eq:model_DG2Dx} satisfied by $({{\bf f}}_{j_1,j_2}, {\bf B}, {\bf E}_1, {\bf E}_2)(t)$ preserves the following discretized Poisson equation  
$$
({\bf 1}_y \otimes A^x) \mathbf{E_1}(t)+(A^y \otimes {\bf 1}_x) \mathbf{E_2}(t)=-\sum_{j_1, j_2}{\bf P}\mathbf{f}_{j_1,j_2}(t) \Delta v_1\Delta v_2, \;\;\; \forall t>0, 
$$ 
provided that it is satisfied at time $t=0$.
\end{pro}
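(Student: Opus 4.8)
The plan is to work with the time-differentiated defect. Define
$$
G(t) = ({\bf 1}_y \otimes A^x)\mathbf{E_1}(t) + (A^y\otimes {\bf 1}_x)\mathbf{E_2}(t) + \sum_{j_1,j_2}{\bf P}\,\mathbf{f}_{j_1,j_2}(t)\,\Delta v_1\Delta v_2,
$$
so that the claimed discrete Poisson equation is exactly $G(t)=0$. Since $G(0)=0$ holds by hypothesis, it suffices to prove $G'(t)\equiv 0$, and then $G(t)\equiv 0$ follows by a continuity (or integration) argument. First I would differentiate $G$ and substitute the four evolution equations of \eqref{eq:model_DG2Dx}, then sort the resulting terms into three groups: the ones proportional to $\mathbf{B}$, the linear transport terms in $\mathbf{f}$, and the nonlinear term $(\tilde{\mathbfcal{F}}{\cal D}\mathbf{f})_{j_1,j_2}$.

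The $\mathbf{B}$-group arises only from $\partial_t\mathbf{E_1}$ and $\partial_t\mathbf{E_2}$ and equals $-({\bf 1}_y\otimes A^x)(A^y\otimes {\bf 1}_x)\mathbf{B} + (A^y\otimes {\bf 1}_x)({\bf 1}_y\otimes A^x)\mathbf{B}$. By the mixed-product rule for the Kronecker product both terms equal $(A^y\otimes A^x)\mathbf{B}$, so they cancel identically; this is the discrete analogue of $\partial_x\partial_y B-\partial_y\partial_x B=0$ and uses nothing beyond bilinearity of $\otimes$. Next I would treat the linear transport terms, which combine the first two terms of $\partial_t\mathbf{f}_{j_1,j_2}$ (note $({\bf 1}_y\otimes v_{1,j_1}A^x)=v_{1,j_1}({\bf 1}_y\otimes A^x)$ and similarly for the $y$-block) with the $\mathbf{f}$-dependent sources in $\partial_t\mathbf{E_1},\partial_t\mathbf{E_2}$. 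After collecting them, one is left with the commutators $\sum_{j_1,j_2} v_{1,j_1}\big[{\bf P}({\bf 1}_y\otimes A^x)-({\bf 1}_y\otimes A^x){\bf P}\big]\mathbf{f}_{j_1,j_2}$ and $\sum_{j_1,j_2} v_{2,j_2}\big[{\bf P}(A^y\otimes {\bf 1}_x)-(A^y\otimes {\bf 1}_x){\bf P}\big]\mathbf{f}_{j_1,j_2}$. Writing ${\bf P}=({\bf 1}_y-\Pi_y)\otimes({\bf 1}_x-\Pi_x)$ and again applying the mixed-product rule, these vanish as soon as $A^x\Pi_x=\Pi_x A^x=0$ and $A^y\Pi_y=\Pi_y A^y=0$, that is, as soon as each orthogonal projector onto $\mathrm{Ker}(A^x)$ (resp.\ $\mathrm{Ker}(A^y)$) commutes with $A^x$ (resp.\ $A^y$). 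These commutation relations are precisely the ones underlying the one-dimensional construction of Section~\ref{subsection:DG} (range of $A$ orthogonal to its kernel), so the whole transport block cancels.

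After these cancellations the only survivor is $-{\bf P}\Big(\sum_{j_1,j_2}(\tilde{\mathbfcal{F}}{\cal D}\mathbf{f})_{j_1,j_2}\Big)\Delta v_1\Delta v_2$, and the crux of the argument is to show that this term is zero. This is the discrete counterpart of $\int_{\mathbb{R}^2}\nabla_v\!\cdot({\cal F}f)\,dv_1dv_2=0$: I expect this to be the main obstacle, since it requires combining the telescoping (conservation) property of the central finite-difference operator ${\cal D}$ in each velocity direction with the fact that the force ${\cal F}=(E_1+Bv_2,\,E_2-Bv_1)$ is divergence-free in velocity, $\partial_{v_1}(E_1+Bv_2)+\partial_{v_2}(E_2-Bv_1)=0$, so that the rotational magnetic contribution does not spoil the discrete conservation $\sum_{j_1,j_2}(\tilde{\mathbfcal{F}}{\cal D}\mathbf{f})_{j_1,j_2}=0$. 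This is the exact two-dimensional-velocity analogue of the conservative property of ${\cal D}$ invoked in the Vlasov-Amp\`ere proof, and once it is established the identity $G'(t)=0$ follows and, together with $G(0)=0$, completes the proof.
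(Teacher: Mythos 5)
Your proof is correct and follows essentially the same route as the paper: differentiate the discrete Poisson defect in time, cancel the $\mathbf{B}$-contributions via the Kronecker mixed-product identity $(A^y\otimes{\bf 1}_x)({\bf 1}_y\otimes A^x)=A^y\otimes A^x=({\bf 1}_y\otimes A^x)(A^y\otimes{\bf 1}_x)$, absorb the transport terms by commuting ${\bf P}$ through ${\bf 1}_y\otimes A^x$ and $A^y\otimes{\bf 1}_x$, kill the nonlinear term through the conservativity $\sum_{j_1,j_2}(\tilde{\mathbfcal{F}}{\cal D}\mathbf{f})_{j_1,j_2}=0$, and integrate in time. The only difference is presentational: you state explicitly the two ingredients the paper leaves implicit, namely the relations $A\Pi=\Pi A=0$ behind the commutation with ${\bf P}$, and the fact that ${\cal F}_1$ (resp.\ ${\cal F}_2$) is independent of $v_1$ (resp.\ $v_2$) so that the telescoping of ${\cal D}$ applies componentwise.
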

\begin{proof}
Let derive with respect to time the left hand side of the equality to get 
    \begin{eqnarray*}
\partial_t(({\bf 1}_y \otimes A^x) \mathbf{E_1}+(A^y \otimes {\bf 1}_x) \mathbf{E_2})&=& ({\bf 1}_y \otimes A^x) \partial_t\mathbf{E_1}+(A^y \otimes {\bf 1}_x) \partial_t\mathbf{E_2}\\
        &=&({\bf 1}_y \otimes A^x) (-(A^y \otimes {\bf 1}_x) \mathbf{B}-\sum_{j_1, j_2} v_{1, j_1} {\bf P}\mathbf{f}_{j_1,j_2} \Delta v_1\Delta  v_2)\\
        &&+(A^y \otimes {\bf 1}_x) (({\bf 1}_y \otimes A^x)\mathbf{B}- \sum_{j_1, j_2} v_{2, j_2} {\bf P}\mathbf{f}_{j_1,j_2} \Delta v_1\Delta v_2)\\
        &=& \Big[(A^y \otimes {\bf 1}_x) ({\bf 1}_y \otimes A^x) - ({\bf 1}_y \otimes A^x)(A^y \otimes {\bf 1}_x)\Big]\mathbf{B} \\
        && -({\bf 1}_y \otimes A^x) \sum_{j_1, j_2} v_{1, j_1} {\bf P}\mathbf{f}_{j_1,j_2} \Delta v_1\Delta  v_2) \\
        &&- (A^y \otimes {\bf 1}_x)\sum_{j_1, j_2} v_{2, j_2} {\bf P}\mathbf{f}_{j_1,j_2} \Delta v_1\Delta v_2)\\
        &=&-\sum_{j_1, j_2}   \Big(\partial_t ({\bf P}\mathbf{f}_{j_1,j_2})+{\bf P}( \tilde{\mathbfcal F}{\cal D} \mathbf{f})_{j_1,j_2}\Big)\Delta v_1\Delta  v_2\\
        &=&-\partial_t \Big(\sum_{j_1, j_2} {\bf P}\mathbf{f}_{j_1,j_2} \Delta v_1\Delta  v_2\Big), 
\end{eqnarray*} 
where we used the identities 
\begin{eqnarray*}
(A^y\otimes {\bf 1}_x)({\bf 1}_y\otimes A^x) &=& (A^y{\bf 1}_y)\otimes ({\bf 1}_x A^x) = A^y\otimes A^x\nonumber\\ 
({\bf 1}_y \otimes A^x)(A^y \otimes {\bf 1}_x) &=& ({\bf 1}_y A^y)\otimes (A^x {\bf 1}_x) = A^y\otimes A^x
\end{eqnarray*} 
to pass from the third to the fourth equality. 
Integrating in time the obtained equality and assuming 
the discrete Poisson equation holds at time $t=0$ leads to 
the result. 
\end{proof}
We end this part by giving some elements on the time discretization. First, in this case, it is difficult to compute the exponential of the linear part. However, we can consider 
the exponential of the ${\bf f}_{j_1, j_2}$ linear part 
(which corresponds to the $(x,y)$ transport). Indeed, 
we observe from \eqref{eq:model_DG2Dx} that this linear part writes 
$$
\partial_t {\bf f}_{j_1, j_2} = \Big[({\bf 1}_y \otimes v_{1,j_1}A^x)+(v_{2,j_2}A^y \otimes {\bf 1}_x)\Big] \mathbf{f}_{j_1,j_2} = \Big[v_{2,j_2}A^y\oplus v_{1,j_1}A^x \Big]{\bf f}_{j_1, j_2},  
$$
where we used the definition of the Kronecker sum $\oplus$. The exact solution can be then written as 
$$
{\bf f}_{j_1, j_2}(t) = \exp\Big( (v_{2,j_2}A^y\oplus v_{1,j_1}A^x) \, t\Big) {\bf f}_{j_1, j_2}(0) = \exp\Big( v_{2,j_2} t A^y\oplus v_{1,j_1} t A^x \Big) {\bf f}_{j_1, j_2}(0).  
$$
It is well known that the exponential of a matrix with Kronecker sum structure is equal to the Kronecker product
of the exponentials that is 
$$
{\bf f}_{j_1, j_2}(t) = \exp( v_{2,j_2}t A^y)\otimes \exp( v_{1,j_1}t A^x) {\bf f}_{j_1, j_2}(0), 
$$
which can be recast using the vec operation as 
\begin{equation}
\label{vec_kron}
\Large{\textgoth{f}}_{j_1,j_2}(t)= \exp(v_{2,j_2} t A^y) \Large{\textgoth{f}}_{j_1,j_2}(0) \exp(\Delta v_{1,j_1}t (A^x)^T),  
\end{equation}
where ${\bf f}_{j_1, j_2}(s)$=vec$(\Large{\textgoth{f}}_{j_1,j_2}(s))$ 
(for $s=0, t$ denotes the vectorization operation which takes the  matrix 
$\Large{\textgoth{f}}_{j_1,j_2}\in\mathbb{M}_{(k+1)N_x, (k+1)N_y}(\mathbb{R})$ as entry and gives the vector ${\bf f}_{j_1, j_2}\in \mathbb{R}^{(k+1)^2N_xN_y}$ as a result. 
This means that the update of $f$ requires matrix-vector 
products operations that only involves 
to assembly exponential of matrices $A^x$ and $A^y$ which are computed from the one-dimensional case (see \eqref{dg_matrix}). 
Moreover, these matrix-vector products calculations can be 
performed in a very efficient way. 
This nice property has been exploited in the literature to design 
efficient routines for computing matrix exponentials \cite{caliari2022mu,croci2023exploiting,munoz2022exploiting}. 

\begin{remark}
The semi-discretized Vlasov-Maxwell system \eqref{eq:model_DG2Dx} 
can be degenerated to 
a semi-discretization of the Vlasov-Poisson system satisfied by 
$(f, E_1, E_2)$. In this case, the Lawson scheme only applies to 
the unknown ${\bf f}$ and then requires the calculation of 
$\exp(v_{2,j_2} t A^y \oplus v_{1,j_1} t A^x)$ which can be performed efficiently thanks to  \eqref{vec_kron}. The update of the electric field is performed using the Poisson equation thanks to the updated 
${\bf f}$. 
\end{remark}

\subsection{Fourier based space discretization}

In this part, we consider Fourier in space combined with finite 
differences in velocity to semi-discretize the Vlasov-Maxwell 
system \eqref{eq:model2dx} and we will see that in this case, 
it will be possible to compute explicitely the exponential 
of the linear part. 

Denoting $\hat{f}_{k_x, k_y, j_1,j_2}$ the Fourier coefficient of $f$ in space and evaluated at the velocity grid introduced previously ($k_x,k_y$ being the Fourier variables), $\hat{E}_{1,k_x,k_y}, \hat{E}_{2,k_x,k_y}, \hat{B}_{k_x,k_y}$ the Fourier coefficients of  the electromagnetic fields 
$({E}_1, {E}_2, {B})$, and introducing 
the force term ${\cal F} =(E_1 + B v_2, E_2  -B v_1)$, 
we get the following semi-discretized scheme 
\begin{equation}
\label{eq:model_fourier2dx}
	\left\{
	\begin{aligned}
		&\partial_t \hat{f}_{k_x,k_y,j_1,j_2}+(v_{1,j_1}i k_x+v_{2,j_2}i k_y) \hat{f}_{k_x,k_y, j_1, j_2}+ \widehat{({\cal F}  {\cal D} f)}_{k_x,k_y,j_1,j_2} = 0, \\
		&\partial_t \hat{B}_{k_x,k_y} =ik_y \hat{E}_{1, k_x,k_y}- ik_x \hat{E}_{2,k_x,k_y}, \\
		&\partial_t \hat{E}_{1,k_x,k_y}=ik_y \hat{B}_{k_x,k_y}-\sum_{j_1, j_2} v_{1, j_1} \hat{f}_{k_x,k_y,j_1,j_2} \Delta v_1\Delta  v_2+\bar{J}_1,\\
		&\partial_t \hat{E}_{2,k_x,k_y}=-ik_x \hat{B}_{k_x,k_y}- \sum_{j_1, j_2} v_{2, j_2} \hat{f}_{k_x,k_y,j_1,j_2} \Delta v_1\Delta v_2+\bar{J}_2,
	\end{aligned}
\right.
\end{equation} 
with the initial conditions $\hat{f}_{k_x,k_y,j_1,j_2}(0), \hat{B}_{k_x,k_y}(0), \hat{E}_{1,k_x,k_y}(0), \hat{E}_{2,k_x,k_y}(0)$ 
satisfying the Poisson equation $ik_x\hat{E}_{1,k_x,k_y}(0) + ik_y\hat{E}_{2,k_x,k_y}(0)=\sum_{j_1,j_2}\hat{f}_{k_x,k_y,j_1,j_2}(0)\Delta v_1\Delta v_2$ for $(k_x,k_y)\neq (0,0)$. 

For the semi-discretized system \eqref{eq:model_fourier2dx}, we have a similar 
proposition as Prop \eqref{pro_DG2D_poisson} in this Fourier case.  
\begin{pro}
The semi-discretized system \eqref{eq:model_fourier2dx} satisfied by $(\hat{f}_{k_x,k_y,j_1,j_2}, \hat{B}_{k_x,k_y}, \hat{E}_{1,k_x,k_y}, \hat{E}_{2,k_x,k_y})(t)$ preserves the following discretized Poisson equation 
$$
ik_x\hat{E}_{1,k_x,k_y}(t)+ik_y\hat{E}_{2,k_x,k_y}(t)=\sum_{j_1, j_2} \hat{f}_{k_x,k_y,j_1,j_2}(t) \Delta v_1\Delta v_2, \;\;\; (k_x, k_y)\neq (0,0), \forall t>0,  
$$  
provided it is satisfied at time $t=0$. 
\end{pro}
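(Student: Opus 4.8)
The plan is to mimic the differentiate-and-substitute strategy already used in the proof of Proposition~\ref{pro_DG2D_poisson}, but now in the Fourier setting where the DG matrices $A^x, A^y$ are replaced by the scalar multipliers $ik_x, ik_y$ and the Kronecker-product machinery collapses to ordinary complex arithmetic. Concretely, I would fix a mode $(k_x,k_y)\neq(0,0)$, define the quantity
\[
\Phi_{k_x,k_y}(t)=ik_x\hat{E}_{1,k_x,k_y}(t)+ik_y\hat{E}_{2,k_x,k_y}(t)-\sum_{j_1,j_2}\hat{f}_{k_x,k_y,j_1,j_2}(t)\,\Delta v_1\Delta v_2,
\]
and show that $\partial_t\Phi_{k_x,k_y}\equiv 0$; together with $\Phi_{k_x,k_y}(0)=0$ (the initial Poisson constraint) this gives $\Phi_{k_x,k_y}(t)=0$ for all $t>0$, which is exactly the claim.

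First I would differentiate $\Phi_{k_x,k_y}$ in time and substitute the evolution equations from \eqref{eq:model_fourier2dx} for $\partial_t\hat{E}_1$, $\partial_t\hat{E}_2$, and $\partial_t\hat{f}$. The magnetic contributions combine as $ik_x(ik_y\hat{B})+ik_y(-ik_x\hat{B})=(i^2 k_xk_y-i^2k_xk_y)\hat{B}=0$, so the $\hat{B}$ terms cancel exactly, playing the role of the commutator $[(A^y\otimes\mathbf{1}_x),(\mathbf{1}_y\otimes A^x)]=0$ in the DG proof. The averaged current terms $\bar J_1,\bar J_2$ only appear for the $(0,0)$ mode and hence vanish identically on the modes under consideration, which is why the hypothesis $(k_x,k_y)\neq(0,0)$ is needed. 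What remains after this cancellation is
\[
\partial_t\Phi_{k_x,k_y}=-\sum_{j_1,j_2}\bigl(ik_x v_{1,j_1}+ik_y v_{2,j_2}\bigr)\hat{f}_{k_x,k_y,j_1,j_2}\,\Delta v_1\Delta v_2-\partial_t\sum_{j_1,j_2}\hat{f}_{k_x,k_y,j_1,j_2}\,\Delta v_1\Delta v_2.
\]

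Next I would substitute the Vlasov equation $\partial_t\hat f_{k_x,k_y,j_1,j_2}=-(v_{1,j_1}ik_x+v_{2,j_2}ik_y)\hat f_{k_x,k_y,j_1,j_2}-\widehat{(\mathcal F\,\mathcal D f)}_{k_x,k_y,j_1,j_2}$ into the last term. The linear advection part then cancels precisely against the current terms already present, leaving only the contribution of the nonlinear force term $\sum_{j_1,j_2}\widehat{(\mathcal F\,\mathcal D f)}_{k_x,k_y,j_1,j_2}\Delta v_1\Delta v_2$. I expect the \textbf{main obstacle} to be showing that this velocity sum of the discretized force term vanishes: this is the discrete analogue of $\int_{\mathbb R^2}\nabla_v\cdot(\mathcal F f)\,dv_1dv_2=0$, and it relies on the conservation property of the finite-difference operator $\mathcal D$ (that $\sum_{j}\mathcal D\mathbf f_j=0$, the same property invoked in the Vlasov–Amp\`ere proof) together with the fact that $\mathcal F=(E_1+Bv_2,\,E_2-Bv_1)$ yields a discrete divergence form whose velocity sum telescopes to zero. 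Once this conservation identity is established, $\partial_t\Phi_{k_x,k_y}=0$ follows and the induction on time closes the proof.
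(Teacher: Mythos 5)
Your proposal is correct and follows essentially the same route as the paper's proof: differentiate the Poisson quantity in time, substitute the Maxwell equations so that the $\hat{B}$ contributions cancel via $ik_x(ik_y\hat{B})+ik_y(-ik_x\hat{B})=0$, absorb the remaining current terms using the Vlasov equation, and kill the nonlinear term by the conservativity of the finite-difference operator ${\cal D}$ before integrating in time from the initial constraint. Your explicit remark that the zero-average corrections $\bar{J}_1,\bar{J}_2$ live only on the $(0,0)$ mode (which is why the hypothesis $(k_x,k_y)\neq(0,0)$ is needed) is a detail the paper leaves implicit, but it does not change the argument.
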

\begin{proof}
As in the proof of Prop \eqref{pro_DG2D_poisson}, we take the derivative with respect to time of the left hand side to get 
\begin{eqnarray*}
\partial_t(ik_x\hat{E}_{1,k_x,k_y}+ik_y\hat{E}_{2,k_x,k_y})&=& ik_x\Big(ik_y \hat{B}_{k_x,k_y}-\sum_{j_1, j_2} v_{1, j_1} \hat{f}_{k_x,k_y,j_1,j_2} \Delta v_1\Delta  v_2\Big)\nonumber\\
        && + ik_y\Big(-ik_x \hat{B}_{k_x,k_y}- \sum_{j_1, j_2} v_{2, j_2} \hat{f}_{k_x,k_y,j_1,j_2} \Delta v_1\Delta v_2\Big)\nonumber\\ 
        &=&\sum_{j_1, j_2}  \Delta v_1\Delta  v_2 \Big(\partial_t \hat{f}_{k_x,k_y,j_1,j_2}+\widehat{({\cal F} {\cal D} f)}_{k_x,k_y,j_1,j_2}\Big)\nonumber\\
        &=&\partial_t \Big(\sum_{j_1, j_2} \hat{f}_{k_x,k_y,j_1,j_2} \Delta v_1\Delta  v_2 \Big), 
\end{eqnarray*} 
where we used the summation on $j_1,j_2$ of the Vlasov equation 
together with the fact that ${\cal D}$ is a conservative finite difference operator. Finally, integrating the result in time between $0$ and $t$ and assuming the relation holds at $t=0$ 
ends the proof. 
\end{proof}

To derive a fully discrete scheme, we introduce as previously 
the vector ${\hat{\bf f}}_{k_x,k_y}\in\mathbb{C}^{N_{v_1}N_{v_2}}$  and denote $U(t)=({\hat{\bf f}}, {\hat{ B}}, {\hat{ E}_2}, {\hat{ E}_1})_{k_x,k_y}(t) \in \mathbb{M}_{N_{v_1}N_{v_2}+3,N_{v_1}N_{v_2}+3}(\mathbb{C})$, then the system \eqref{eq:model_fourier2dx} can be rewritten as 
\begin{equation}
\label{ULU2d}
\partial_t U= L U + N(U), 
\end{equation}
with 
\begin{equation}
\label{def_L_2Dfourier}
\hspace{-0.15cm}L\hspace{-0.1cm}=\hspace{-0.1cm} \left( 
 \begin{array}{lllllllccc}
-i(\!k_x\mbox{diag}(\vec{v_{1}})\!\!+\!\! k_y v_{2, 1}\!) & {\bf 0}_{N_{v_1},N_{v_1}} & \hdots &   {\bf 0}_{N_{v_1},N_{v_1}} &  {\bf 0}_{N_{v_1},1} & \dots & {\bf 0}_{N_{v_1},1}\\
 {\bf 0}_{N_{v_1},N_{v_1}} & -i(\!k_x\mbox{diag}(\vec{v_{1}})\!\! +\!\! k_y v_{2, 2}\!) &  \hdots &   {\bf 0}_{N_{v_1},N_{v_1}} & \vdots &  \dots & \vdots\\
\vdots & \ddots & \ddots  &  {\bf 0}_{N_{v_1},N_{v_1}}  &  \vdots & \dots & \vdots\\
 {\bf 0}_{N_{v_1},N_{v_1}} & \hdots  &   {\bf 0}_{N_{v_1},N_{v_1}} & -i(\!k_x\mbox{diag}(\vec{v_{1}})\!\! +\!\! k_y v_{2, N_{v_2}}\!) &  {\bf 0}_{N_{v_1},1}  &  \dots  &  {\bf 0}_{N_{v_1},1} \\
 {\bf 0}_{1,N_{v_1}} & \hdots  & {\bf 0}_{1,N_{v_1}} & {\bf 0}_{1,N_{v_1}} & 0 &   -ik_x & ik_y  \\
 -\Delta v_1\Delta v_2 v_{2,1} {\bf 1} & -\Delta v_1\Delta v_2 v_{2,2} {\bf 1} & \hdots & -\Delta v_1\Delta v_2 v_{2,N_{v_y}} {\bf 1} & -ik_x & 0 & 0\\
 - \Delta v_1\Delta v_2 \vec{v_1} & -\Delta v_1\Delta v_2 \vec{v_1} & \hdots  & -\Delta v_1\Delta v_2 \vec{v_1} & ik_y& 0& 0 
\end{array} 
 \right) 
  \end{equation}  
  and 
\begin{eqnarray*}
\hat{\bf f}_{k_x,k_y} = \left( 
 \begin{array}{lllllll} 
\hat{f}_{k_x,k_y,1,1}\\
\vdots \\
\hat{f}_{k_x,k_y,N_{v_1},1}\\
\vdots \\
\hat{f}_{k_x,k_y,N_{v_1},N_{v_2}}
\end{array} 
 \right)\in \mathbb{M}_{N_{v_1}\!N_{v_2},N_{v_1}\!N_{v_2}}(\mathbb{C})
\;\; \mbox{ and } \;\; 
N(U)=&\hspace{-0.35cm} \left( 
 \begin{array}{lllllll}  
(\widehat{{\cal F {\cal D} {\bf f}}})_{k_x,k_y}\\
0\\
0\\
0
 \end{array} 
 \right) \in \mathbb{M}_{N_{v_1}\!N_{v_2}\!+3,N_{v_1}\!N_{v_2}\!+3}(\mathbb{C}), 
  \end{eqnarray*}  
 where we denote $\vec{v_1}$ the vector with components $(\vec{v_1})_{j_1}=v_{1, \min} + j_1 \Delta v_1$ and 
 ${v_{2,j_2}}=v_{2, \min} + j_2 \Delta v_2$. In the same spirit 
 as previously, 
diag$(\vec{v_1})$ denotes the diagonal matrix whose diagonal is composed of $\vec{v_1}$. 

We now study the time discretization of \eqref{ULU2d} based on a Lawson scheme which requires to know $\exp(tL)$ with $L$ given above. 
Similar (but more tedious) calculations to those performed in the proof 
of Prop \ref{prop_expo_2dv} enable to get an explicit expression 
of $\exp(tL)$. To end this section, we prove that the following scheme,  with $U^n\approx U(t^n), t^n=n\Delta t, \Delta t>0$ and the notations 
introduced above 
\begin{equation}
\label{lawson_fourier2D}
U^{n+1} = \exp(\Delta t L)U^n + \Delta t \exp(\Delta t L)N(U^n),  
\end{equation}
that approximates the ODE \eqref{ULU2d} preserves a discrete Poisson equation. This is the object of the following proposition. 

\begin{pro}
The Lawson scheme \eqref{lawson_fourier2D} satisfied by $U^n=(\hat{\bf f}, \hat{B}, \hat{E}_2, \hat{E}_1)_{k_x,k_y}^n$ preserves 
the following Poisson equation 
$$
ik_x \hat{E}_{1,k_x,k_y}^n+ik_y \hat{E}_{2,k_x,k_y}^n = \sum_{j_1,j_2} \hat{f}^n_{k_x,k_y,j_1,j_2}\Delta v_1\Delta v_2, \;\; (k_x,k_y)\neq (0,0), \forall n\in \mathbb{N}^\star, 
$$
provided that it is satisfied at the initial time $n=0$. 
\end{pro}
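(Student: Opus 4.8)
The plan is to bypass the (admittedly tedious) explicit form of $\exp(tL)$ entirely and instead argue through a single scalar linear functional. For each fixed $(k_x,k_y)\neq(0,0)$, introduce the Poisson defect
\[
\mathcal{P}(U) := ik_x\hat{E}_{1,k_x,k_y} + ik_y\hat{E}_{2,k_x,k_y} - \sum_{j_1,j_2}\hat{f}_{k_x,k_y,j_1,j_2}\,\Delta v_1\Delta v_2,
\]
a linear functional of the state $U=(\hat{\bf f},\hat{B},\hat{E}_2,\hat{E}_1)_{k_x,k_y}$, so that the claim is exactly $\mathcal{P}(U^n)=0$. I would establish two elementary facts about $\mathcal{P}$ and then read off the conclusion for \eqref{lawson_fourier2D} by linearity; this mirrors the two earlier Poisson-preservation proofs but is cleaner here because in Fourier space the advection and Maxwell symbols are genuine scalars.

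First I would show that $\mathcal{P}(LV)=0$ for every state $V$, i.e. that the Poisson covector is a left null vector of $L$ in \eqref{def_L_2Dfourier}; this is the discrete analogue of the computation already performed in the proof of the semi-discrete proposition for \eqref{eq:model_fourier2dx}. Applying $\mathcal{P}$ to $LV$, the two $\hat{B}$-contributions cancel (the $-k_xk_y$ from $ik_x\cdot ik_y$ against the $+k_xk_y$ from $ik_y\cdot(-ik_x)$), while the velocity-weighted field sources $-ik_x\sum_{j_1,j_2} v_{1,j_1}\hat{f}\,\Delta v_1\Delta v_2 - ik_y\sum_{j_1,j_2} v_{2,j_2}\hat{f}\,\Delta v_1\Delta v_2$ cancel exactly against $-\sum_{j_1,j_2}(-i)(k_xv_{1,j_1}+k_yv_{2,j_2})\hat{f}\,\Delta v_1\Delta v_2$ coming from the advection symbol of the $\hat{\bf f}$ block. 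Since $\mathcal{P}\circ L=0$, differentiating gives $\tfrac{d}{ds}\mathcal{P}\!\left(\exp(sL)V\right)=\mathcal{P}\!\left(L\exp(sL)V\right)=0$, whence $\mathcal{P}(\exp(sL)V)=\mathcal{P}(V)$ for all $s$ and all $V$. This invariance is precisely what makes the exact propagation of the linear part by the exponential harmless for the constraint.

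Second I would check that $\mathcal{P}(N(V))=0$ for every $V$. Since the field components of $N$ in \eqref{ULU2d} vanish, $\mathcal{P}(N(V))$ reduces to $-\sum_{j_1,j_2}(\widehat{\mathcal{F}\mathcal{D}f})_{k_x,k_y,j_1,j_2}\Delta v_1\Delta v_2$, which is zero because $\mathcal{D}$ is a conservative finite-difference operator: with $\mathcal{F}=(E_1+Bv_2,\,E_2-Bv_1)$, the $v_1$-derivative part is summed first over $j_1$, where the $j_1$-independent factor $E_1+Bv_{2,j_2}$ factors out and the telescoping $\sum_{j_1}(\mathcal{D}_{v_1}f)\,\Delta v_1=0$ applies, and symmetrically the $v_2$-derivative part vanishes after summing over $j_2$. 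Combining the two facts and applying $\mathcal{P}$ to \eqref{lawson_fourier2D} gives $\mathcal{P}(U^{n+1})=\mathcal{P}(\exp(\Delta tL)U^n)+\Delta t\,\mathcal{P}(\exp(\Delta tL)N(U^n))=\mathcal{P}(U^n)+\Delta t\,\mathcal{P}(N(U^n))=\mathcal{P}(U^n)$, so $\mathcal{P}(U^n)=0$ for all $n$ by induction from the initial assumption. The same two lemmas settle the high-order case at once, since every stage of a Lawson Runge-Kutta step is assembled only from propagations $\exp(c_i\Delta tL)$ of $U^n$ and additions of $\exp(\cdot)N(U^{(i)})$ contributions, each of which $\mathcal{P}$ either preserves or annihilates. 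The only point requiring genuine care, and the place I expect the real work, is the conservativity check of the second lemma: because $\mathcal{F}$ itself depends on velocity through $Bv_2$ and $-Bv_1$, one must order the one-dimensional velocity sums correctly so that the $j_1$- and $j_2$-telescoping still applies with the remaining factors passing cleanly through.
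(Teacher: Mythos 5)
Your proof is correct, and it takes a genuinely different route from the paper's. The paper proceeds by explicit computation: it first derives the full entries of $\exp(\Delta t L)$ --- the diagonal advection block, the exponential of the homogeneous Maxwell symbol ${\cal J}$, and the coupling rows obtained from the time integrals ${\cal A},{\cal B},{\cal C}$ --- then substitutes these into the first-order Lawson update, verifies by hand that the Maxwell block exactly preserves the combination $ik_y\hat{E}_2+ik_x\hat{E}_1$, and telescopes the remaining source terms using the closed form ${\cal B}=\frac{1}{ik\cdot v_j}\bigl(1-e^{-i\Delta t\, k\cdot v_j}\bigr)$ together with the update of $\hat f$. You instead package the constraint into the linear functional $\mathcal{P}$ and prove two structural lemmas: $\mathcal{P}\circ L=0$ (the same scalar cancellation that drives the paper's semi-discrete proposition for \eqref{eq:model_fourier2dx}, whence $\mathcal{P}$ is invariant under $\exp(sL)$) and $\mathcal{P}\circ N=0$ (conservativity of ${\cal D}$, applied with the correct ordering of the $j_1$- and $j_2$-sums so that the velocity-dependent factors $E_1+Bv_{2,j_2}$ and $E_2-Bv_{1,j_1}$ factor out; this survives the spatial convolution hidden in $\widehat{({\cal F}{\cal D}f)}$ because the Fourier transform is linear in the velocity index). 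Both lemmas check out against \eqref{def_L_2Dfourier}: the $\hat B$-contributions cancel as $\pm k_xk_y$, the current terms cancel against the advection symbol $-i(k_xv_{1,j_1}+k_yv_{2,j_2})$, and your implicit use of the restriction $(k_x,k_y)\neq(0,0)$ is consistent since the per-mode matrix $L$ omits $\bar J_1,\bar J_2$, which live only on the zero mode. What the paper's route buys is that the explicit $\exp(\Delta t L)$ must be computed anyway to implement the scheme, so Poisson preservation doubles as a consistency check on those tedious formulas; what your route buys is independence from them (an algebra slip in ${\cal I}_2,{\cal I}_3$ would not invalidate the conservation proof), an immediate extension to arbitrary explicit Lawson Runge--Kutta methods --- every stage is assembled from $\exp(c\Delta tL)$-propagations, which $\mathcal{P}$ preserves, and $N$-evaluations, which $\mathcal{P}$ annihilates --- and a single argument that uniformizes all three Poisson-preservation propositions in the paper.
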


\begin{proof}
First, we need to know the shape of $\exp(\Delta tL)$. From the one-dimensional calculations and from \cite{boutin2022modified}, we have  
$$
 \!\!\!\!\!\!\!\!\!\!\!\!e^{L\Delta t} \!\!= \!\!\!
 \left( 
 \begin{array}{lllllllccc}
e^{-i  \Delta t(k_x \vec{v_{1}} + k_y v_{2, 1}) }&\!\!\! {\bf 0}_{N_{v_1},N_{v_1}} &\!\!\! {\bf 0}_{N_{v_1},N_{v_1}}\hdots &\!\!\!   {\bf 0}_{N_{v_1},N_{v_1}} &\!\!\!  {\bf 0}_{N_{v_1},1} &\!\!\! {\bf 0}_{N_{v_1},1} &\!\!\! {\bf 0}_{N_{v_1},1}\\
 {\bf 0}_{N_{v_1},N_{v_1}} &\!\!\!\!\!\!e^{-i \Delta t (k_x  \vec{v_{1}} + k_y v_{2,2}) } &\!\!\!  {\bf 0}_{N_{v_1},N_{v_1}}\hdots &\!\!\!   {\bf 0}_{N_{v_1},N_{v_1}} &\!\!\! \vdots &\!\!\!  \vdots &\!\!\! \vdots\\
\vdots & \!\!\!\!\!\!\ddots &\!\!\! \ddots  &\!\!\!  {\bf 0}_{N_{v_1},N_{v_1}}  &\!\!\!  \vdots &\!\!\! \vdots &\!\!\!  \vdots\\
 {\bf 0}_{N_{v_1},N_{v_1}} &\!\!\!\!\!\! \hdots  &\!\!\!\!\!\!   {\bf 0}_{N_{v_1},N_{v_1}} & \!\!\!\!\!\!e^{-i \Delta t (k_x  \vec{v_{1}} + k_y v_{2,N_{v_2}}) }  & \!\!\! {\bf 0}_{N_{v_1},1}  &  {\bf 0}_{N_{v_1},1}  &  {\bf 0}_{N_{v_1},1} \\
{}_e{\cal B}_{k_x,k_y,\star, 1} &\!\!\!\!\!\! {}_e{\cal B}_{k_x,k_y,\star, 2}  &\!\!\!\!\!\! \hdots &\!\!\!\!\!\! {}_e{\cal B}_{k_x,k_y,\star, N_{v_2}} & \!\!\!\!\!\! \cos(|k| \Delta t) &\!\!\!\!\!\! \frac{-i k_x\!\sin(|k| \Delta t)}{|k|}& \!\!\!\!\!\!  \frac{i k_y\!\sin(|k| \Delta t)}{|k|} \!\!\!\!\!\!\!\!\!\!\!\!  \\
{}_e{\cal E}_{2,k_x,k_y,\star, 1} &\!\!\!\!\!\! {}_e{\cal E}_{2,k_x,k_y,\star, 2}  &\!\!\!\!\!\! \hdots &\!\!\!\!\!\! {}_e{\cal E}_{2,k_x,k_y,\star, N_{v_2}} & \!\!\!\!\!\!\!  \frac{-i k_x\!\sin(|k| \Delta t)}{|k|}  & \!\!\!\!\! \frac{k_x^2\!\cos(|k|\Delta t) + k_y^2}{|k|^2} &\!\!\!\!\!\! \frac{k_xk_y\!(1\!-\!\cos(\Delta t|k|)}{|k|^2}   \\
{}_e{\cal E}_{1,k_x,k_y,\star, 1} &\!\!\!\!\!\! {}_e{\cal E}_{1,k_x,k_y,\star, 2}  &\!\!\!\!\!\! \hdots &\!\!\!\!\!\! {}_e{\cal E}_{1,k_x,k_y,\star, N_{v_2}}  &\!\!\!\!\!\!\!  \frac{i k_y\!\sin(|k| \Delta t)}{|k|} &\!\!\!\!\!\!\frac{ k_xk_y(1-\cos(\Delta t|k|) }{|k|^2} &\!\!\!\!\! \frac{k_y^2 \cos(\Delta t|k|) + k_x^2}{|k|^2}\!\!\!\!\!\!\!\!\!\!\!\!
\end{array} 
 \right)
$$
where we used the fact that the exponential of the homogeneous Maxwell part is 
$$
\exp\left(   t \left( 
 \begin{array}{lllllllccc}
0 & -ik_x & ik_y\\
-ik_x & 0 & 0 \\
ik_y & 0 &0
\end{array} 
 \right)
 \right)
 =
 \left( 
 \begin{array}{lllllllccc}
\cos( t |k|) &  -\frac{i k_x\sin( t |k|)}{|k|} & \frac{ i k_y \sin( t |k|)}{|k|}\\
-\frac{i k_x\sin( t |k|)}{|k|}  & \frac{k_x^2 \cos( t |k|)+k^2_y}{|k|^2}& \frac{k_xk_y (1- \cos( t|k|))}{|k|^2} \\
\frac{ i k_y \sin( t |k|)}{|k|} & \frac{k_xk_y (1- \cos( t|k|))}{|k|^2}   &  \frac{k_y^2 \cos( t |k|)+k_x^2}{|k|^2}
\end{array} 
 \right),  
$$
and the vectors $({}_e{\cal B}, {}_e{\cal E}_{2}, {}_e{\cal E}_{1})_{k_x,k_y,\star, j_2} \in \mathbb{\cal C}^{3 N_{v_1}}$ for all 
$k_x,k_y$ and $j_2=1, \dots, N_{v_2}$ will be given below.  
From the components on $\hat{\bf f}$, since we get a diagonal matrix, 
we have 
\begin{equation}
\label{f2D_fourier}
\hat{f}_{k_x,k_y,j_1,j_2}(t) = e^{-i  (t-t^n)(k_x v_{1, j_1} + k_y v_{2, j_2})}  f_{k_x,k_y,j_1,j_2}^n
\end{equation}
which can be inserted in the Maxwell part to compute the vectors $({}_e{\cal B}, {}_e{\cal E}_{2}, {}_e{\cal E}_{1})_{k_x,k_y,\star, j_2}$.  
To do so, we consider the vector $V(t)=(\hat{B}, \hat{E}_2, \hat{E}_1)^T(t)$ 
which solves the following ODE 
$$
\partial_t V = {\cal J} V + R \mbox{ with } 
R(t)=\left( 
 \begin{array}{lll} 
 0\\ 
 \sum_{j_1,j_2} v_{2, j_2} \hat{f}_{k_x,k_y,j_1,j_2}(t)
 \Delta v_1\Delta v_2\\
\sum_{j_1,j_2} v_{1, j_1} \hat{f}_{k_x,k_y,j_1,j_2}(t)
\Delta v_1\Delta v_2
\end{array} 
 \right) 
\mbox{ and } 
{\cal J} = \left( 
 \begin{array}{lllllllccc}
0 & -ik_x & ik_y\\
-ik_x & 0 & 0 \\
ik_y & 0 &0
\end{array} 
 \right). 
 $$
The variation of constant formula gives 
$$
V(t^{n+1}) = e^{\Delta t {\cal J}}V(t^{n}) + \int_{t^n}^{t^{n+1}} e^{(t^{n+1}-t) {\cal J}} R(t) dt.  
$$
The calculations for the integral term 
involve the following integral term 
\begin{eqnarray*}
{\cal I}_1(\hat{\bf f}^n)  &=&  \int_{t^n}^{t^{n+1}} \Big(\frac{-ik_x\sin(|k|(t^{n+1}-t))}{|k|}R_2(t) +\frac{i k_y\sin(|k|(t^{n+1}-t)))}{|k|} R_3(t)\Big) dt \nonumber\\
&=& \sum_{j_1,j_2} \Big(\frac{(-ik_x v_{2,j_2} + ik_y v_{1,j_1})}{|k|} {\cal C}  \Big) \hat{f}_{k_x,k_y,j_1,j_2}^n \Delta v_1 \Delta v_2, \nonumber\\
&=& \sum_{j_1,j_2} {}_e{\cal B}_{k_x,k_y,j_1,j_2} \hat{f}_{k_x,k_y,j_1,j_2}^n, 
\end{eqnarray*}
\begin{eqnarray*}
{\cal I}_2(\hat{\bf f}^n)  &=&  \int_{t^n}^{t^{n+1}} \Big(\frac{k^2_x\cos(|k|(t^{n+1}-t)) + k_y^2}{|k|^2}R_2(t) +\frac{k_x k_y(1-\cos(|k|(t^{n+1}-t)))}{|k|^2} R_3(t)\Big) dt \nonumber\\
&=& \sum_{j_1,j_2} \Big(v_{2,j_2} \frac{k^2_x {\cal A}  + k_y^2 {\cal B}}{|k|^2} + v_{1,j_1} \frac{k_x k_y({\cal B}-{\cal A})}{|k|^2} \Big) \hat{f}_{k_x,k_y,j_1,j_2}^n \Delta v_1 \Delta v_2, \nonumber\\
&=& \sum_{j_1,j_2} {}_e{\cal E}_{2,k_x,k_y,j_1,j_2} \hat{f}_{k_x,k_y,j_1,j_2}^n, 
\end{eqnarray*}
\begin{eqnarray*}
{\cal I}_3(\hat{\bf f}^n) &=&  \int_{t^n}^{t^{n+1}} \Big( \frac{k_x k_y(1-\cos(|k|(t^{n+1}-t)))}{|k|^2} R_2(t) + \frac{k^2_y\cos(|k|(t^{n+1}-t)) + k_x^2}{|k|^2}R_3(t)  \Big) dt \nonumber\\
&=& \sum_{j_1, j_2} \Big( v_{2,j_2} \frac{k_x k_y ({\cal B}-{\cal A})}{|k|^2} + v_{1,j_1} \frac{k_y^2 {\cal A} + k^2_x {\cal B}}{|k|^2}\Big) \hat{f}_{k_x,k_y,j_1,j_2}^n\Delta v_1 \Delta v_2,  \nonumber\\
&=& \sum_{j_1,j_2} {}_e{\cal E}_{1,k_x,k_y,j_1,j_2} \hat{f}_{k_x,k_y,j_1,j_2}^n,  
\end{eqnarray*}  
where the time integrals are  
\begin{eqnarray*}
{\cal A} &=&  \int_{t^n}^{t^{n+1}} \cos(|k|(t^{n+1}-t)) e^{-i  (t-t^n)(k\cdot v_j)} dt,\nonumber\\
{\cal B} &=& \int_{t^n}^{t^{n+1}} e^{-i  (t-t^n)(k\cdot v_j)} dt = \frac{1}{i k\cdot v_j} (1-e^{-i \Delta t k\cdot v_{j}}), \nonumber\\
{\cal C} &=&  \int_{t^n}^{t^{n+1}} \sin(|k|(t^{n+1}-t)) e^{-i  (t-t^n)(k\cdot v_j)} dt,
\end{eqnarray*}
with $k\cdot v_j = k_x v_{1,j_1} + k_yv_{2,j_2}$. 
To check the conservation of the Poisson equation, one focuses on the equations on $\hat{E}_2$ and $\hat{E}_1$ only. 
Thanks to the above calculations, we can write down the update of $\hat{E}_2, \hat{E}_1$ using the first order Lawson scheme 
\begin{eqnarray*}
\hat{E}_2^{n+1} &=& (e^{\Delta t{\cal J}} V^n)_2 + {\cal I}_2 (\hat{\bf f}^n-\Delta t \widehat{({\cal F}{\cal D}{\bf f}^n)}), \nonumber\\
\hat{E}_1^{n+1} &=& (e^{\Delta t{\cal J}} V^n)_3 + {\cal I}_3 (\hat{\bf f}^n-\Delta t \widehat{({\cal F}{\cal D}{\bf f}^n)}).  
\end{eqnarray*}
Thus, it remains to compute $ik_x \hat{E}_1^{n+1}+ik_y\hat{E}_2^{n+1}$ 
using the last relations. First, one can check easily that  
\begin{eqnarray*}
ik_y(e^{\Delta t{\cal J}} V^n)_2 + ik_x (e^{\Delta t{\cal J}} V^n)_3 &=& ik_y \Big( \frac{-ik_x \sin(|k|\Delta t)}{|k|}\hat{B}^n + \frac{k^2_x\cos(|k|\Delta t) + k_y^2}{|k|^2}\hat{E}_2^n + \frac{k_x k_y(1-\cos(|k|\Delta t))}{|k|^2} \hat{E}_1^n\Big) \nonumber\\
&&+ ik_x \Big( \frac{ik_y \sin(|k|\Delta t)}{|k|}\hat{B}^n +\frac{k_x k_y(1-\cos(|k|\Delta t))}{|k|^2} \hat{E}_2^n + \frac{k^2_y\cos(|k|\Delta t) + k_x^2}{|k|^2}\hat{E}_1^n \Big)\nonumber\\
&=& i k_y \hat{E}_2^{n}+i k_x \hat{E}_1^{n}. 
\end{eqnarray*}
Then, we have 
\begin{eqnarray*}
i k_y \hat{E}_2^{n+1}+i k_x \hat{E}_1^{n+1}  &=& i k_y \hat{E}_2^{n}+i k_x \hat{E}_1^{n} + ik_y {\cal I}_2 (\hat{f}^n-\Delta t \widehat{({\cal F}{\cal D}f^n)})+ ik_x {\cal I}_3 (\hat{f}^n-\Delta t \widehat{({\cal F}{\cal D}f^n)})\nonumber\\
&=& i k_y \hat{E}_2^{n}+i k_x \hat{E}_1^{n} + ik_y \sum_{j_1,j_2} v_{2,j_2} (\hat{f}^n-\Delta t \widehat{({\cal F}{\cal D}f^n)}) {\cal B} \Delta v_2\Delta v_1 \nonumber\\
&& + ik_x\sum_{j_1,j_2} v_{1,j_1}  (\hat{f}^n-\Delta t \widehat{({\cal F}{\cal D}f^n)}) {\cal B} \Delta v_2\Delta v_1 \nonumber\\
&=& i k_y \hat{E}_2^{n}+i k_x \hat{E}_1^{n}  + \Delta v_2\Delta v_1  
\sum_{j_1,j_2} (ik\cdot v_j)  (\hat{f}^n-\Delta t \widehat{({\cal F}{\cal D}f^n)}) \frac{1}{ik\cdot v_j}(1-e^{-i\Delta t k\cdot v_j})\nonumber\\ 
&=& i k_y \hat{E}_2^{n}+i k_x \hat{E}_1^{n}  + \Delta v_2\Delta v_1  
\Big[\sum_{j_1,j_2} (1-e^{-i\Delta t k\cdot v_j}) f^n  \nonumber\\
&&-\Delta t \sum_{j_1,j_2}\widehat{({\cal F}{\cal D}f^n  }) + \Delta t \sum_{j_1,j_2} e^{-i\Delta t k\cdot v_j} \widehat{({\cal F}{\cal D}f^n  }) \Big] \nonumber\\
&=& i k_y \hat{E}_2^{n}+i k_x \hat{E}_1^{n}  + \Delta v_2\Delta v_1 
\Big[\sum_{j_1,j_2} (1-e^{-i\Delta t k\cdot v_j}) \hat{f}^n   +  \sum_{j_1,j_2} e^{-i\Delta t k\cdot v_j} \hat{f}^n - \hat{f}^{n+1}   \Big] \nonumber\\
&=& i k_y \hat{E}_2^{n}+i k_x \hat{E}_1^{n}  + \Delta v_2\Delta v_1 
\sum_{j_1,j_2} (\hat{f}^n - \hat{f}^{n+1}), 
\end{eqnarray*}
where we used the update for $f$: $\hat{f}^{n+1} = e^{-i\Delta t k\cdot v_j} \hat{f}^n -\Delta t e^{-i\Delta t k\cdot v_j} \widehat{({\cal F}{\cal D}f^n})$ and the conservation property of the discrete operator ${\cal D}$. Then, if the Poisson equation is satisfied at iteration $n$, it is propagated to the next iteration, which concludes the proof. 
\end{proof}

\section{Numerical experiments}\label{sec:numerialtests}

    In this section, we perform numerical experiments for linear transport problems and Vlasov equations. 
    First, we study the different order of convergence on a linear problem and then, we present some numerical results of the  exponential DG solutions for Vlasov equations in 1dx-1dv and 1dx-2dv cases.

\subsection{2D linear passive-transport problems}


We consider the following two-dimension linear transport equation 
\begin{equation}
\partial_t u + \partial_x u + \partial_v u =0, \ (x,v)\in[0,2\pi]^2
\end{equation}
with the initial condition $u(x,v,0)=\sin(x+v)$ and periodic boundary condition. The exact solution is $u(x,v,t)=\sin(x+v-2t)$ which enables us to check the different order of convergence. Indeed, for a Lawson scheme based on a underlying Runge-Kutta method RK($m,s)$  (order $m$, $s$ stages, a DG space approximation with $P_k$ and a finite difference approximation in $v$ of order $4$ (which means ${\cal D}$ 
is chosen as a 4th order centered finite difference operator CD4), we expect the following estimate  
$$
\|{\bf u}(t^n)-{\bf u}_h^n\|_{L^2}\leq C (\Delta x^{k}+\Delta v^{4}+\Delta t^{m})
$$
Here we use the 3rd order Lawson-RK method for the time discretization, with a final time $T=1$, and consider different parameters to test  the convergence rates in $t, x$ and $v$. We firstly take $N_v=320,\Delta t=0.01$, and consider different mesh size $N_x$ to check the convergence rate of DG in x direction for both central and upwind fluxes (in this linear case, upwind fluxes can be considereed easily). Table \ref{2Dlinearconstanttestorderinx} shows the $L^\infty$ and $L^2$ errors, the associated orders of convergence for DG-$P^k$ for $k = 1, 2$ in $x-$ direction. The optimal convergence rate for DG is clearly obtained. In particular, the sub-optimal and optimal rates are observed 
according to the choice of the flux and to the oddness of $k$, as discussed in \cite{liu_shu_DG_convergence}. 
Then, we study the convergence in $v-$ direction. We take $k=5, N_x=32, 
\Delta t=0.01$, and consider different mesh size $N_v$ to check the convergence rate of the fourth order approximation of ${\cal D}$ (CD4). Table \ref{2Dlinearconstanttestorderinv} shows the expected convergence (note that only central fluxes are considered in this case). Finally, to check time accuracy, we take $k=5, N_x=16, 
N_v=32$ and $\Delta t=0.0001$ to compute a reference solution. Then we get the error table for different time step sizes $\Delta t$. From Table \ref{2Dlinearconstanttestorderintime}, the expected 3rd order convergence is observed for all cases.
\begin{table}[!hbp]
\centering
\begin{tabular}{|c|c|c|c|c|c|c|c|c|c|}
\hline
\multirow{2}{*}{} & \multirow{2}{*}{$N_x$} & \multicolumn{4}{|c|}{central flux}& \multicolumn{4}{|c|}{upwind flux} \\
\cline{3-10}
& & $L^\infty$-error & order & $L^2$-error & order & $L^\infty$-error & order & $L^2$-error & order  \\
\hline
\multirow{4}{*}{ $P^1$} & $10$ & 1.62E-01 & - & 3.76E-01 & - & 4.78E-02 & - & 9.51E-02 & - \\
& $20$ & 7.66E-02 & 1.08 & 1.86E-01 & 1.02 & 1.27E-02 & 1.91 & 2.42E-02 & 1.98\\
& $40$ & 3.70E-02 & 1.05 & 9.24E-02 & 1.01 & 3.25E-03 & 1.97 & 6.07E-03 & 1.99\\
& $80$ & 1.82E-02 & 1.03 & 4.60E-02 & 1.00 & 8.23E-04 & 1.99 & 1.52E-03 & 2.00\\
& $160$ & 9.01E-03 & 1.01 & 2.30E-02 & 1.00 & 2.07E-04 & 1.99 & 3.79E-04 & 2.00\\
\hline
\multirow{4}{*}{ $P^2$} & $10$ & 2.29E-03 & - & 3.76E-03 & - & 2.52E-03 & - & 4.67E-03 & - \\
& $20$ & 2.65E-04 & 3.11 & 4.53E-04 & 3.05 & 3.07E-04 & 3.04 & 5.83E-04 & 3.00 \\
& $40$ & 3.25E-05 & 3.02 & 5.63E-05 & 3.01 & 3.84E-05 & 3.00 & 7.29E-05 & 3.00 \\
& $80$ & 4.05E-06 & 3.01 & 7.03E-06 & 3.00 & 4.79E-06 & 3.00 & 9.11E-06 & 3.00\\
& $160$ & 5.12E-07 & 2.98 & 8.99E-07 & 2.97 & 5.95E-07 & 3.00 & 1.16E-06 & 2.98\\
\hline
\end{tabular}
\caption{Linear transport equation: $L^\infty$ and $L^2$-norm space errors of the Lawson-DG scheme with  $P^1$, $P^2$ (CD4 in velocity ($N_v=320$) and RK(3,3) in time ($\Delta t=0.01$)). }\label{2Dlinearconstanttestorderinx}
\end{table}

\begin{table}[!hbp]
\centering
\begin{tabular}{|c|c|c|c|c|c|}
\hline
\multirow{2}{*}{} & \multirow{2}{*}{$N_v$} & \multicolumn{4}{|c|}{central flux} \\
\cline{3-6}
& & $L^\infty$-error & order & $L^2$-error & order \\
\hline
\multirow{4}{*}{ CD4 } & $8$ & 1.18E-02 & - & 5.24E-02 & - \\
& $16$ & 7.78E-04 & 3.92 & 3.50E-03 & 3.92 \\
& $32$ & 4.93E-05 & 3.98 & 2.19E-04 & 3.98 \\
& $64$ & 3.09E-06 & 4.00 & 1.37-05 & 4.00 \\
& $128$ & 1.98E-07 & 3.97 & 8.78E-07 & 3.97 \\
\hline
\end{tabular}
\caption{Linear transport equation: $L^\infty$ and $L^2$-norm velocity errors of the Lawson-DG scheme with CD4 (DG-$P^5$ in space ($N_x=32$) and RK(3,3) ($\Delta t=0.01$)).}
\label{2Dlinearconstanttestorderinv}
\end{table}

\begin{table}[!hbp]
\centering
\begin{tabular}{|c|c|c|c|c|c|}
\hline
\multirow{2}{*}{} & \multirow{2}{*}{$\Delta t$} & \multicolumn{4}{|c|}{central flux} \\
\cline{3-6}
& & $L^\infty$-error & order & $L^2$-error & order \\
\hline
\multirow{4}{*}{ RK(3,3)} & $0.1000$ & 4.17E-05 & - & 1.85E-04 & - \\
& $0.0500$ & 5.21E-06 & 2.99 & 2.31E-05 & 2.99 \\
& $0.0250$ & 6.51E-07 & 3.00 & 2.89E-06 & 3.00 \\
& $0.0125$ & 8.14E-08 & 3.00 & 3.61E-07 & 3.00 \\
& $0.00625$ & 1.02E-08 & 3.00 & 4.52E-08 & 3.00 \\
\hline
\end{tabular}
\caption{Linear transport equation: $L^\infty$ and $L^2$-norm time  errors of the Lawson-DG scheme with RK(3,3) (DG-$P^5$ in space ($N_x=16$) and CD4 ($N_v=32$)).}
\label{2Dlinearconstanttestorderintime}
\end{table}

\subsection{Vlasov-Amp\`ere equation} 
We firstly consider the following initial condition for Landau damping
$$
f_0(x,v)=\frac{1}{\sqrt{2\pi}}e^{-v^2/2}(1+\alpha \cos(kx)),
$$
where we take $x\in [0,2\pi/k],\ k=0.5,\ v\in [-9,9]$ and $\alpha=10^{-3}$. Here we still use DG method for space discretization,  the finite difference method $CD4$ in $v$ direction and the 3rd  Lawson-RK method for time discretization (see Section \ref{DG_VA}). 
The numerical parameters are chosen as follows: $\Delta t=0.1$, $N_x=31(P^2), N_v=121$. 

In Figure \ref{VPlandau}, the time evolution of the electric energy 
$\|E(t)\|_{L^2}$ is displayed  in semi-log scale (with the corresponding damping rate in red) 
and the deviation of the total energy ${\cal H}(t)-{\cal H}(0)$ 
with ${\cal H}(t) =\int v^2 f(t, x, v) dxdv + \int E^2(t, x) dx$. 
The expected behaviors (correct damping rate and good energy conservation) are recovered. 
\begin{figure}[!ht]
	\centering
 \includegraphics[height=50mm]{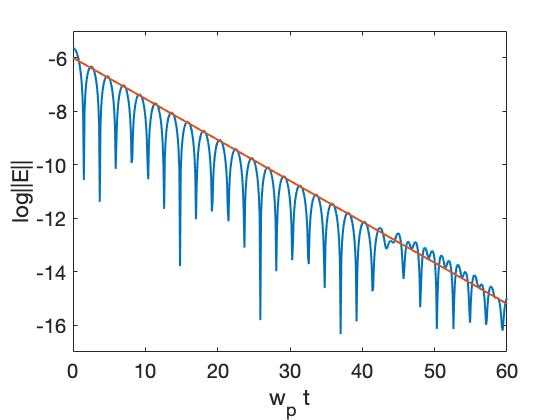}
 \includegraphics[height=50mm]{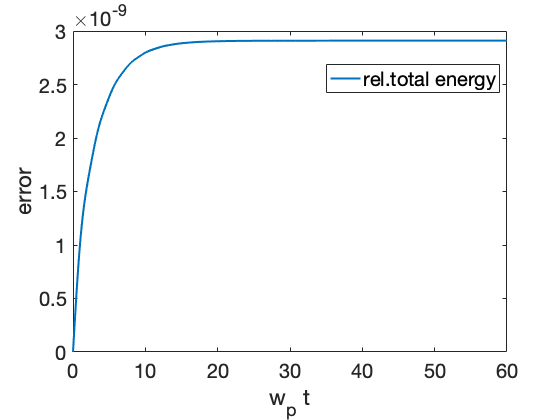}
	\caption{Vlasov-Amp\`ere equation (Landau damping): time evolution of the electric energy in semi-log scale (left) and of the deviation of the total energy (right). Lawson-DG RK(3,3) and $P^2$  ($\Delta t=0.1$, $N_x=31(P^2), N_v=121$). }
	\label{VPlandau}
\end{figure}

We consider a second test called the two stream instability test with the initial condition
$$
f_0(x,v)=\frac{1}{\sqrt{2\pi}}v^2e^{-v^2/2}(1+\alpha \cos(kx)),
$$
for which the same physical and numerical parameters as previously are  kept except the final time which is $T=300$. In Figures \ref{VPtwostream}, we plot the time evolution of the 
electric energy in semi-log scale (and the corresponding instability rate in red) and the deviation of the total energy. 
For this test, a linear instability is first observed (up to $t\approx 30$) during which a vortex 
in phase space is created (see \ref{VPtwostream}), and it is followed by a nonlinear phase.  
These two behaviors are well reproduced by the scheme even if the mesh is quite coarse. In partiular, even if the vortex is well captured, 
we can observe spurious oscillations due to the use of central schemes. Note that the Poisson equation is satisfied in both cases up to machine accuracy. 

\begin{figure}[!ht]
	\centering
 \includegraphics[height=50mm]{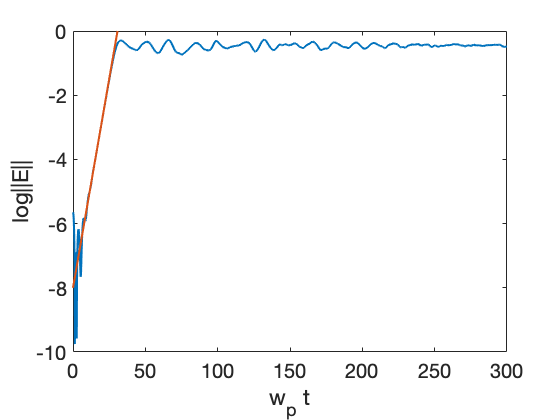}
\includegraphics[height=50mm]{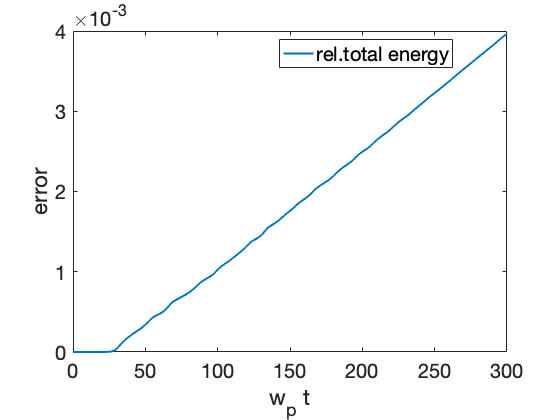}
\includegraphics[height=50mm]{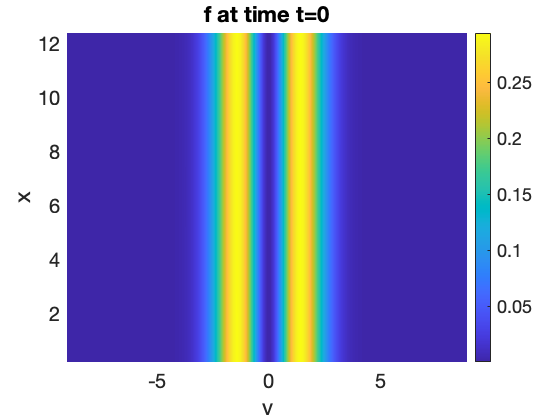}
\includegraphics[height=50mm]{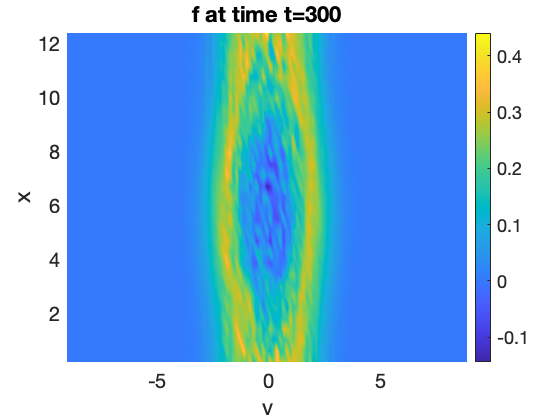}
	\caption{Vlasov-Amp\`ere equation (two stream instability): time evolution of the electric energy  in semi-log scale (top left) and of the deviation of the total energy (top right), snapshot of $f(t=0)$ (bottom left) 
 and snapshot of $f(t=300)$ (bottom right). 
 Lawson-DG RK(3,3) and $P^2$ ($\Delta t=0.1$, $N_x=31(P^2), N_v=121$).}
	\label{VPtwostream}
\end{figure}

\subsection{Vlasov-Maxwell equations 1dx-2dv} 
We consider the Weibel instability \cite{weibel1959spontaneously} by consideering the Vlasov-Maxwell 1dx-2dv model studied in Section \ref{VM1dx-2dv} with the initial distribution
and fields are of the form
$$
f(t=0,x,v_1,v_2)=\frac{1}{2\pi \sigma_1\sigma_2}\exp\left(-\frac{1}{2}\left(\frac{v_1^2}{\sigma_1^2}+\frac{v_2^2}{\sigma_2^2}\right)\right)(1+\alpha \cos(k_wx)),\ x\in [0,2\pi/k),
$$
$$
B(t=0x)=\beta \cos(kx), \;\;\; E_2(t=0,x)=0,
$$
and $E_1(x,t=0)$ is imposed from the Poisson equation. We choose the parameters $\sigma_1=0.02/\sqrt{2},\ \sigma_2=\sqrt{12}\sigma_1,\ k=1.25,\ \alpha=0,\ 
\beta=-10^{-4}$ for our test, which gives a growth rate of 0.02784 by solving the dispersion relation (see  Weibel \cite{weibel1959spontaneously}). 
For the numerical simulations up to  a final time $T=500$, we still use DG method for space discretization in $x-$ direction, finite difference method with $CD4$ in $v$ direction and  Lawson-RK(3,3) method for time discretization (Lawson-RK(3,3)-DG CD4) and consider $\Delta t=0.5$, $N_x=21(P^2), N_{v_1}=N_{v_2}=44$. For comparison, we also consider Fourier method for space discretization in $x-$ direction, finite difference with a third order upwind (UP3) in  $v$ direction and  Lawson-RK(3,3) method in time (Lawson-RK(3,3)-Fourier UP3) with $\Delta t=0.5$, $N_x=64, N_{v_1}=N_{v_2}=88$. 

In Figure \ref{VMWeibel instability}, we show the time evolution of 
the electromagnetic energies $\|B(t)\|_{L^2}$, $\|E_y(t)\|_{L^2}$, $\|E_x(t)\|_{L^2}$ (in semi-log scale) obtained by the two methods. 
First, we can observe that the theoretical growth rate is in very good agreement with the two numerical solution. Second, the two methods are very close up to time $t\approx 300$ (which corresponds to the end of the linear phase) and slightly differs for larger times. 
We also show the evolution of the relative total energy $|{\cal H}(t) - {\cal H}(0)|/{\cal H}(0)$ in Figure \ref{VMWeibelinstabilitytotalenergy} for Lawson-RK(3,3)-DG CD4 scheme without (left) and with (right) the energy correction step presented in \cite{boutin2022modified} in the Vlasov-Amp\`ere case.  
This projection approach enables to modify the unknown by a suitable 
coefficient which is of order the scheme so that the total energy is preserved almost up to machine error $10^{-12}$.

\begin{figure}[!ht]
 \centering
 \includegraphics[height=50mm]{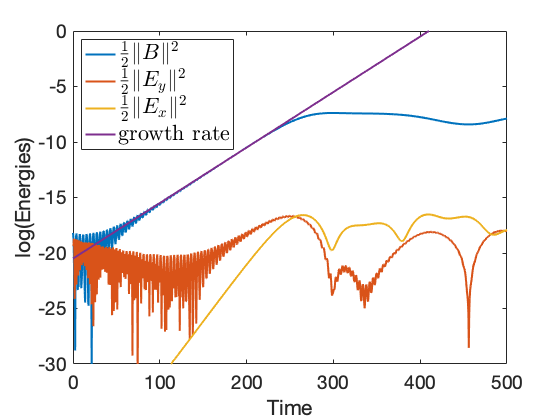}
 \includegraphics[height=50mm]{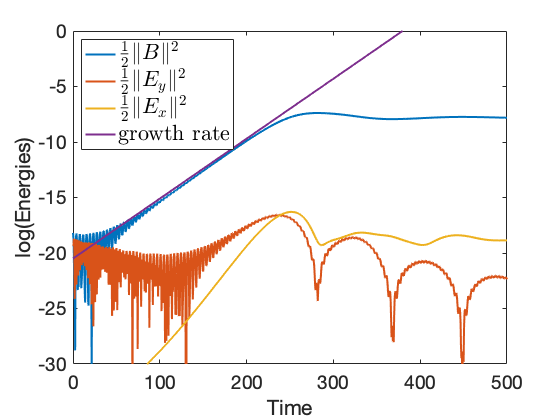}
	\caption{Vlasov-Maxwell equation (Weibel instability): time evolution of the electromagnetic energies in semi-log scale  together with
the analytic growth rate. Left: Lawson-RK(3,3)-DG CD4. Right: Lawson-RK(3,3)-Fourier UP3.}
	\label{VMWeibel instability}
\end{figure}

\begin{figure}[!ht]

 \centering
 \includegraphics[height=50mm]{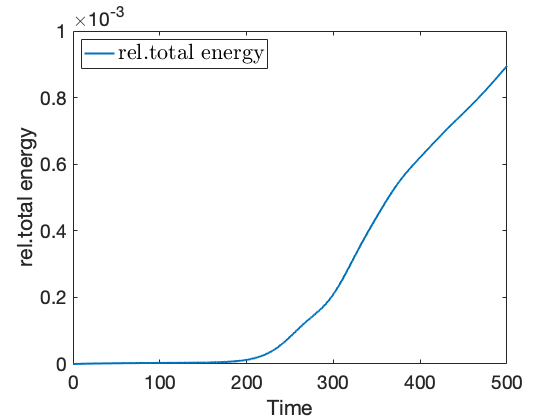}
 \includegraphics[height=50mm]{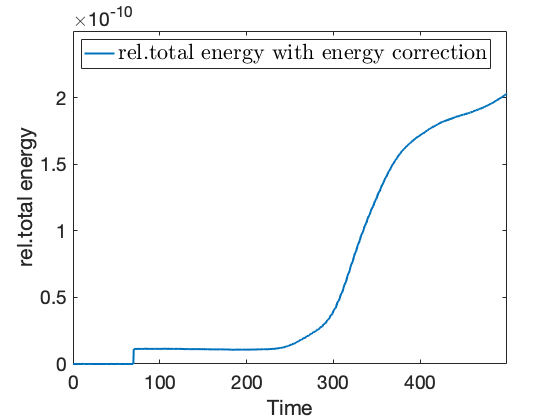}
	\caption{Vlasov-Maxwell equation (Weibel instability): time evolution the relative total energy of Lawson-RK(3,3)-DG CD4  without (left) and with (right) energy correction step.}
	\label{VMWeibelinstabilitytotalenergy}
\end{figure}


The second test for Vlasov-Maxwell equation we considered is the streaming Weibel instability  \cite{califano1997spatial,cheng2014discontinuous} for which the  initial condition is 
$$
f(t=0,x,v_1,v_2)=\frac{1}{2\pi \sigma^2}\exp\left(-\frac{v_1^2}{2\sigma^2}\right)\left(\delta \exp\left(-\frac{(v_2-v_{0,1})^2}{2\sigma^2}\right)+(1-\delta) \exp\left(-\frac{(v_2-v_{0,2})^2}{2\sigma^2}\right)\right), 
$$
$$
B(t=0, x,t=0)=\beta \cos(kx), \;\; E_2(t=0, x)=0,
$$
 and $E_1(t=0,x)=0$ from the Poisson equation. 
 We choose the parameters $\sigma=0.1/\sqrt{2},\ k=0.2,\ 
\beta=-10^{-3},\ v_{0,1}=0.5,\ v_{0,2}=-0.1$ and $\delta=1/6$ for our test. The growth rate of $E_2$ is 0.03 \cite{califano1997spatial}. For the two schemes Lawson-RK(3,3)-DG CD4 and Lawson-RK(3,3)-Fourier UP3, we take the same parameters as in the previous case but $\Delta t=0.1$ for stability reasons. 
We show the result in Figure \ref{VMstreamingWeibelinstability} in which the time evolution of the $L^2$ norm of the electromagnetic fields are displayed. 
First, we observe a good agreement with the theoretical growth rate  for these two schemes and some deviations in the nonlinear phase can be observed. 
For this case, we also consider the correction on the total energy 
and plot the time history of the relative total energy  in Figure \ref{VMstreamingWeibelinstabilitytotalenergy} for Lawson-RK(3,3)-DG CD4   without (left) and with (right) energy correction step. The total energy can be well preserved for exponential DG approximations if it is stable, and the relative error is greatly improved with energy correction step without affecting the accuracy of the scheme. 
Note that the Poisson equation is satisfied in both cases up to machine accuracy. 

\begin{figure}[!ht]
 \centering
 \includegraphics[height=50mm]{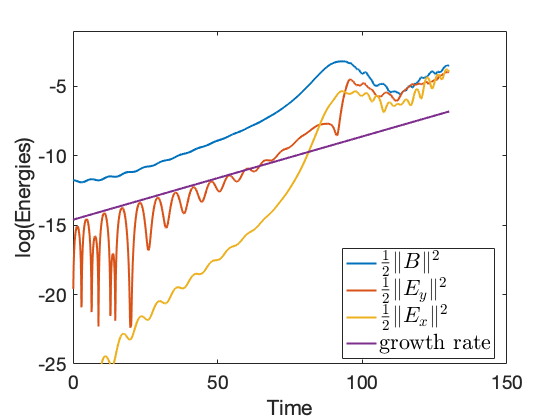}
  \includegraphics[height=50mm]{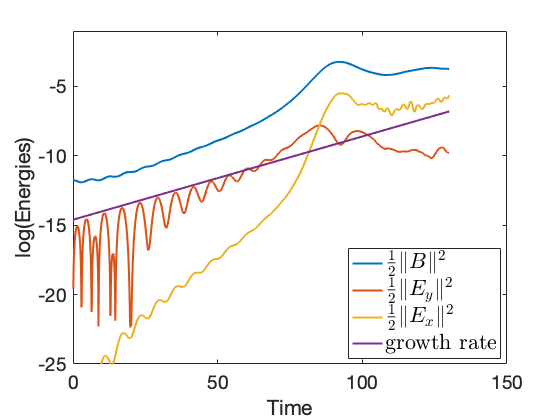}
	\caption{Vlasov-Maxwell equation (streaming Weibel instability): time evolution of the electromagnetic energies in semi-log scale  together with
the analytic growth rate. Left: Lawson-RK(3,3)-DG CD4. Right: Lawson-RK(3,3)-Fourier UP3.}
	\label{VMstreamingWeibelinstability}
\end{figure}

\begin{figure}[!ht]

 \centering
  \includegraphics[height=50mm]{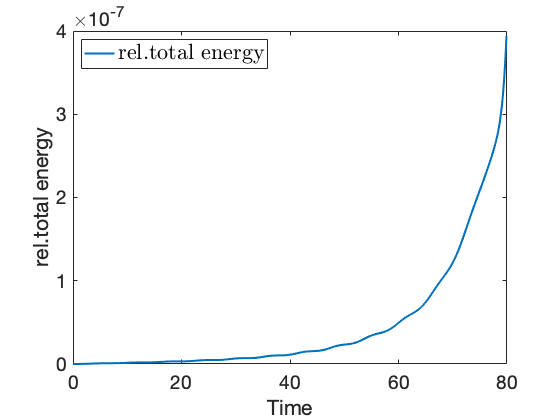}
 \includegraphics[height=50mm]{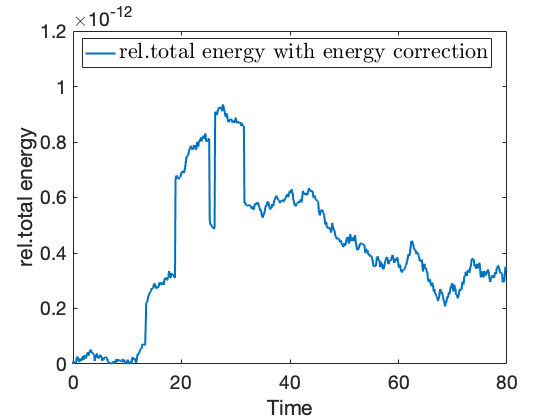}

	\caption{Vlasov-Maxwell equation (streaming Weibel instability): time evolution the relative total energy of Lawson-RK(3,3)-DG CD4  without (left) and with (right) energy correction step.}
	\label{VMstreamingWeibelinstabilitytotalenergy}
\end{figure}

\section{Conclusion}\label{sec:conclusion}
In this work, we constructed and implemented a new exponential DG  method for Vlasov equation, extending the previous works \cite{crouseilles2020exponential, crestetto2022comparison, boutin2022modified} on this topic where Fourier method in space were used. These methods allow to derive high order accuracy in time, space and velocity, still ensuring stability without the restrictive CFL type constraint 
coming from the linear part. Moreover, a discrete Poisson equation 
is satisfied and a projection technique enables to preserve the total energy. 
The extension to DG turns out to be 
an interesting alternative compared to previous approach based on Fourier which is restricted to cartesian domains with periodic boundary conditions. The approach only involves the calculation of exponential 
of DG-matrices of size $(k+1)N$ with $k$ the DG degree and $N$ the number of points in the space direction and we observe that thanks to 
the specific structure, this is also true in multi-dimensions. 

One interesting extension is to perform an efficient implementation of the method in two-dimension in space to exploit the Kronecker structure. We also plan to investigate the extension of this approach to  problems involving boundary conditions, for which monotone fluxes 
are more appropriate but requires to study the stability of the scheme. 


\appendix
 \section{Appendix A: The DG matrix construction
}
\label{appendixB1}
The goal of this appendix is to give some details on the construction of the DG-matrix \eqref{dg_matrix} which is at the core of our scheme.  
To do so, we consider the 1dx transport equation \eqref{linear_transport} with coefficient $a=1$, for which 
we wrote in \eqref{eq:semi-discreteDGschemefortransport} the semi-discrete DG scheme. 
To get the DG matrix, we consider $\psi=\xi^\ell_j$ and 
$u_h(t, x)=\sum_{m=0}^k u^m_j(t) \xi^m_j(x)$  in 
\eqref{eq:semi-discreteDGschemefortransport} to get 
\begin{equation}
 \sum_{m=0}^{k}\Big[(\partial_t{u}_{j}^m(t) \xi^m, \xi^\ell)_{I_j}-({u}_{j}^m(t) \xi^m,\partial_x\xi^\ell)_{I_j}\Big]+
 [\{u_h\}\xi^\ell]^{x_{j+1/2}}_{x_{j-1/2}}=0, 
\end{equation}
where $\ell=0,\dots,k, j=1,\dots,N$ and where the central flux 
$\{u_h\}|_{x_{j\pm 1/2}}=(u_h^+(x_{j\pm1/2})+u_h^-(x_{j\pm1/2}))/2$ is considered. Thus we obtain for the boundary term 
\begin{eqnarray*}
[\{u_h\}\xi^\ell]^{x_{j+1/2}}_{x_{j-1/2}} 
&=& \frac{1}{2}\sum_{m=0}^{k}\left[\left({u}_{j}^m(t) \xi^{m, -}(x_{j+1/2})+{u}_{j+1}^m(t) \xi^{m,+}(x_{j+1/2})\right)\xi^{\ell,-}(x_{j+1/2}) \right. \nonumber\\
&& \left.-\left({u}_{j-1}^m(t) \xi^{m,-}(x_{j-1/2})+{u}_{j}^m(t)\xi^{m,+}(x_{j-1/2})\right)\xi^{l,+}(x_{j-1/2})\right]\\
&=& \frac{1}{2}\sum_{m=0}^{k}\left[\left({u}_{j}^m(t) (1/2)^m+{u}_{j+1}^m(t) (-1/2)^m\right)(1/2)^\ell \right. \nonumber\\
&&\left. -\left({u}_{j-1}^m(t) (1/2)^m+{u}_{j}^m(t) (-1/2)^m\right)(-1/2)^\ell\right].
\end{eqnarray*}
We denote $\mathbf{u}_{j}(t)=({u}_{j}^0(t), {u}_j^1(t), ....,{u}_j^k(t))^T, \ j=1,2,...,N,$ 
then we can rewrite the DG discretization as 
\begin{equation}
\begin{aligned}
    \Delta x \left(\begin{array}{llll}
	M&  &  &  \\ 
	 & M&  &  \\
     &  & \ddots & \\
     &  &  & M\\
\end{array}
\right)
\partial_t \left(\begin{array}{llll}
	\mathbf{u}_{1}\\ 
 \mathbf{u}_{2}\\ 
 \vdots\\ 
\mathbf{u}_{N}
\end{array}\right)
&-
\left(\begin{array}{llll}
	D_1& D_2 & \hdots &D_3 \\ 
	D_3 & D_1& D_2 &  \\
     & \ddots & \ddots &D_2 \\
    D_2 & \hdots & D_3 & D_1\\
\end{array}
\right)
\left(\begin{array}{llll}
	\mathbf{u}_{1}\\ 
 \mathbf{u}_{2}\\ 
 \vdots\\ 
\mathbf{u}_{N}
\end{array}\right)
={\bf 0},
\end{aligned}
\end{equation}
where $M, D_i \in\mathbb{M}_{(k+1),(k+1)}(\mathbb{R}), i=1,2, 3$ are given by 
\begin{eqnarray*} 
M_{\ell,m}=\frac{(1/2)^{m+\ell-1}}{m+\ell-1}[1-(-1)^{m+\ell-1}], \!\!\!\!\!\! && (D_2)_{\ell,m}= (-1)^{m}(1/2)^{m+\ell-1},\nonumber\\
(D_1)_{\ell,m}=(1/2)^{m+\ell-2}\Big(\frac{\ell-1}{m+\ell-2}-\frac{1}{2}\Big)[1-(-1)^{m+\ell-2}], \!\!\!\!\!\!  &&   (D_3)_{\ell,m}=(-1)^{\ell-1}(1/2)^{m+\ell-1},  
\end{eqnarray*}
with $(D_1)_{1,1}=0$ by convention. 
Then we have 
 $$\Delta x\mathbfcal{M}\partial_t\mathbf{u}-\mathbfcal{D} \mathbf{u}=0,$$
where $\mathbfcal{M}$ is a block diagonal mass matrix of size $N(k+1)$, $\mathbfcal{D}$ is a block tridiagonal matrix of size $N(k+1)$, and $\mathbf{u}$ is the vector containing the degree of freedom 
$$
\mathbf{u}=(u_1^0, u_1^1, \dots,u_1^k, u_2^0, u_2^1, \dots,u_2^k,\dots,u_N^0, u_N^1, \dots,u_N^k)^T\in\mathbb{R}^{(k+1)N}.
$$
Now we can rewrite the DG scheme as 
\begin{equation}\label{eq:defineDGmattrixA}
    \partial_t \mathbf{u}(t)=A \mathbf{u}(t), \  \ A=(\Delta x \mathbfcal{M})^{-1}\mathbfcal{D},
\end{equation} 
where
$A$ is a block circulant matrix 
$$
A=\frac{1}{\Delta x}\left(\begin{array}{lllll}
	M^{-1}D_1& M^{-1}D_2 & \bf{0} & \hdots &M^{-1}D_3 \\ 
	M^{-1}D_3 & M^{-1}D_1& M^{-1}D_2 & \bf{0}&\hdots  \\
    \bf{0}& \ddots & \ddots & \ddots &M^{-1}D_2 \\
    M^{-1}D_2 & \bf{0}& \hdots & M^{-1}D_3 & M^{-1}D_1
\end{array}
\right) = \frac{1}{\Delta x} A_1, 
$$
or with the \mbox{circblock} notation $A_1=\mbox{circblock}(M^{-1}D_1, M^{-1}D_2, 0, 0, ..., M^{-1}D_3)$. Since $\mathbfcal{D}$ and $\mathbfcal{M}$ are independent of $\Delta x$, so is 
$A_1$ defined by $A=\Delta x^{-1} A_1$.

\section{Appendix B: Stability and error estimate for semi-discrete DG scheme.}
\label{appendixA}
In this appendix, we give some error estimate of the exponential-DG 
scheme for the one dimensional linear advection equation \eqref{linear_transport}. To do so, 
we first define some notations about norms which will be used. 
For a given function $x\mapsto v(x)$, we denote $\Vert v \Vert_j$ and $\Vert v \Vert_{\infty, j}$ as the $L^2$-norm and $L^\infty$-norm of $v$ on $I_j$ ($j=1, \dots, N$) respectively. Moreover,
\begin{equation*}
\begin{aligned}
&\Vert v\Vert=(\sum \limits_j \Vert v \Vert ^2_{j})^{\frac{1}{2}},                                                &\Vert v\Vert_{\infty}=\max\limits_j \Vert v \Vert_{\infty,j}, \\
&\Vert v\Vert_\Gamma^2=\sum_j(\vert v_{j-1/2}^+\vert^2+\vert 
v_{j-\frac{1}{2}}^-\vert^2), 
\end{aligned}
\end{equation*}
where we express the value of $v$ on the left and right limits of the grid point $x_{j+\frac{1}{2}}$ with $v_{j+\frac{1}{2}}^-$ and $v_{j+\frac{1}{2}}^+$ respectively. Define the jump and the mean of $v$ at $x_{j-\frac{1}{2}}$ as $[v]_{j-\frac{1}{2}}=(v_{j-\frac{1}{2}}^+-v_{j-\frac{1}{2}}^-)$ and $\{v\}_{j-\frac{1}{2}}=(v^+_{j-\frac{1}{2}}+v^-_{j-\frac{1}{2}})/2$.

\subsection{Notations for projections and some properties of approximation space}

The inverse properties of the finite space $V_h$ will be used.
\begin{lemma}\label{lemma:the derivative and point error estimate in Vh}
When the mesh is regular, $\forall v\in V_h,\ \exists\ C>0,\ s.t.$
\begin{equation}
h^2\Vert\partial_xv\Vert^2+h\Vert v\Vert_\Gamma^2\leqslant C\Vert v\Vert^2\label{eqn:2.19},
\end{equation}
where the positive constant $C$ is independent of $h$ and $v$.
\end{lemma}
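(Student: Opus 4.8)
The plan is to establish the estimate cell by cell through a scaling argument to a fixed reference element, and then sum the contributions over all cells. First I would fix an element $I_j=[x_{j-1/2},x_{j+1/2}]$ of length $h$ and introduce the affine change of variables $x=x_{j-1/2}+h\hat x$ mapping $I_j$ onto the reference interval $\hat I=[0,1]$; writing $\hat v(\hat x)=v(x)$, this sends a polynomial of degree at most $k$ on $I_j$ to a polynomial of degree at most $k$ on $\hat I$. The three quantities appearing in the lemma scale in an elementary way: since $\partial_x=h^{-1}\partial_{\hat x}$ and $dx=h\,d\hat x$, one has $\Vert v\Vert_j^2=h\Vert\hat v\Vert_{L^2(\hat I)}^2$ and $\Vert\partial_x v\Vert_j^2=h^{-1}\Vert\partial_{\hat x}\hat v\Vert_{L^2(\hat I)}^2$, while point values are invariant under the map, so the boundary contribution $\vert v_{j-1/2}^+\vert^2+\vert v_{j+1/2}^-\vert^2$ equals $\vert\hat v(0)\vert^2+\vert\hat v(1)\vert^2$.

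The core of the argument is a norm-equivalence statement on the fixed finite-dimensional space $P^k(\hat I)$. On this space $\hat v\mapsto\Vert\hat v\Vert_{L^2(\hat I)}$ is a genuine norm, whereas $\hat v\mapsto\Vert\partial_{\hat x}\hat v\Vert_{L^2(\hat I)}$ and $\hat v\mapsto(\vert\hat v(0)\vert^2+\vert\hat v(1)\vert^2)^{1/2}$ are continuous seminorms. Because all norms and seminorms on a finite-dimensional space are mutually bounded, there exists a constant $\hat C=\hat C(k)$, depending only on the polynomial degree and not on $h$ or on the particular element, such that $\Vert\partial_{\hat x}\hat v\Vert_{L^2(\hat I)}^2+\vert\hat v(0)\vert^2+\vert\hat v(1)\vert^2\leq\hat C\Vert\hat v\Vert_{L^2(\hat I)}^2$ for every $\hat v\in P^k(\hat I)$.

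Next I would transport this reference inequality back to $I_j$. Using the scaling identities one gets $h^2\Vert\partial_x v\Vert_j^2=h\Vert\partial_{\hat x}\hat v\Vert_{L^2(\hat I)}^2$ and $h(\vert v_{j-1/2}^+\vert^2+\vert v_{j+1/2}^-\vert^2)=h(\vert\hat v(0)\vert^2+\vert\hat v(1)\vert^2)$, so multiplying the reference estimate by $h$ yields, on each cell, the bound $h^2\Vert\partial_x v\Vert_j^2+h(\vert v_{j-1/2}^+\vert^2+\vert v_{j+1/2}^-\vert^2)\leq\hat C\,h\Vert\hat v\Vert_{L^2(\hat I)}^2=\hat C\Vert v\Vert_j^2$. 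Finally I would sum over $j=1,\dots,N$: the volume terms reassemble into $\Vert\partial_x v\Vert^2$ and $\Vert v\Vert^2$, while the boundary terms reassemble into $\Vert v\Vert_\Gamma^2$, since after the reindexing $j+1/2\mapsto j'-1/2$ (using periodicity) one has $\sum_j(\vert v_{j-1/2}^+\vert^2+\vert v_{j+1/2}^-\vert^2)=\sum_j(\vert v_{j-1/2}^+\vert^2+\vert v_{j-1/2}^-\vert^2)=\Vert v\Vert_\Gamma^2$. This gives $h^2\Vert\partial_x v\Vert^2+h\Vert v\Vert_\Gamma^2\leq C\Vert v\Vert^2$ with $C=\hat C$ independent of $h$ and $v$.

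The argument is entirely standard and I do not expect a genuine obstacle. The only points requiring care are, first, the bookkeeping that matches the summed one-sided boundary values to the definition of $\Vert\cdot\Vert_\Gamma$, which must account for the periodic boundary conditions; and second, the role of the regular-mesh (quasi-uniformity) hypothesis, which guarantees that every cell length is comparable to $h$ so that a single constant works uniformly across all elements. For the uniform mesh used throughout the paper this is automatic, since $\Delta x_j=\Delta x=h$ for all $j$, and the constant $\hat C(k)$ carries over verbatim.
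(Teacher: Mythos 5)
Your proof is correct: the paper states this lemma without proof, treating it as a standard inverse property of the space $V_h$, and your argument---affine scaling to the reference element, equivalence of norms and continuous seminorms on the finite-dimensional space $P^k(\hat I)$, scaling back, and summing over cells with the periodic reindexing of the trace terms---is precisely the canonical proof found in the standard references. Your closing remarks on the two delicate points (the bookkeeping matching $\sum_j(\vert v_{j-1/2}^+\vert^2+\vert v_{j+1/2}^-\vert^2)$ to the definition of $\Vert v\Vert_\Gamma$, and the role of quasi-uniformity, automatic here since the paper uses a uniform mesh with $\Delta x_j=h$) correctly identify the only places where care is needed.
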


Define the $L^2$-projection $P_k$
of $u$ into $V_h$ as follows:
\begin{equation*}
\begin{aligned}
  (P_k u, v_h)&=(u, v_h),  \forall v_h\in V_h \mbox{ with } v_h(\chi_j(.,t))\in P^{k}([-1,1]).
\end{aligned}
\end{equation*}
The following lemma states the error of these projections {\cite{ciarlet2002finite}}.
\begin{lemma}\label{lemma:the projection error estimate in Vh}
Let 
$P_h ^\perp q=q-P_kq$ is the projection error. For any smooth function $q(x)$, $\exists\ c>0,$ such that
\begin{equation}
\Vert P_h^\perp q\Vert_D+h\Vert\partial_x(P_h^\perp q)\Vert_D+h^{\frac{1}{2}}\Vert P_h^\perp q\Vert_{\infty,D}\leqslant ch^{k+1}\vert q\vert_{k+1,D}, \label{eqn:2.20}
\end{equation}
\begin{equation}
\Vert P_h^\perp q\Vert_\Gamma \leqslant ch^{k+\frac{1}{2}}\Vert\partial_x^{k+1}q\Vert, \label{eqn:2.21}
\end{equation}
where the positive constant $c$ is not dependent on $h$, solely depending on $q$, and $D$ may be $\Omega$ or $I_j$.
\end{lemma}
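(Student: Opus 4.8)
The plan is to reduce everything to a fixed reference element by an affine change of variables and then to invoke the Bramble--Hilbert lemma, following the classical finite element approximation theory of \cite{ciarlet2002finite}. The starting observation is that $V_h$ consists of \emph{discontinuous} piecewise polynomials, so the global $L^2$-projection $P_k$ decouples into independent local $L^2(I_j)$-projections onto $P^k(I_j)$. It therefore suffices to prove each estimate on a single element $I_j$ of length $h$ and then to square and sum over $j$.

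First I would introduce the affine map $F_j:\hat I=[-1,1]\to I_j$, $F_j(\hat x)=x_j+\tfrac{h}{2}\hat x$, and pull $q$ back to $\hat q=q\circ F_j$. Since an affine change of variables maps $P^k(\hat I)$ bijectively onto $P^k(I_j)$, the local projection commutes with the pull-back, so that $(P_h^\perp q)\circ F_j=\hat q-\hat P_k\hat q$, where $\hat P_k$ is the $L^2(\hat I)$-projection onto $P^k(\hat I)$. On the fixed reference element the operator $I-\hat P_k$ annihilates every polynomial of degree at most $k$; since moreover point evaluation at $\hat x=\pm1$ and the $L^\infty(\hat I)$-norm are bounded on $H^{k+1}(\hat I)$ by the one-dimensional embedding $H^{1}(\hat I)\hookrightarrow C(\hat I)$, the Bramble--Hilbert lemma furnishes a constant $\hat C$ depending only on $k$ such that
\begin{equation*}
\|\hat q-\hat P_k\hat q\|_{L^2(\hat I)}+|\hat q-\hat P_k\hat q|_{H^1(\hat I)}+\|\hat q-\hat P_k\hat q\|_{L^\infty(\hat I)}+\bigl|(\hat q-\hat P_k\hat q)(\pm1)\bigr|\leq \hat C\,|\hat q|_{H^{k+1}(\hat I)}.
\end{equation*}

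Next I would transport these reference estimates back to $I_j$ through the scaling relations induced by the dilation of ratio $h/2$, namely $\|v\|_{I_j}\sim h^{1/2}\|\hat v\|_{L^2(\hat I)}$, $|v|_{H^m(I_j)}\sim h^{1/2-m}|\hat v|_{H^m(\hat I)}$, $\|v\|_{\infty,I_j}=\|\hat v\|_{L^\infty(\hat I)}$, together with the invariance of boundary point values; in particular $|\hat q|_{H^{k+1}(\hat I)}\sim h^{k+1/2}|q|_{k+1,I_j}$. Applying these with $v=P_h^\perp q$ gives, on each element,
\begin{equation*}
\|P_h^\perp q\|_{I_j}\leq c\,h^{k+1}|q|_{k+1,I_j},\qquad \|\partial_x P_h^\perp q\|_{I_j}\leq c\,h^{k}|q|_{k+1,I_j},\qquad \|P_h^\perp q\|_{\infty,I_j}+\bigl|(P_h^\perp q)(x_{j\pm1/2})\bigr|\leq c\,h^{k+1/2}|q|_{k+1,I_j}.
\end{equation*}
Squaring and summing over $j$ yields \eqref{eqn:2.20}, where the weights $h$ and $h^{1/2}$ in front of the $H^1$-seminorm and the $L^\infty$-norm restore the common rate $h^{k+1}$; collecting the two boundary point values per element in the definition of $\|\cdot\|_\Gamma$ yields \eqref{eqn:2.21}.

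The only delicate point is the bookkeeping of the scaling exponents. The asymmetry between the interior rate $h^{k+1}$ and the boundary / $L^\infty$ rate $h^{k+1/2}$ arises precisely from the half power $h^{1/2}$ that is lost when a volume $L^2$-norm is replaced by a point evaluation or a sup-norm; tracking this half power correctly through the affine scaling is essentially the whole content of the proof, everything else being the standard Bramble--Hilbert argument on the reference cell.
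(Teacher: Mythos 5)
Your proposal is correct, and it is essentially the paper's approach: the paper states this lemma without proof, delegating it to the classical finite element reference \cite{ciarlet2002finite}, and your affine pull-back to the reference cell combined with the Bramble--Hilbert lemma and the scaling exponents $h^{1/2}$ (for $L^2$), $h^{-1/2}$ (for the $H^1$ seminorm) and $h^0$ (for point values and sup-norms) is exactly the standard argument from that source, with the exponent bookkeeping done correctly. One cosmetic remark: for the $L^\infty$ term in \eqref{eqn:2.20} with $D=\Omega$ one takes the maximum of the elementwise bounds (using $\vert q\vert_{k+1,I_j}\leq\vert q\vert_{k+1,\Omega}$) rather than squaring and summing, but your local estimates give this immediately.
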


Furthermore, to avoid confusion with different constants, we denote a generic positive constant by $C$, which is independent of the numerical solution and the mesh size for our problem. But, the constant may dependent on the exact solution and may have a different value in each occurrence. Moreover, for problems considered in this paper, the exact solution is assumed to be smooth with periodic or compactly supported boundary condition. Therefore, the exact solution is always bounded. 

We state the $L^2$ stability and $L^2$-norm error estimate for the scheme and also give their proof.
\begin{theorem}\label{$theorem: L^2$-stability}
For semi-discrete DG scheme \eqref{eq:semi-discreteDGschemefortransport} with central flux, we have the $L^2$-stability:
$$\frac{d}{dt}||u_h||_{L^2}^2=0.$$
\end{theorem}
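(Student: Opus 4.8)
The plan is to use the standard energy method: substitute the numerical solution itself, $\psi = u_h$, into the semi-discrete scheme \eqref{eq:semi-discreteDGschemefortransport} (with $F(u_h)=u_h$ and the central flux of \eqref{choice_fluxes}) and sum over all cells $j=1,\dots,N$. The time-derivative term on the left collects into $\sum_j \int_{I_j} \partial_t u_h\, u_h\, dx = \tfrac12 \frac{d}{dt}\sum_j \int_{I_j} u_h^2\, dx = \tfrac12 \frac{d}{dt}\|u_h\|_{L^2}^2$, so it remains to prove that the entire right-hand side vanishes.

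Next I would convert the volume term into boundary data. Since $u_h (u_h)_x = \tfrac12 \partial_x(u_h^2)$, integrating over $I_j$ gives $\int_{I_j} u_h (u_h)_x\, dx = \tfrac12\big[(u_h^-)^2|_{x_{j+1/2}} - (u_h^+)^2|_{x_{j-1/2}}\big]$. After this step the whole right-hand side is a sum of quantities localized at the cell interfaces $x_{j\pm 1/2}$, namely the numerical-flux terms $-\hat F|_{x_{j+1/2}} u_h^-|_{x_{j+1/2}} + \hat F|_{x_{j-1/2}} u_h^+|_{x_{j-1/2}}$ together with the edge values coming from the volume integral.

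The key step is to regroup this sum edge by edge rather than cell by cell. Reindexing the left-boundary contributions of $I_j$ as right-boundary contributions of its neighbour, and using the periodic boundary conditions to identify $x_{1/2}$ with $x_{N+1/2}$ so that no boundary terms are left over, the flux terms collect into $\sum_j \hat F|_{x_{j+1/2}}\,[u_h]_{j+1/2}$, while the volume edge values collect into $\tfrac12\sum_j\big[(u_h^-)^2-(u_h^+)^2\big]_{j+1/2}$. Invoking the jump and mean notation $[u_h]=u_h^+-u_h^-$, $\{u_h\}=(u_h^++u_h^-)/2$ from Appendix \ref{appendixA} and the algebraic identity $(u_h^-)^2-(u_h^+)^2 = -2\{u_h\}[u_h]$, the latter sum equals $-\sum_j \{u_h\}|_{x_{j+1/2}}[u_h]_{j+1/2}$, so the entire right-hand side reduces to $\sum_j \big(\hat F - \{u_h\}\big)|_{x_{j+1/2}}\,[u_h]_{j+1/2}$.

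The heart of the argument — and the only place where the choice of flux matters — is that for the central flux one has exactly $\hat F = \{u_h\}$ at every interface, so each per-edge contribution vanishes identically, giving a vanishing right-hand side and hence $\frac{d}{dt}\|u_h\|_{L^2}^2 = 0$. I expect the main obstacle to be purely organizational rather than conceptual: one must keep the left/right limits $u_h^{\pm}$ straight when reindexing the cell sum into an edge sum and check that periodicity closes the telescoping with no leftover boundary terms. Once the terms are correctly collected at each interface, the cancellation is a one-line algebraic identity, and I would remark that with the upwind flux $\hat F = u_h^-$ the same computation instead leaves the nonpositive dissipation term $-\tfrac12\sum_j [u_h]_{j+1/2}^2$, which explains why only the central flux yields exact $L^2$ conservation.
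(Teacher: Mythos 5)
Your proof is correct and follows essentially the same energy argument as the paper: take $\psi=u_h$ in \eqref{eq:semi-discreteDGschemefortransport}, convert the volume term via $u_h(u_h)_x=\tfrac12\partial_x(u_h^2)$, and exploit periodicity so the interface contributions cancel for the central flux; the paper merely organizes the cancellation cell by cell (reducing each cell's right-hand side to the telescoping quantity $-\tfrac{a}{2}u_h^+u_h^-|_{x_{j+1/2}}+\tfrac{a}{2}u_h^-u_h^+|_{x_{j-1/2}}$), whereas you regroup edge by edge into $\sum_j(\hat F-\{u_h\})[u_h]_{j+1/2}$, which is the same computation in a slightly different order and has the minor added benefit of exhibiting the upwind dissipation term $-\tfrac12\sum_j[u_h]_{j+1/2}^2$ as a byproduct.
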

\begin{proof}
Take the test function $\psi=u_h$ in the semi-discrete scheme \eqref{eq:semi-discreteDGschemefortransport}, we have
\begin{equation}
\begin{aligned}
    \frac{1}{2}\frac{d}{dt} \int_{{I}_j}u_h^2dx &=-  a\{u_h\}|_{{x}_{j+\frac12} }  u_h^-|_{{x}_{j+\frac12} } +   a\{u_h\}|_{{x}_{j-\frac12} } u_h^+|_{{x}_{j-\frac12} }  + \int_{{I}_j}a u_h (u_h)_xdx,\\
    &=-  a\{u_h\}|_{{x}_{j+\frac12} }  u_h^-|_{{x}_{j+\frac12} } +   a\{u_h\}|_{{x}_{j-\frac12} } u_h^+|_{{x}_{j-\frac12} }  + \frac{a}{2}(u_h^2)^-|_{{x}_{j+\frac12} }-\frac{a}{2}(u_h^2)^+|_{{x}_{j-\frac12} }.\\
    &=- \frac{a}{2}u_h^+u_h^-|_{{x}_{j+\frac12} }+\frac{a}{2}u_h^-u_h^+|_{{x}_{j-\frac12} }.
\end{aligned}
\end{equation}
 Sum over $j$ of above equation, the $L^2$-stability follows.
\end{proof}
\begin{theorem}\label{$theorem: L^2$-norm error estimate for DG scheme}
Let $T>0$, $u$ be the exact solution of problem \eqref{linear_transport},
which is sufficiently smooth with bounded derivatives.
Assume $u_h$ is the DG approximation of semi-discrete scheme \eqref{eq:semi-discreteDGschemefortransport} with the central flux and the approximation space $V_h$ is the space consisting of $k$-th piecewise polynomial. Then it holds that
\begin{equation}
\Vert u(T)-u_h(T)\Vert_{L^2} \leqslant Ch^{k} ,
\end{equation}
where C is a positive constant independent on $\Delta x$.
\end{theorem}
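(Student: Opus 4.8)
The plan is to follow the classical energy method for DG error analysis, specialised to the central flux, which is precisely what forces the suboptimal rate $h^{k}$ instead of $h^{k+1}$. First I would rewrite the semi-discrete scheme \eqref{eq:semi-discreteDGschemefortransport}, summed over $j$, in the compact form $(\partial_t u_h,\psi)+a(u_h,\psi)=0$ for all $\psi\in V_h$, where the bilinear form $a(\cdot,\cdot)$ collects the volume term $-\int u_h\,\partial_x\psi$ together with the central-flux interface contributions $\{u_h\}^-\!\psi$. Since the exact solution $u$ is continuous across the cell boundaries, a cell-wise integration by parts shows that it satisfies the very same identity $(\partial_t u,\psi)+a(u,\psi)=0$ for all $\psi\in V_h$ (consistency, using $\{u\}=u$ at each interface). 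Subtracting the two relations yields the error equation $(\partial_t e,\psi)+a(e,\psi)=0$ with $e=u-u_h$.

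Next I would split the error as $e=\eta+\xi$, with $\eta=P_h^\perp u=u-P_k u$ the projection error and $\xi=P_k u-u_h\in V_h$. Because $P_k$ commutes with $\partial_t$ and $\eta$ is $L^2$-orthogonal to $V_h$ cell by cell, the term $(\partial_t\eta,\psi)$ drops, leaving $(\partial_t\xi,\psi)+a(\xi,\psi)=-a(\eta,\psi)$. Taking $\psi=\xi$, the central-flux energy identity $a(\xi,\xi)=0$ — the very computation underlying Theorem \ref{$theorem: L^2$-stability} — removes the left-hand flux contribution, so that $\tfrac12\frac{d}{dt}\|\xi\|^2=-a(\eta,\xi)$.

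The heart of the argument is then to bound $a(\eta,\xi)$. I would first observe that its volume part $\int_{I_j}\eta\,\partial_x\xi\,dx$ vanishes identically, since $\partial_x\xi|_{I_j}\in P^{k-1}(I_j)\subset P^k(I_j)$ while $\eta$ is orthogonal to $P^k(I_j)$ by definition of $P_k$. Hence only the interface terms survive, and after collecting contributions face by face they reduce to $a(\eta,\xi)=-\sum_j\{\eta\}_{j+1/2}[\xi]_{j+1/2}$. A Cauchy--Schwarz inequality on the skeleton gives $|a(\eta,\xi)|\le\|\eta\|_\Gamma\,\|[\xi]\|_\Gamma$. Here Lemma \ref{lemma:the projection error estimate in Vh}, inequality \eqref{eqn:2.21}, bounds the projection error on the boundary by $\|\eta\|_\Gamma\le Ch^{k+1/2}$, whereas the inverse inequality of Lemma \ref{lemma:the derivative and point error estimate in Vh}, inequality \eqref{eqn:2.19}, controls the jumps of the discrete function through $\|\xi\|_\Gamma\le Ch^{-1/2}\|\xi\|$. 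Combining the two estimates produces $|a(\eta,\xi)|\le Ch^{k}\|\xi\|$, and therefore $\frac{d}{dt}\|\xi\|\le Ch^{k}$.

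Finally I would integrate in time over $[0,T]$, using $\|\xi(0)\|\le Ch^{k+1}$ (or $\xi(0)=0$ if the discrete initial data is taken as $P_k u_0$), to obtain $\|\xi(T)\|\le Ch^{k}$, and conclude by the triangle inequality $\|u(T)-u_h(T)\|\le\|\eta(T)\|+\|\xi(T)\|\le Ch^{k+1}+Ch^{k}=Ch^{k}$. The main obstacle, and the exact place where half an order is lost relative to the upwind case, is the estimate of the interface term: since the central flux yields $a(\xi,\xi)=0$ and hence \emph{no} coercive control of the jumps $\|[\xi]\|_\Gamma$, one is forced to absorb the jump through the inverse inequality at the price of a factor $h^{-1/2}$, which degrades the boundary projection estimate $h^{k+1/2}$ to the final rate $h^{k}$. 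A sharper result (optimal $h^{k+1}$ for even $k$) would require a specially tailored projection, as alluded to in the remark following the statement.
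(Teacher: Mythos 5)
Your proposal is correct and follows essentially the same route as the paper's proof: the same splitting $e=\eta+\xi$ with $\eta=P_h^\perp u$, the same cancellations (orthogonality of $P_k$ killing the time-derivative and volume terms, the central-flux energy identity $a(\xi,\xi)=0$ from Theorem~\ref{$theorem: L^2$-stability}), and the same interface bound via Cauchy--Schwarz combined with the trace estimate \eqref{eqn:2.21} and the inverse inequality \eqref{eqn:2.19}, yielding the $h^{k+1/2}\cdot h^{-1/2}$ loss. The only cosmetic difference is at the end, where you integrate $\frac{d}{dt}\Vert\xi\Vert\leq Ch^{k}$ directly while the paper uses Young's inequality followed by Gronwall's lemma; both conclude equally well.
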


\begin{proof}
Denote error as $e_u=u_h-u$.
 Notice that the scheme  \eqref{eq:semi-discreteDGschemefortransport} is still satisfied with $u_h=u$. So, we have the error equation
 \begin{align}
 \int_{{I}_j}(\partial_t e_u\psi)dx =-  a\{e_u\}|_{{x}_{j+\frac12} }  \psi^-|_{{x}_{j+\frac12} } +   a\{e_u\}|_{{x}_{j-\frac12} } \psi^+|_{{x}_{j-\frac12} }   + \int_{{I}_j}a e_u\psi_xdx.
\end{align}
 Define $e_u=u_h-u=(u_h-P_ku)-(u-P_ku)=\widetilde{e_u} -P_k^\perp u$. Then taking $\psi=\widetilde{e_u}$, we have
 \begin{equation}\label{eqn: error equationfor DG scheme}
 \begin{aligned}
   &\int_{{I}_j}\partial_t (\widetilde{e_u})\widetilde{e_u}dx\\
   &=\int_{{I}_j}\partial_t (P_k^\perp u)\widetilde{e_u}dx -  a\{\widetilde{e_u}\}|_{{x}_{j+\frac12} }  \widetilde{e_u}^-|_{{x}_{j+\frac12} } +   a\{\widetilde{e_u}\}|_{{x}_{j-\frac12} } \widetilde{e_u}^+|_{{x}_{j-\frac12} }   + \int_{{I}_j}a \widetilde{e_u}(\widetilde{e_u})_xdx\\
   &+  a\{P_k^\perp u\}|_{{x}_{j+\frac12} }  \widetilde{e_u}^-|_{{x}_{j+\frac12} } -   a\{P_k^\perp u\}|_{{x}_{j-\frac12} } \widetilde{e_u}^+|_{{x}_{j-\frac12} }   - \int_{{I}_j}a P_k^\perp u(\widetilde{e_u})_xdx.
 \end{aligned}
 \end{equation}
By the the definition of the projections and some calculations, the right terms $RHS$ of the error equation \eqref{eqn: error equationfor DG scheme} become
 \begin{equation}
 \begin{aligned}   &RHS=- \frac{a}{2}\widetilde{e_u}^+\widetilde{e_u}^-|_{{x}_{j+\frac12} }+\frac{a}{2}\widetilde{e_u}^-\widetilde{e_u}^+|_{{x}_{j-\frac12} }+  a\{P_k^\perp u\}|_{{x}_{j+\frac12} }  \widetilde{e_u}^-|_{{x}_{j+\frac12} } -   a\{P_k^\perp u\}|_{{x}_{j-\frac12} } \widetilde{e_u}^+|_{{x}_{j-\frac12} }.
 \end{aligned}
 \end{equation}
 Sum over $j$, 
 $$\frac{1}{2}\frac{d}{dt} ||\widetilde{e_u}||^2=\sum_j RHS =-\sum_ja\{P_k^\perp u\}[\widetilde{e_u}]|_{{x}_{j+\frac12}}.$$
 Furthermore, from Holder's inequality, Lemma \eqref{lemma:the derivative and point error estimate in Vh} and Lemma \eqref{lemma:the projection error estimate in Vh}, we have
 \begin{equation}
   \begin{aligned}
     &\frac{1}{2}\frac{d}{dt} ||\widetilde{e_u}||^2\leq \sum_j\left|a\{P_k^\perp u\}[\widetilde{e_u}]|_{{x}_{j+\frac12}}\right |\\
     &\leq c ||P_k^\perp u||_\Gamma ||\widetilde{e_u}||_\Gamma \leq c h^{k+\frac{1}{2}}h^{-\frac{1}{2}}||\widetilde{e_u}||\\
     &\leq c\Vert \widetilde{e_u}\Vert^2+ch^{2k}.
   \end{aligned}
 \end{equation}

Thus by Gronwall's inequality, the conclusion in Theorem \ref{$theorem: L^2$-norm error estimate for DG scheme} follows.
\end{proof}

\section{Appendix C: Ker$(A)$ and projection $\Pi$}
\label{appendixC}
Here, the projection matrix $\Pi$ onto Ker$(A)$ with $A$ the DG-matrix  \eqref{dg_matrix} is discussed. A general expression (for any $k, N$) 
turns out to be difficult and we compute Ker$(A)$ (and the projection $\Pi$) for several practical cases. As mentioned in Remark \ref{remark_eigenA}, there are mainly two cases, according to the 
oddness of $(k+1)N$: $(i)$ if $(k+1)N$ is odd, $0$ is a simple eigenvalue of $A$
and Ker$(A)$=Span$(u_1)$ where $u_1\in\mathbb{R}^{(k+1)N}$ correspond 
constants in the space $P^k$; $(ii)$ if $(k+1)N$ is even, $0$ is a double eigenvalue of $A$ and Ker$(A)$=Span$(u_1, u_2)$ and $u_2\in\mathbb{R}^{(k+1)N}$ has to be determined. 
This second case recalls what happens for the second centered finite differences in which constant vector belongs to the kernel but 
also the sequence $(-1)^j$.

\begin{itemize}
    \item odd case: 
    We can check that 
    $$
    u_1=\Big[1, \underbrace{0,\hdots,0}_{\in \mathbb{R}^k},  1,\underbrace{0,\hdots,0}_{\in \mathbb{R}^k}, \hdots,  1, \underbrace{0,\hdots,0}_{\in \mathbb{R}^k}\Big]^T\in\mathbb{R}^{(k+1)N}.
    $$ is a eigenvector of $A$ associated to the eigenvalue $0$. 
    By $\Pi x=\langle x,u_1\rangle  u_1$, we get the expression of the matrix $\Pi$ 
    $$
    \Pi=[u_1,{\bf 0},\hdots,{\bf 0}, u_1,{\bf 0},\hdots,{\bf 0}, \hdots,  u_1,{\bf 0},\hdots,{\bf 0}\Big]\in\mathbb{M}_{(k+1)N,(k+1)N}(\mathbb{R}), 
    $$ 
    with ${\bf 0}\in\mathbb{R}^{(k+1)N}$. 
\item even case: in addition to $u_1$, we need to find a second  eigenvector to construct $\Pi$. We give below the expression of $u_2$ 
for some $k=1$ to $k=5$  
\begin{itemize}
\item  $k=0$, $u_2=\Big[0, 1,  0,1, \hdots,  0, 1\Big]^T\in\mathbb{R}^{N}.$
\item  $k=1$, $u_2=\Big[\underbrace{0, 1}_{\in\mathbb{R}^{2}}, \underbrace{0, 1}_{\in\mathbb{R}^{2}}, \hdots, \underbrace{0, 1}_{\in\mathbb{R}^{2}}\Big]^T\in\mathbb{R}^{2N}.$
\item  $k=2$, $u_2=\Big[\underbrace{-\frac{1}{6}, 0,1,0,0,-1}_{\in\mathbb{R}^{6}},\hdots,\underbrace{-\frac{1}{6}, 0,1,0,0,-1}_{\in\mathbb{R}^{6}}\Big]^T\in\mathbb{R}^{3N}.$
\item  $k=3$, $u_2=\Big[\underbrace{0,-\frac{3}{20}, 0,1}_{\in\mathbb{R}^{4}},\underbrace{0,-\frac{3}{20}, 0,1}_{\in\mathbb{R}^{4}},\hdots,\underbrace{0,-\frac{3}{20}, 0,1}_{\in\mathbb{R}^{4}}\Big]^T\in\mathbb{R}^{4N}.$
\item  $k=4$, 
$v_2=\Big[\underbrace{-\frac{3}{280},0,\frac{3}{14},0,-1,0,0,-\frac{3}{14},0,1}_{\in\mathbb{R}^{10}},\hdots,\underbrace{-\frac{3}{280},0,\frac{3}{14},0,-1,0,0,-\frac{3}{14},0,1}_{\in\mathbb{R}^{10}}\Big]^T\in\mathbb{R}^{5N}.$
\item case $k=5$, $u_2=\Big[\underbrace{0,\frac{5}{336}, 0,-\frac{5}{18},0,1}_{\in\mathbb{R}^{6}},\underbrace{0,\frac{5}{336}, 0,-\frac{5}{18},0,1}_{\in\mathbb{R}^{6}},\hdots,\underbrace{0,\frac{5}{336}, 0,-\frac{5}{18},0,1}_{\in\mathbb{R}^{6}}\Big]^T\in\mathbb{R}^{6N}.$
\end{itemize}
 In the even case, when $k$ is even, we observe a double pattern which is repeated $N/2$ (since when $k$ is even, $N$ is even to ensure $(k+1)N$ is even). Once we get $(u_1,u_2)$, the formula $\Pi x=\langle x,u_1\rangle u_1+\langle x,u_2\rangle u_2$ enables to get  $\Pi$. 
\end{itemize}

\section{Appendix D: Lawson-Fourier method for Vlasov-Maxwell 1dx-2dv.}
\label{appendixD}
In this appendix, we extend the method presented in \cite{boutin2022modified} to the Vlasov-Maxwell model in 1dx-2dv. 
This approach is compared to the Lawson-DG method in the numerical section \ref{sec:numerialtests}. 

Starting from the Vlasov-Maxwell 1dx-2dv model \eqref{eq:model} 
satisfied by \\ 
$f(t, x, v_1, v_2), E_1(t, x), E_2(t, x), B(t, x)$, 
 with $t\geq 0, x\in [0,L]$ and $(v_1, v_2)\in \mathbb{R}^2$, 
we shall use a Fourier method in the space direction $x$ and we consider a grid in the velocity direction $v_{\ell,j_\ell} = v_{\ell, \min} + j_\ell \Delta v_\ell, \ell=1, 2$. 
Denoting $\hat{f}_{k,j_1,j_2}(t)$ the spatial Fourier coefficients of $f(t, x, v_{j_1}, v_{j_2})$ and $(E_{1,k}, E_{2,k}, B_k)(t)$ the Fourier coefficients of  $(E_1, E_2, B)(t, x)$ then  gives the following semi-discretized scheme for $k=1, \dots, N_x$ ($N_x$ being the number of grid points in $[0, L]$) 
\begin{equation}
\label{eq:model_fourier}
	\left\{
	\begin{aligned}
		&\partial_t \hat{f}_{k,j_1,j_2}+v_{1,j_1}i k \hat{f}_{k,j_1,j_2}+ \widehat{({\cal F} {\cal D} f)}_{k,j_1,j_2} = 0, \;\; \mbox{ with }  {\cal F} =(E_1 + B v_2, E_2  -B v_1  ), \\
		&\partial_t \hat{B}_k =- ik \hat{E}_{2,k}, \\
		&\partial_t \hat{E}_{1,k}=-\sum_{j_1, j_2} v_{1, j_1} \hat{f}_{k,j_1,j_2} \Delta v_1\Delta  v_2,\\
		&\partial_t \hat{E}_{2,k}=-ik \hat{B}_k- \sum_{j_1, j_2} v_{2, j_2} \hat{f}_{k,j_1,j_2} \Delta v_1\Delta v_2,  
	\end{aligned}
\right.
\end{equation} 
with the initial condition $\hat{f}_{k, j_1, j_2}, \hat{B}_k(0), \hat{E}_{1,k}(0),\hat{E}_{2,k}(0)$ satisfying the Poisson equation 
$ik\hat{E}_{1,k}(0) = \sum_{j_1,j_2} \hat{f}_{k,j_1,j_2}(0)\Delta v_1\Delta v_2$ for $k\neq 0$. 
Let denote $U=(\hat{\bf f}, \hat{B}, \hat{E}_2, \hat{E}_1)\in \mathbb{M}_{N_{v_1}N_{v_1}+3,N_{v_1}N_{v_1}+3}(\mathbb{C})$, then the previous system can be rewritten as 
\begin{equation}
\label{ULU}
\partial_t U = L U + N(U), 
\end{equation}
with 
\begin{eqnarray}
U&=& \left( 
 \begin{array}{llll}
\hat{f}_{k,\star,1}\\
\hat{f}_{k,\star,2}\\
\vdots \\
\!\!\! \hat{f}_{k,\star,N_{v_2}}  \!\!\!\!\\
\hat{B}_k\\
\hat{E}_{2,k}\\
\hat{E}_{1,k}
\end{array} 
 \right), \;\;\;\;\;\;   
 N(U)= 
 \left( 
 \begin{array}{llllccc}
-\widehat{({\cal F} {\cal D} f)}_{k,\star,1}\\
-\widehat{({\cal F} {\cal D} f)}_{k,\star,2}\\
\vdots\\
-\widehat{({\cal F} {\cal D} f)}_{k,\star,N_{v_2}}\\
 0 \\
   0 \\
0
\end{array} 
 \right), \;\;\;\; \hat{f}_{k,\star,j_2}\in \mathbb{C}^{N_{v_1}}, 
 \begin{array}{llcc} \forall k&=1,\dots, N_x, \\ 
 \forall j_2 &=1,\dots, N_{v_2},    
 \end{array} 
 \nonumber\\
 \label{L_fourier}
L\!\!\!&=&\!\!\! \left( 
 \begin{array}{lllllllccc}
-ik\mbox{diag}(\vec{v_{1}}) & {\bf 0}_{N_{v_1},N_{v_1}} & {\bf 0}_{N_{v_1},N_{v_1}}\hdots &   {\bf 0}_{N_{v_1},N_{v_1}} &  {\bf 0}_{N_{v_1},1} & \dots & {\bf 0}_{N_{v_1},1}\\
 {\bf 0}_{N_{v_1},N_{v_1}} & -ik\mbox{diag}(\vec{v_{1}}) &  {\bf 0}_{N_{v_1},N_{v_1}}\hdots &   {\bf 0}_{N_{v_1},N_{v_1}} & \vdots &  \vdots & \vdots\\
\vdots & \ddots & \ddots  &  {\bf 0}_{N_{v_1},N_{v_1}}  &  \vdots & \vdots & \vdots\\
 {\bf 0}_{N_{v_1},N_{v_1}} & \hdots  &   {\bf 0}_{N_{v_1},N_{v_1}} & -ik\mbox{diag}(\vec{v_{1}}) &  {\bf 0}_{N_{v_1},1}  & \vdots  &  {\bf 0}_{N_{v_1},1} \\
 {\bf 0}_{1,N_{v_1}} & \hdots  & {\bf 0}_{1,N_{v_1}} & {\bf 0}_{1,N_{v_1}} & 0 &   -ik & 0  \\
 -\Delta v_1\Delta v_2 (\vec{v_2})_1 {\bf 1} & -\Delta v_1\Delta v_2 (\vec{v_2})_2 {\bf 1} & \hdots & -\Delta v_1\Delta v_2 (\vec{v_2})_{N_{vy}} {\bf 1} & -ik & 0 & 0\\
  -\Delta v_1\Delta v_2 \vec{v_1} & -\Delta v_1\Delta v_2 \vec{v_1} & \hdots  & -\Delta v_1\Delta v_2 \vec{v_1} & 0& 0& 0 
\end{array} 
 \right) 
  \end{eqnarray}  
 where we denote $\vec{v_1}$ the vector with components $(\vec{v_1})_{j_1}=v_{1, \min} + j_1 \Delta v_1$ and 
 $\vec{v_2}$ the vector with components $(\vec{v_2})_{j_2}=v_{2, \min} + j_2 \Delta v_2$. Moreover, 
diag$(\vec{v_1})$ denotes the diagonal matrix whose diagonal is composed of $\vec{v_1}$, ${\bf 1}\in {\mathbb{R}}^{N_{v_1}}$ denotes the vector with components $1$ and 
${\bf 0}_{A,B}$ is a matrix with $A$ lines and $B$ columns with zeros. 
 The size of the matrix $L$ is $(N_{v_1}N_{v_2} + 3)$ and in spite 
 of its size, one can see that $L$ is sparse. 
 
 \medskip 
 
 A key point is to compute $\exp( L\Delta t)$ to design an exponential scheme approximating \eqref{ULU}. Denoting $U^n\approx U(t^n), t^n=n\Delta t, \Delta t>0$, the first order Lawson scheme is  
 $$
 U^{n+1} = \exp( L\Delta t)U^n + \Delta t \exp( L\Delta t) N(U^n).
 $$  
 The following proposition gives an explicit expression of $\exp(-\Delta t L)$.
 \begin{pro}
  The exponential of the matrix $L$ given by \eqref{L_fourier} 
  is given by 
 $$
 \left( 
 \begin{array}{lllllllccc}
e^{-ik \Delta t \mbox{diag}(\vec{v_{1}}) }& {\bf 0}_{N_{v_1},N_{v_1}} & {\bf 0}_{N_{v_1},N_{v_1}}\hdots &   {\bf 0}_{N_{v_1},N_{v_1}} &  {\bf 0}_{N_{v_1},1} & {\bf 0}_{N_{v_1},1} & {\bf 0}_{N_{v_1},1}\\
 {\bf 0}_{N_{v_1},N_{v_1}} &e^{-ik \Delta t \mbox{diag}(\vec{v_{1}}) } &  {\bf 0}_{N_{v_1},N_{v_1}}\hdots &   {\bf 0}_{N_{v_1},N_{v_1}} & \vdots &  \vdots & \vdots\\
\vdots & \ddots & \ddots  &  {\bf 0}_{N_{v_1},N_{v_1}}  &  \vdots & \vdots & \vdots\\
 {\bf 0}_{N_{v_1},N_{v_1}} & \hdots  &   {\bf 0}_{N_{v_1},N_{v_1}} & e^{-ik \Delta t \mbox{diag}(\vec{v_{1}}) }  &  {\bf 0}_{N_{v_1},1}  &  {\bf 0}_{N_{v_1},1}  &  {\bf 0}_{N_{v_1},1} \\
 {}_e{\cal B}_{k,1} & {}_e{\cal B}_{k,2}  & \hdots & {}_e{\cal B}_{k,N_{v_2}} &  \cos(k \Delta t) & -i\sin(k \Delta t)& 0   \\
{}_e{\cal E}_{2,k,1} & {}_e{\cal E}_{2,k,2} & \hdots  & {}_e{\cal E}_{2,k,N_{v_2}} & -i\sin(k\Delta t)  & \cos(k\Delta t)& 0  \\
{}_e{\cal E}_{1,k}& {}_e{\cal E}_{1,k} & \hdots & {}_e{\cal E}_{1,k} &  0& 0 & 1
\end{array} 
 \right)
$$
where ${}_e{\cal B}_{k,j_2},{}_e{\cal E}_{2,k,j_2}, {}_e{\cal E}_{1,k} 
\in\mathbb{M}_{1,N_{v_1}}(\mathbb{C})$ for $j_2=1, \dots, N_{v_2}$ are given by 
\begin{eqnarray*}
{}_e{\cal B}_{k,j_2}&=& \frac{i\Delta v_1 \Delta v_2}{k}v_{2, j_2} \vec{\alpha}, \;\; 
{}_e{\cal E}_{2,k,j_2}=  \frac{i\Delta v_1 \Delta v_2}{k}v_{2, j_2} \vec{\beta}, \;\; 
{}_e{\cal E}_{1,k} = \frac{i\Delta v_1 \Delta v_2}{k}({\bf 1}-  e^{-ik \vec{v_{1}}}), 
\end{eqnarray*}
where the vectors $\vec{\alpha}, \vec{\beta}\in\mathbb{M}_{1,N_{v_1}}(\mathbb{C})$ whose components are given by 
\begin{eqnarray*}
{\alpha}_{j_1} &=&  -\frac{ e^{-ik\Delta t} }{2(1-v_{1, j_1})} - \frac{ e^{ik\Delta t} }{2(1+v_{1, j_1})} + \frac{ e^{-ik \Delta t  v_{1, j_1}} }{(1-v_{1, j_1}^2)},\nonumber\\
{\beta}_{j_1} &=& -\frac{e^{-ik\Delta t} }{2(1-v_{1, j_1})} + \frac{e^{ik\Delta t} }{2(1+v_{1, j_1})}+ \frac{v_{1, j_1} e^{-ik \Delta t  v_{1, j_1}} }{(1-v_{1, j_1}^2)}.
\end{eqnarray*}
\end{pro}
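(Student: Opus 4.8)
The plan is to exploit the block lower-triangular structure of $L$ in \eqref{L_fourier}: the distribution-function block (the upper-left $N_{v_1}N_{v_2}\times N_{v_1}N_{v_2}$ part) is diagonal, with entries $-ikv_{1,j_1}$, and is completely decoupled from the electromagnetic unknowns, whereas the fields $(\hat{B}_k,\hat{E}_{2,k},\hat{E}_{1,k})$ are merely driven by $\hat{\bf f}$ through the last three block rows. I would therefore integrate the linear ODE $\partial_t U = LU$ over $[t^n,t^{n+1}]$ in exactly the same three stages as in the proof of Proposition \ref{prop_expo_2dv}, the single structural difference being that the DG matrix $A$ (a discretization of $-\partial_x$) is now replaced by its Fourier symbol $-ik$. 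In particular every occurrence of $(A+\Pi)^{-1}$ collapses to the scalar $1/(-ik)=i/k$, which is legitimate because the statement only concerns modes $k\neq0$.

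First, the diagonal $\hat{\bf f}$ block gives immediately $\hat{f}_{k,j_1,j_2}(t)=e^{-ikv_{1,j_1}(t-t^n)}\hat{f}^n_{k,j_1,j_2}$, hence the diagonal $e^{-ik\Delta t\,\mathrm{diag}(\vec{v_1})}$ blocks. Second, I would exponentiate the homogeneous $3\times3$ Maxwell block ${\cal J}=\left(\begin{smallmatrix}0&-ik&0\\-ik&0&0\\0&0&0\end{smallmatrix}\right)$; its $\hat{E}_{1,k}$ line is inert under the field-only flow and contributes the trailing $1$, while the upper $2\times2$ part equals $-ik\left(\begin{smallmatrix}0&1\\1&0\end{smallmatrix}\right)$, whose square is $-k^2{\bf 1}$, so that $\exp(\Delta t{\cal J})$ produces the $\cos(k\Delta t)$ diagonal and $-i\sin(k\Delta t)$ off-diagonal entries of the stated matrix. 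Third, the coupling blocks ${}_e{\cal E}_{1,k}$, ${}_e{\cal B}_{k,j_2}$ and ${}_e{\cal E}_{2,k,j_2}$ are recovered by Duhamel's formula with $\hat{\bf f}(t)$ as source: for $\hat{E}_{1,k}$, whose equation has no field coupling, a direct time integration of $-\sum_{j_1,j_2}v_{1,j_1}\hat{f}_{k,j_1,j_2}(t)\,\Delta v_1\Delta v_2$ yields ${}_e{\cal E}_{1,k}$; for the coupled pair $(\hat{B}_k,\hat{E}_{2,k})$ I would use the variation-of-constants formula $V(t^{n+1})=e^{\Delta tM}V(t^n)+\int_{t^n}^{t^{n+1}}e^{(t^{n+1}-t)M}R(t)\,dt$, with $M$ the $2\times2$ Maxwell coupling and $R$ the $v_{2,j_2}$-weighted source carried by the $\hat{E}_{2,k}$ equation.

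The main obstacle, just as in Proposition \ref{prop_expo_2dv}, is the explicit evaluation of the scalar oscillatory integrals $\int_{t^n}^{t^{n+1}}e^{\pm ik(t^{n+1}-t)}\,e^{-ikv_{1,j_1}(t-t^n)}\,dt$. These produce the denominators $1\pm v_{1,j_1}$ and $1-v_{1,j_1}^2$, and after regrouping the $e^{\pm ik\Delta t}$ and $e^{-ik\Delta t\,v_{1,j_1}}$ contributions they give precisely the components $\alpha_{j_1}$ and $\beta_{j_1}$ of $\vec\alpha,\vec\beta$, with the common prefactor $i\Delta v_1\Delta v_2 v_{2,j_2}/k$. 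The remaining effort is purely bookkeeping: tracking the signs and the correct row/column placement of each coupling block, and noting that the grid is chosen so that no grazing resonance $v_{1,j_1}=\pm1$ occurs and that the $k=0$ mode is excluded by the Poisson constraint. Assembling the three stages then reproduces the claimed expression for $\exp(L\Delta t)$.
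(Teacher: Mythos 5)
Your proposal is correct and takes essentially the same route as the paper: the identical three-stage decomposition (exact solve of the diagonal $\hat{\bf f}$ block giving $e^{-ik\Delta t\,\mathrm{diag}(\vec{v_1})}$, direct exponentiation of the homogeneous $3\times 3$ Maxwell block yielding the $\cos(k\Delta t)$ and $-i\sin(k\Delta t)$ entries and the trailing $1$, and variation of constants with the oscillatory integrals $\int_{t^n}^{t^{n+1}}e^{\pm ik(t^{n+1}-t)}e^{-ikv_{1,j_1}(t-t^n)}\,dt$ producing the denominators $1\pm v_{1,j_1}$ in $\vec{\alpha},\vec{\beta}$ and the direct integration for ${}_e{\cal E}_{1,k}$). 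Your framing of the whole computation as the specialization of Proposition \ref{prop_expo_2dv} under $A\mapsto -ik$ and $(A+\Pi)^{-1}\mapsto i/k$, together with the explicit caveats $k\neq 0$ and $v_{1,j_1}\neq \pm 1$, is an accurate and slightly cleaner packaging of what the paper carries out by hand.
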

 
 \begin{proof}
To compute $\exp(\Delta tL)$, we will solve exactly the linear part of the Vlasov-Maxwell system. 
 First of all, we observe that the components associated to $\hat{f}$ is diagonal and can be solved independently so that the $N_{v_1}\times N_{v_2}$ top left block of $\exp(\Delta tL)$ is diagonal and is equal 
 to $e^{-ik \Delta t \vec{v_1}}$ ($N_{v_2}$ times). Second, 
 the $3\times 3$ right bottom block corresponds to the homogeneous Maxwell equations and its exponential can be computed as 
 $$
  \exp \left( 
 \begin{array}{lll}
 0& -ik\Delta t & 0\\
 -ik\Delta t & 0 & 0 \\
0 & 0 & 0
 \end{array}
 \right) 
 =
 \left(
 \begin{array}{lll}
 \cos(k \Delta t) & i\sin(k \Delta t)& 0\\
i\sin(k\Delta t)  & \cos(k\Delta t)& 0\\
 0& 0 & 1
 \end{array}
 \right).  
 $$
It remains to compute the three last lines corresponding to the coupling between the Vlasov and Maxwell equations. 
 \paragraph{Computation of ${}_e{\cal E}_{1,k}$: solve $\hat{E}_{1,k}$\\}
First, we have for $\hat{f}_{k,j_1, j_2}(t)$  
$$
\hat{f}_{k,j_1,j_2}(t) = e^{-ik v_{1, j_1} (t-t^n)}\hat{f}_{k,j_1,j_2}(t^{n}), 
$$
which enables to compute $\hat{E}_{1,k}(t^{n+1})$
\begin{eqnarray*}
 \hat{E}_{1,k}(t^{n+1})&=&\hat{E}_{1,k}(t^{n}) -  \sum_{j_1, j_2} \int_{t^n}^{t^{n+1}} e^{-ik v_{1, j_1} (t-t^n)} dtv_{1, j_1}{f}_{k,j_1,j_2}(t^{n}) \Delta v_1 \Delta  v_2\\
 &=& \hat{E}_{1,k}(t^{n}) + \frac{i\Delta v_1 \Delta v_2}{k}\sum_{j_1, j_2}(1-  e^{-ik v_{1, j_1} \Delta t} )\hat{f}_{k,j_1,j_2}(t^{n}),  
\end{eqnarray*}
from which we deduce the expression of ${}_e{\cal E}_{1,k}$. 

 \paragraph{Computation of ${}_e{\cal B}_{k}, {}_e{\cal E}_{2,k}$: solve $\hat{B}_{k}, \hat{E}_{2,k}$\\}
Next, we focus on the calculation of $\hat{E}_{2,k}(t^{n+1})$ and $\hat{B}_{k}(t^{n+1})$. We write down the equations  
\begin{eqnarray*}
\frac{d}{dt} \hat{E}_{2,k}(t)&=& - ik \hat{B}_k(t) - \sum_{j_1, j_2} e^{-ik v_{1, j_1} (t-t^n)} v_{2, j_1}{f}_{k,j_1,j_2}(t^{n}) \Delta v_1 \Delta  v_2 \\
\frac{d}{dt} \hat{B}_k(t)&=&  - ik  \hat{E}_{2,k}(t) 
 \end{eqnarray*}
which can be rewritten as $\frac{dU}{dt} = MU + R$ with $U(t)=(\hat{E}_{2,k}(t), \hat{B}_k(t))$ and 
$$
M=
\left( 
\begin{array}{ll}
0& -ik \\ 
 -ik & 0
\end{array}
\right) \mbox{ and } 
R=
\left( 
\begin{array}{ll}
R_1(t) \\ 
 0
\end{array}
\right), \mbox{ with } R_1(t)=- \sum_{j_1, j_2} e^{-ik v_{1, j_1} (t-t^n)} v_{2, j_1}\hat{f}_{k,j_1,j_2}(t^{n}) \Delta v_1 \Delta  v_2.   
 $$
Thus, one can write the variation of constant formula 
\begin{equation}
\label{duhamel}
U(t^{n+1}) = e^{M\Delta t} U(t^n) + \int_{t^n}^{t^{n+1}} e^{-M(t-t^{n+1})} R(t) dt.  
\end{equation}
First, $e^{M\Delta t}$ reads as 
$$
 e^{M\Delta t} = \exp \left( 
 \begin{array}{ll}
 0 & -ik\Delta t\\
- ik\Delta t & 0
 \end{array}
 \right) 
 =
 \left(
 \begin{array}{ll}
 \cos(kt)& -i\sin(kt)\\
-i\sin(kt)  & \cos(kt)
 \end{array}
 \right).  
$$
Second, one has to compute the integral term in \eqref{duhamel} 
\begin{eqnarray}
\int_{t^n}^{t^{n+1}} e^{-M(t-t^{n+1})} R(t) dt &=& 
 \left(
 \begin{array}{ll}
\int_{t^n}^{t^{n+1}} \cos(k(t-t^{n+1})) R_1(t) dt\\
\int_{t^n}^{t^{n+1}} i \sin(k(t-t^{n+1})) R_1(t)dt \\
  \end{array}
 \right)\nonumber\\
 &=& -
 \left(
  \begin{array}{ll}
  \sum_{j_1, j_2} {\cal I}_1 v_{2, j_1}\hat{f}^{n}_{k,j_1,j_2} \Delta v_1 \Delta  v_2  \\
  \sum_{j_1, j_2} {\cal I}_2 v_{2, j_1}\hat{f}^{n}_{k,j_1,j_2} \Delta v_1 \Delta  v_2 
  \end{array}
 \right)
 \label{expM}
\end{eqnarray} 
where ${\cal I}_1$ and  ${\cal I}_2$ are given by 
\begin{eqnarray*}
{\cal I}_1&=&\int_{t^n}^{t^{n+1}}\Big[ \cos(k(t-t^{n+1}))  e^{-ik v_{1, j_1} (t-t^n)}\Big]dt = \frac{i e^{-ik\Delta t} }{2k(1-v_{1, j_1})} - \frac{i e^{ik\Delta t} }{2k(1+v_{1, j_1})} - \frac{i v_{1, j_1} e^{-ik \Delta t  v_{1, j_1}} }{k(1-v_{1, j_1}^2)}, \nonumber\\
{\cal I}_2&=& \int_{t^n}^{t^{n+1}}\Big[i \sin(k(t-t^{n+1})) e^{-ik v_{1, j_1} (t-t^n)} \Big]dt=\frac{i e^{-ik\Delta t} }{2k(1-v_{1, j_1})} + \frac{i e^{ik\Delta t} }{2k(1+v_{1, j_1})} - \frac{i  e^{-ik \Delta t  v_{1, j_1}} }{k(1-v_{1, j_1}^2)}.
\end{eqnarray*} 
Inserting these calculations in \eqref{duhamel} leads to the following expression for $\hat{E}_{2,k}(t)$
\begin{eqnarray*}
 \hat{E}_{2,k}(t^{n+1})&=&  \cos(k\Delta t) \hat{E}_{2,k}(t^n) - i\sin(k\Delta t) \hat{B}_{k}(t^n) + \frac{i\Delta v_1 \Delta v_2}{k} \sum_{j_1, j_2} v_{2, j_2} \beta_{j_1} \hat{f}_{k,j_1, j_2}(t^n),   \nonumber\\
 \hat{B}_{k}(t^{n+1}) &=&-i\sin(k\Delta t) E_{2,k}(t^n) +\cos(k\Delta t) B_{k}(t^n) + \frac{i\Delta v_1 \Delta v_2}{k} \sum_{j_1, j_2} v_{2, j_2} \alpha_{j_1} \hat{f}_{k,j_1, j_2}(t^n), 
\end{eqnarray*}
where $\vec{\beta}=[\beta_1, \beta_2, \dots, \beta_{N_{v_1}}]\in \mathbb{C}^{N_{v_1}}$ and 
$\vec{\alpha}=[\alpha_1, \alpha_2, \dots, \alpha_{N_{v_1}}]\in \mathbb{C}^{N_{v_1}}$ are given by 
\begin{eqnarray*}
\beta_{j_1}&=& -\frac{e^{-ik\Delta t} }{2(1-v_{1, j_1})} + \frac{e^{ik\Delta t} }{2(1+v_{1, j_1})}+ \frac{v_{1, j_1} e^{-ik \Delta t  v_{1, j_1}} }{(1-v_{1, j_1}^2)}, \nonumber\\
\alpha_{j_1}&=&  -\frac{ e^{-ik\Delta t} }{2(1-v_{1, j_1})} - \frac{ e^{ik\Delta t} }{2(1+v_{1, j_1})} + \frac{ e^{-ik \Delta t  v_{1, j_1}} }{(1-v_{1, j_1}^2)}. 
\end{eqnarray*}

We conclude by writing the vectors ${}_e{\cal B}_{k},{}_e{\cal E}_{2,k}$ corresponding to $\hat{B}_k$ and $\hat{E}_{2,k}$
$$
{}_e{\cal B}_{k,j_2} =  \frac{i\Delta v_1 \Delta v_2}{k} v_{2, j_2} \vec{\alpha}, \;\; {}_e{\cal E}_{2,k,j_2} =  \frac{i\Delta v_1 \Delta v_2}{k} v_{2, j_2} \vec{\beta}. 
$$
\end{proof}

\newpage

\bibliographystyle{abbrv}
\bibliography{exponentialDG}

\begin{thebibliography}{10}

\bibitem{banks2010new}
J.~W. Banks and J.~A.~F. Hittinger.
\newblock A new class of nonlinear finite-volume methods for vlasov simulation.
\newblock {\em IEEE Transactions on Plasma Science}, 38(9):2198--2207, 2010.

\bibitem{banks2019high}
J.~W. Banks, A.~G. Odu, R.~Berger, T.~Chapman, W.~Arrighi, and S.~Brunner.
\newblock High-order accurate conservative finite difference methods for vlasov
  equations in 2d+ 2v.
\newblock {\em SIAM Journal on Scientific Computing}, 41(5):B953--B982, 2019.

\bibitem{biswas1994parallel}
R.~Biswas, K.~D. Devine, and J.~E. Flaherty.
\newblock Parallel, adaptive finite element methods for conservation laws.
\newblock {\em Applied Numerical Mathematics}, 14(1-3):255--283, 1994.

\bibitem{boutin2022modified}
B.~Boutin, A.~Crestetto, N.~Crouseilles, and J.~Massot.
\newblock Modified lawson methods for vlasov equations.
\newblock 2022.

\bibitem{cai2021eulerian}
X.~Cai, J.-M. Qiu, and Y.~Yang.
\newblock An eulerian-lagrangian discontinuous galerkin method for transport
  problems and its application to nonlinear dynamics.
\newblock {\em Journal of Computational Physics}, 439:110392, 2021.

\bibitem{caliari2022mu}
M.~Caliari, F.~Cassini, L.~Einkemmer, A.~Ostermann, and F.~Zivcovich.
\newblock A $\mu$-mode integrator for solving evolution equations in kronecker
  form.
\newblock {\em Journal of Computational Physics}, 455:110989, 2022.

\bibitem{califano1997spatial}
F.~Califano, F.~Pegoraro, and S.~V. Bulanov.
\newblock Spatial structure and time evolution of the weibel instability in
  collisionless inhomogeneous plasmas.
\newblock {\em Physical review E}, 56(1):963, 1997.

\bibitem{celia1990eulerian}
M.~A. Celia, T.~F. Russell, I.~Herrera, and R.~E. Ewing.
\newblock An eulerian-lagrangian localized adjoint method for the
  advection-diffusion equation.
\newblock {\em Advances in water resources}, 13(4):187--206, 1990.

\bibitem{cheng2014discontinuous}
Y.~Cheng, I.~M. Gamba, F.~Li, and P.~J. Morrison.
\newblock Discontinuous galerkin methods for the vlasov--maxwell equations.
\newblock {\em SIAM Journal on Numerical Analysis}, 52(2):1017--1049, 2014.

\bibitem{ciarlet2002finite}
P.~G. Ciarlet.
\newblock {\em The finite element method for elliptic problems}.
\newblock SIAM, 2002.

\bibitem{cockburn2001runge}
B.~Cockburn and C.-W. Shu.
\newblock Runge--kutta discontinuous galerkin methods for convection-dominated
  problems.
\newblock {\em Journal of scientific computing}, 16:173--261, 2001.

\bibitem{crestetto2022comparison}
A.~Crestetto, N.~Crouseilles, Y.~Li, and J.~Massot.
\newblock Comparison of high-order eulerian methods for electron hybrid model.
\newblock {\em Journal of Computational Physics}, 451:110857, 2022.

\bibitem{croci2023exploiting}
M.~Croci and J.~Mu{\~n}oz-Matute.
\newblock Exploiting kronecker structure in exponential integrators: Fast
  approximation of the action of $\varphi$-functions of matrices via
  quadrature.
\newblock {\em Journal of Computational Science}, 67:101966, 2023.

\bibitem{crouseilles2020exponential}
N.~Crouseilles, L.~Einkemmer, and J.~Massot.
\newblock Exponential methods for solving hyperbolic problems with application
  to collisionless kinetic equations.
\newblock {\em Journal of Computational Physics}, 420:109688, 2020.

\bibitem{einkemmer2019performance}
L.~Einkemmer.
\newblock A performance comparison of semi-lagrangian discontinuous galerkin
  and spline based vlasov solvers in four dimensions.
\newblock {\em Journal of Computational Physics}, 376:937--951, 2019.

\bibitem{filbet2003comparison}
F.~Filbet and E.~Sonnendr{\"u}cker.
\newblock Comparison of eulerian vlasov solvers.
\newblock {\em Computer Physics Communications}, 150(3):247--266, 2003.

\bibitem{hochbruck2010exponential}
M.~Hochbruck and A.~Ostermann.
\newblock Exponential integrators.
\newblock {\em Acta Numerica}, 19:209--286, 2010.

\bibitem{hong2022generalized}
X.~Hong and J.-M. Qiu.
\newblock A generalized eulerian-lagrangian discontinuous galerkin method for
  transport problems.
\newblock {\em Journal of Computational Physics}, 464:111160, 2022.

\bibitem{johnson1986analysis}
C.~Johnson and J.~Pitk{\"a}ranta.
\newblock An analysis of the discontinuous galerkin method for a scalar
  hyperbolic equation.
\newblock {\em Mathematics of computation}, 46(173):1--26, 1986.

\bibitem{klimas1994splitting}
A.~J. Klimas and W.~M. Farrell.
\newblock A splitting algorithm for vlasov simulation with filamentation
  filtration.
\newblock {\em Journal of computational physics}, 110(1):150--163, 1994.

\bibitem{kraus2017gempic}
M.~Kraus, K.~Kormann, P.~J. Morrison, and E.~Sonnendr{\"u}cker.
\newblock Gempic: geometric electromagnetic particle-in-cell methods.
\newblock {\em Journal of Plasma Physics}, 83(4):905830401, 2017.

\bibitem{lesaint1974finite}
P.~Lesaint and P.-A. Raviart.
\newblock On a finite element method for solving the neutron transport
  equation.
\newblock {\em Publications math{\'e}matiques et informatique de Rennes},
  (S4):1--40, 1974.

\bibitem{liu_shu_DG_convergence}
Y.~Liu, C.-W. Shu, and M.~Zhang.
\newblock Sub-optimal convergence of discontinuous galerkin methods with
  central fluxes for linear hyperbolic equations with even degree polynomial
  approximations.
\newblock {\em Journal of Computational Mathematics}, 39:629--648, 2021.

\bibitem{munoz2022exploiting}
J.~Mu{\~n}oz-Matute, D.~Pardo, and V.~M. Calo.
\newblock Exploiting the kronecker product structure of $\varphi$- functions in
  exponential integrators.
\newblock {\em International Journal for Numerical Methods in Engineering},
  123(9):2142--2161, 2022.

\bibitem{peterson1991note}
T.~E. Peterson.
\newblock A note on the convergence of the discontinuous galerkin method for a
  scalar hyperbolic equation.
\newblock {\em SIAM Journal on Numerical Analysis}, 28(1):133--140, 1991.

\bibitem{qiu2010conservative}
J.-M. Qiu and A.~Christlieb.
\newblock A conservative high order semi-lagrangian weno method for the vlasov
  equation.
\newblock {\em Journal of Computational Physics}, 229(4):1130--1149, 2010.

\bibitem{richter1988optimal}
G.~R. Richter.
\newblock An optimal-order error estimate for the discontinuous galerkin
  method.
\newblock {\em Mathematics of Computation}, 50(181):75--88, 1988.

\bibitem{rossmanith2011positivity}
J.~A. Rossmanith and D.~C. Seal.
\newblock A positivity-preserving high-order semi-lagrangian discontinuous
  galerkin scheme for the vlasov--poisson equations.
\newblock {\em Journal of Computational Physics}, 230(16):6203--6232, 2011.

\bibitem{tee2007eigenvectors}
G.~J. Tee.
\newblock Eigenvectors of block circulant and alternating circulant matrices.
\newblock {\em New Zealand Journal of Mathematics}, 36(8):195--211, 2007.

\bibitem{verboncoeur2005particle}
J.~P. Verboncoeur.
\newblock Particle simulation of plasmas: review and advances.
\newblock {\em Plasma Physics and Controlled Fusion}, 47(5A):A231, 2005.

\bibitem{wang1999family}
H.~Wang, R.~E. Ewing, G.~Qin, S.~L. Lyons, M.~Al-Lawatia, and S.~Man.
\newblock A family of eulerian--lagrangian localized adjoint methods for
  multi-dimensional advection-reaction equations.
\newblock {\em Journal of Computational Physics}, 152(1):120--163, 1999.

\bibitem{weibel1959spontaneously}
E.~S. Weibel.
\newblock Spontaneously growing transverse waves in a plasma due to an
  anisotropic velocity distribution.
\newblock {\em Physical Review Letters}, 2(3):83, 1959.

\bibitem{zhang2004error}
Q.~Zhang and C.-W. Shu.
\newblock Error estimates to smooth solutions of runge--kutta discontinuous
  galerkin methods for scalar conservation laws.
\newblock {\em SIAM Journal on Numerical Analysis}, 42(2):641--666, 2004.

\bibitem{zhang2010stability}
Q.~Zhang and C.-W. Shu.
\newblock Stability analysis and a priori error estimates of the third order
  explicit runge--kutta discontinuous galerkin method for scalar conservation
  laws.
\newblock {\em SIAM Journal on Numerical Analysis}, 48(3):1038--1063, 2010.

\end{thebibliography}

\end{document}